\documentclass{article}
\usepackage{amsthm,amsmath,amssymb,amscd,bm,graphicx,color,hyperref}

\usepackage{mycommands}

\begin{document}

\title{Floer cohomology of immersed Lagrangian spheres in smoothings of $A_N$ surfaces}
\author{Garrett Alston\footnote{The Institute of Mathematical Sciences and Department of Mathematics, The Chinese University of Hong Kong; Email: \url{galston@math.cuhk.edu.hk}\newline Partially supported by grant NSF-DMS 1108397.}}
\date{}
%\email{test}

%\tableofcontents

\maketitle
\begin{abstract}
  We calculate the self-Floer cohomology with $\zz_2$ coefficients of some immersed Lagrangian spheres in the affine symplectic manifolds
  \begin{displaymath}
    \sett{\za\zb-(\zc-1)(\zc-2)\cdots(\zc-N)=0}\subset\cc^3.
  \end{displaymath}
  The immersed spheres are exact and graded.
  Moreover, they satisfy a positivity assumption that allows us to apply the methods of \cite{alston-fciegl} to calculate the Floer cohomology as follows.
  Given auxiliary data a Morse function on $S^2$ and a time-dependent almost complex structure, the Floer cochain complex is the Morse complex plus two generators for each self-intersection point of the Lagrangian sphere.
  The Floer differential is defined by counting combinations of Morse flow lines and holomorphic strips.
  Using a Lefschetz fibration allows us to explicitly calculate all holomorphic strips and describe the Floer differential.
  For most of the immersed spheres the Floer differential is zero (with $\zz_2$-coefficients).
\end{abstract}

\section{Introduction}
\label{sec:introduction}
Exact Lagrangian submanifolds of exact symplectic manifolds are an important class of objects because they often say interesting things about the symplectic manifold.
For example, the trivial symplectic manifold $(\rr^{2n},\omega_0)$ does not have any exact Lagrangian submanifolds.
As another example, the existence of an exact Lagrangian submanifold in a Liouville manifold $M$ implies the non-vanishing of the symplectic cohomology of $M$.
Symplectic cohomology in turn has many applications, such as to the existence of Reeb orbits on the boundary of $M$.

One way to generalize the study of exact Lagrangian submanifolds is to consider \textit{immersed} exact Lagrangians.
These objects should also have interesting applications to symplectic geometry.
In this paper we study the Floer cohomology of a family of immersed Lagrangian spheres in the smoothing of an $A_n$ surface.

We now introduce the objects we will study and state the main theorems of the paper.
Let
\begin{displaymath}
  \poly = \za\zb-(\zc-1)(\zc-2)\cdots(\zc-N)
\end{displaymath}
and
\begin{displaymath}
  \mnfld=\sett{\poly=0}\subset\cc^3.
\end{displaymath}
We equip $\mnfld$ with the standard symplectic form $\sympl$, standard complex structure $\acs$, standard one form $\oneform$ with $d\oneform=\omega$, and standard holomorphic volume form $\holvf = \mathrm{Res}(d \za \wedge d\zb \wedge d\zc/\poly)$.
All of this structure makes $\mnfld$ an exact graded K\"ahler manifold.

The functions $\hamone=\frac{1}{2}|\za|^2-\frac{1}{2}|\zb|^2$, $\hamtwo=|\zc|^2$ define a singular Lagrangian torus fibration on $\mnfld$.
The fibers which contain focus-focus singularities are immersed Lagrangian spheres.
These are precisely the fibers where $\hamone=0$ and $\hamtwo\in\sett{1,\ldots,N}$; we denote these fibers by $\lagstd$.
The main result of this paper is a calculation of the Floer cohomology of these immersed spheres.
The result is
\begin{theorem}[Theorem \ref{thm:1}]
  \label{thm:3}
  The self-Floer cohomology (with $\zz_2$-coefficients) of $\lagstd$ is $0$ if $r=1$, and isomorphic to $(\zz_2)^4$ if $r>1$.
  More precisely, in the latter case the Floer cohomology has dimension one in degrees $-1,0,2,3$ and is $0$ elsewhere.
\end{theorem}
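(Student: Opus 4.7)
The plan is to implement the Morse--Bott framework of \cite{alston-fciegl} using the Lefschetz fibration $\pi:\mnfld\to\cc$, $(a,b,c)\mapsto c$, whose critical values are $1,\ldots,N$ and whose smooth fibers are cylinders $\cc^*$. First I would describe $\lagstd$ through $\pi$: the image $\pi(\lagstd)$ is a loop $\gamma_r\subset\cc$ that passes through the single critical value $c=r$, and over each smooth point of $\gamma_r$ the Lagrangian restricts to the ``equator'' circle $|a|=|b|$ in $\pi^{-1}(c)\cong\cc^*$; this circle collapses to a node as $\gamma_r$ passes through $c=r$, producing the focus--focus self-intersection $p_r$. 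Thus $\lagstd$ is an immersed $S^2$ with one transverse double point. Choosing a Morse function on $S^2$ with one minimum $m$ and one maximum $M$, the recipe of \cite{alston-fciegl} yields a Floer cochain complex $CF^*(\lagstd,\lagstd)=\zz_2\langle m,M,\overline{p}_r,\underline{p}_r\rangle$ of total dimension four, and a local Maslov--Viterbo computation at $p_r$ using the holomorphic volume form $\holvf$ should fix the gradings as $|m|=0$, $|M|=2$, and $\{|\overline{p}_r|,|\underline{p}_r|\}=\{-1,3\}$.

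The main step is to compute the differential by enumerating the pseudo-holomorphic strips. Any strip $u:\rr\times[0,1]\to\mnfld$ with boundary on $\lagstd$ projects via $\pi$ to a holomorphic map with boundary on $\gamma_r$, so by the open mapping theorem either $\pi\circ u$ is constant---in which case $u$ lives in a single fiber $\cc^*$ with boundary on a circle and can be enumerated explicitly---or $\pi\circ u$ sweeps out a region of $\cc$ bounded by $\gamma_r$. The structure of the fibration reduces the second case to counting sections of $\pi$ over subregions of $\cc$ bounded by arcs of $\gamma_r$, a tractable two-dimensional problem. The crucial distinction is which critical values lie inside $\gamma_r$: for $r=1$ the enclosed disk contains none, while for $r>1$ it contains $c=1,\ldots,r-1$. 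For $r>1$ I would expect the rigid strips to pair off---for example under the holomorphic involution $(a,b,c)\mapsto(b,a,c)$, which preserves $\mnfld$ and $\lagstd$---so the differential vanishes modulo $2$ and $HF^*(\lagstd,\lagstd)$ equals the cochain complex, of rank four in the asserted degrees $-1,0,2,3$. For $r=1$, the unobstructed disk $\{|c|\le 1\}$ carries a distinguished holomorphic section with boundary on $\lagstd$, contributing a rigid strip whose count realizes a non-trivial differential and renders the complex acyclic.

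The principal obstacle will be the complete and rigorous enumeration of the holomorphic strips, including strips with nontrivial winding in the $c$-plane and strips that approach the self-intersection $p_r$. A maximum-principle argument together with an index count should control the possible topologies of $\pi\circ u$, but care is required in the Morse--Bott limit where strips can break into configurations of holomorphic curves and gradient segments on $S^2$. Once the classification is complete, the rest of the argument reduces to bookkeeping, and the topological asymmetry between $r=1$ and $r>1$ should emerge naturally from the position of $\gamma_r$ relative to the other critical values in $\cc$.
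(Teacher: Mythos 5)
Your overall strategy coincides with the paper's: use the Lefschetz fibration $\leffib$, note that the complex has exactly four generators in degrees $-1,0,2,3$ so that only $\floerdiff((\p,\q))$ and $\floerdiff(\pmax)$ can be nonzero, and reduce the strip counts to counting holomorphic sections of $\leffib$ over the disc bounded by $\pi(\lagstd)$. The paper carries this out by writing every such strip explicitly as $u=(f,g,rh)$ with $h$ a Blaschke product and $fg=(rh-1)\cdots(rh-N)$, and observing that for each $j<r$ (i.e.\ each critical value enclosed by $\pi(\lagstd)$) the factor $(rh-j)$ can be split between $f$ and $g$ in exactly two ways; this yields $2^{r-1}$ components of the moduli space, each an $S^1\times\rr$ mapping diffeomorphically under $\ev_{-\infty}$ onto the cylindrical chart, hence exactly $2^{r-1}$ rigid trajectories. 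The count is $1$ for $r=1$ (acyclic complex) and even for $r>1$ (zero differential).

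The gap in your proposal is precisely at this decisive counting step, which you defer. Two specific points. First, for $r>1$ your parity argument via the symplectic involution $\sigma(\za,\zb,\zc)=(\zb,\za,\zc)$ is an attractive alternative to the explicit count, and $\sigma$ does preserve $\lagstd$, fixes $\p,\q,\pmin,\pmax$, and preserves branch-jump types; but the argument stands or falls on $\sigma$ acting \emph{freely} on the zero-dimensional spaces $\connorbit(\pmin,(\p,\q))$ and $\connorbit((\q,\p),\pmax)$. A $\sigma$-invariant rigid strip would contribute $1$ mod $2$ and destroy the conclusion. Ruling such strips out amounts to showing that no rigid $u=(f,g,rh)$ has $f$ and $g$ agreeing up to a unit constant, which in practice requires the same factorization analysis the paper performs (there it is visible because $\xi_j=\eta_j$ would force $\xi_j^2=(rh-j)/(jh-r)$, impossible since the right side has a simple interior zero). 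Second, for $r=1$ you assert ``a distinguished holomorphic section''; in fact there is an $S^1\times\rr$ family of strips (rotation by $e^{i\theta}$ in the fiber direction and the position of the $-\infty$ puncture), and one must prove that $\ev_{-\infty}$ restricted to this family hits $\pmin$ exactly once. Neither point is fatal --- both follow from the classification in Proposition \ref{prop:4} --- but as written the proposal establishes the structure of the complex and the reduction to a section count without actually producing the count, which is the entire content of the theorem.
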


More generally, we can calculate the Floer cohomology of many other immersed Lagrangian spheres.
To describe these Lagrangians, note that the map
\begin{displaymath}
  \leffib:\mnfld\to\cc,\quad (\za,\zb,\zc)\mapsto\zc
\end{displaymath}
is a Lefschetz fibration.
If $\gamma$ is an embedded loop in the base that passes through exactly one critical value, then the union of vanishing cycles over $\gamma$ forms an immersed sphere which we denote as $\Sigma_\gamma$.
The result is
\begin{theorem}[Theorem \ref{thm:2}]
  \label{thm:4}
  If the interior of $\gamma$ contains no critical values of $\leffib$, then the Floer cohomology of $\Sigma_\gamma$ is trivial.
  Otherwise, the Floer cohomology of $\Sigma_\gamma$ is isomorphic to $(\zz_2)^4$.
  More precisely, it has dimension $1$ in degrees $-1,0,2,3$.
\end{theorem}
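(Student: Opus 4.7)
Following the strategy of \cite{alston-fciegl} as applied in the proof of Theorem \ref{thm:3}, the plan is to set up the Floer cochain complex for $\Sigma_\gamma$ explicitly and compute the differential by exploiting the Lefschetz fibration $\leffib$.

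First I would verify the setup: $\Sigma_\gamma$ is an exact graded immersed Lagrangian sphere satisfying the positivity hypothesis of \cite{alston-fciegl}, with a single transverse self-intersection at the critical point of $\leffib$ above the critical value on $\gamma$. (Exactness is automatic from simple connectedness of $S^2$; the Maslov and positivity data come from the standard focus-focus model.) Setting up the Floer complex per the prescribed recipe produces two Morse generators in degrees $0$ and $2$ together with two self-intersection generators whose Maslov degrees sum to $n=2$ and are forced by the focus-focus model to be $-1$ and $3$. Hence the cochain complex is four-dimensional over $\zz_2$ and concentrated in degrees $-1,0,2,3$, so Theorem \ref{thm:4} reduces to determining whether the Floer differential vanishes.

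To analyze the differential I would project holomorphic strips in $\mnfld$ with boundary on $\Sigma_\gamma$ via $\leffib$ to holomorphic maps in $\cc$ with boundary on $\gamma$. The maximum principle confines each base map to the bounded region $U$ enclosed by $\gamma$. I would then classify these base maps by how they interact with the critical values of $\leffib$ lying in $\overline{U}$, and lift them back to $\mnfld$ using the explicit form of the parallel transport along the vanishing thimbles. This gives a complete enumeration of the strip moduli spaces that can contribute to any component of the differential.

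The dichotomy of the theorem should emerge from this enumeration. When $U$ contains no critical values of $\leffib$, the only rigid strips are certain small disks localized at the focus-focus node; these contribute an odd count between each pair of generators of adjacent degree, pairing up all four generators and collapsing the cohomology to zero. When $U$ contains at least one critical value, the vanishing thimbles lying over the interior critical values either obstruct the relevant rigid strips or partition them into canceling pairs, so that every component of the differential vanishes mod $2$ and the cohomology equals the full cochain complex $\zz_2^4$ in degrees $-1,0,2,3$. The principal obstacle is precisely this strip enumeration together with the cancellation analysis in the presence of interior critical values, which requires careful moduli-space, compactness, and gluing arguments along the lines of \cite{alston-fciegl}.
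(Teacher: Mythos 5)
Your reduction of the theorem to the parity of two strip counts is the right skeleton and matches the paper: the complex has one generator in each of degrees $-1,0,2,3$, the differential can only send $(\p,\q)$ to $\pmin$ and $\pmax$ to $(\q,\p)$, and everything hinges on whether those counts are odd or even. But the crucial step --- the actual count --- is missing, and the heuristics you substitute for it are partly wrong. The paper's Lemma \ref{lemma:6} shows that both counts equal $2^C$, where $C$ is the number of critical values of $\leffib$ enclosed by $\gamma$. The mechanism is the explicit classification of Proposition \ref{prop:4}: writing a strip as $u=(f,g,h)$ with $h=\leffib\circ u$, the boundary and branch-jump conditions force $h$ to be (a M\"obius reparameterization of) a degree-one Blaschke product onto the region bounded by $\gamma$, and the equation $fg=(h-1)\cdots(h-N)$ must then be solved by distributing factors between $f$ and $g$; each factor $h-j$ with $j$ \emph{inside} $\gamma$ admits exactly two admissible factorizations, while the factors with $j$ outside or on $\gamma$ admit only one. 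This yields $2^C$ components of the moduli space, each contributing exactly one rigid configuration after imposing the incidence condition with $\pmin$ or $\pmax$. So when $C=0$ the count is $1$ (odd, differential nonzero, cohomology vanishes), and when $C\geq 1$ it is even and the differential vanishes mod $2$.

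Two specific points in your sketch conflict with this. First, in the $C=0$ case the unique rigid strip is not a ``small disk localized at the focus-focus node'': its projection $h$ covers the entire region bounded by $\gamma$ once. Second, in the $C\geq1$ case you offer a disjunction (``either obstruct the relevant rigid strips or partition them into canceling pairs'') without deciding which holds or why; the thimbles over interior critical values do not obstruct anything --- the strips persist, and each interior critical value exactly doubles their number. Without the factorization argument there is no way to conclude evenness, so this is the genuine gap. A smaller issue: the degrees $-1$ and $3$ of the self-intersection generators are not forced by the local focus-focus model alone (only their sum is); the paper derives them from the global phase computation of Lemma \ref{lemma:5} together with a deformation of $\gamma$ to the circular case.
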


Floer cohomology for immersed Lagrangians was developed by Akaho and Joyce in \cite{MR2785840}.
In their setup, they construct a filtered $A_\infty$-algebra associated to a compact immersed Lagrangian $\iota:L\to M$ with transverse self-intersection points (here $L$ is a smooth manifold, $\iota(L)$ is Lagrangian, and $M$ is a symplectic manifold).
The underlying vector space (over a Novikov ring $\Lambda$) for the $A_\infty$-algebra can be thought of as $H^*(L;\Lambda)\oplus \Lambda R$ where $R$ is a finite set that has two elements for each self-intersection point of $\iota$.
If $b$ is a solution of the Maurer-Cartan equation for the $A_\infty$-algebra then the Floer cohomology of $(\iota,b)$ is defined.

The full $A_\infty$-algebra of a Lagrangian is difficult to calculate, and hence it is difficult to compute Floer cohomology from this definition.
We therefore use a different definition of Floer cohomology in this paper.
We give a brief overview here; see Section \ref{sec:floer-cohom-lagstd} for more details and \cite{alston-fciegl} for complete details.
First, for the underlying chain complex we will take
\begin{displaymath}
  \cm(f;\zz_2)\oplus\zz_2R
\end{displaymath}
where $R$ is as above, $f$ is a Morse function on $L$, and $\cm(f;\zz_2)$ is the Morse complex.
Motivated by \cite{MR2555932} and \cite{MR2863919}, we define the Floer differential by counting pearly trajectories, which are strings of Morse flow lines and holomorphic discs.
In our formulation we use holomorphic strips instead of discs, and allow time dependent almost complex structures.
The $\pm\infty$ ends of the strips converge to either branch points\footnote{I.e.\ self-intersection points of the immersed Lagrangian.} of the immersed Lagrangian or smooth points (non-branch points).
Exactness precludes existence of strips with no branch jumps, hence strips will only appear in pearly trajectories that start or stop on generators coming from $R$.
Thus, our pearly trajectories will consist of one (maybe partial) Morse flow line and at most two discs, each of which has a branch jump.
See Figure \ref{fig:trajectories}.
\begin{figure}[htbp]
  \label{fig:trajectories}
  \centering
  \input{./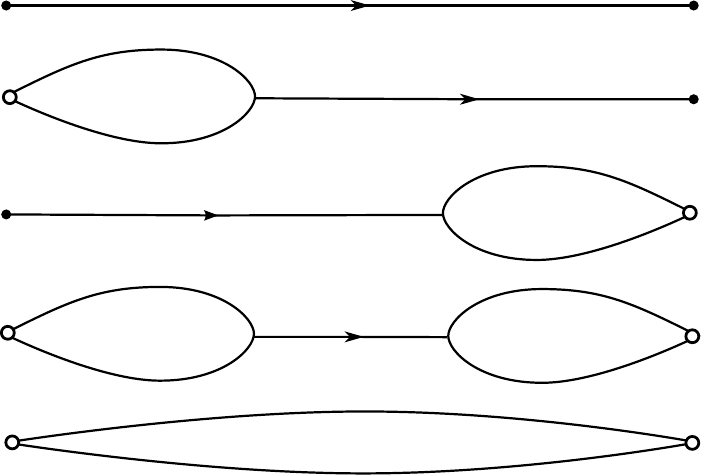_tex}
  \caption{The five types of pearly trajectories. Lines are (maybe partial) Morse flow lines. Lines connect to the $\pm\infty$ ends of the strips. Closed dots at the ends of trajectories are critical points. Open circles represent branch jumps. Branch jumps correspond to elements of $R$. All other points on the boundary of the strips are smooth. The positivity assumption implies that the fourth type actually does not contribute for dimension reasons (we show it for completeness).} 
\end{figure}

See Theorem \ref{thm:5} for a precise statement of what is proved in \cite{alston-fciegl} and Section 7.1 for the precise definition of Floer cohomology.
Note that the positivity assumption in Theorem \ref{thm:5} can be thought of as an unobstructedness assumption for the immersed Lagrangian---it is needed to rule out problems arising from disc bubbling.
The definition of Floer cohomology used in this paper agrees with the more standard definition---namely, taking two copies of the Lagrangian, Hamiltonian perturbing one, and then counting strips connecting intersection points.

In Sections \ref{sec:discs-strips}, \ref{sec:analytic-setup} and \ref{sec:moduli-spac-holom} we explain the construction of the moduli spaces of strips and discs.
The strips are used to define the pearly trajectories and the discs are used to help model possible degenerations.
A time dependent complex structure $\depj{t}{t}$ is generic (and hence is a valid choice to define the Floer cohomology) if all the moduli spaces of strips with all possible boundary punctures are regular and tranverse to the stable and unstable manifolds of the Morse function.
In Section \ref{sec:class-holom-curv} we classify all strips with boundaries on the Lagrangians $\lagstd$ with respect to the standard time independent complex structure.
This classification follows relatively easily from Lefschetz fibration considerations; compare for example to the calculation in Example 17.3 of \cite{seidel-fcpclt}.
We then show that all these moduli spaces are regular, hence can be used to calculate Floer cohomology as defined above.
The main lemma that is used to calculate Floer cohomology is Lemma \ref{lemma:1}, which in turn is based on the classification in Section \ref{sec:class-holom-curv}.

We end this introduction with some remarks on possible further work.
First, it would be interesting to find the product structure and full $A_\infty$-structure on the Lagrangians.
We conjecture that the only non-trivial products, other than the products involving the unit, are
\begin{displaymath}
  [(\q,\p)]\cdot[(\p,\q)]=[(\p,\q)]\cdot [(\q,\p)] = [\pmax].
\end{displaymath}
Here, $[(\p,\q)], [(\p,\q)], [\pmax]$ are generators of the Floer cohomology in degrees $-1,3,2$ respectively.
See Section \ref{sec:calc-floer-cohom} for a full explanation of the notation.
The product can likely be defined by counting a combination of holomorphic discs and Morse flow lines, as in \cite{MR2555932} or \cite{MR2863919}.
It seems that the only thing that contributes to the product in this example is a constant disc (with branch jumps\footnote{That is, the map $\ell$ in Definition \ref{dfn:9} takes different values on the components of the boundary of the disc minus the marked points.}) connected to  a Morse flow line emanating from $\pmax$.
However, we have not yet proved that this correctly computes the product.

Second, the immersed Lagrangians can be constructed from embedded Lagrangian spheres using the construction described in Section (16h) of \cite{seidel-fcpclt}.
In the terminology of Lefschetz fibrations (see Section \ref{sec:geom-lefsch-fibr}), the immersed spheres are matching cycles over loops in the base that start and stop at a critical value of the fibration.
Such loops can be obtained as perturbations of concatenations of two paths connecting two critical values of the fibration.
See Figure \ref{fig:curves}.
The matching cycles over the paths are embedded spheres that intersect over the critical values.
Doing appropriate Lagrangian surgery on the spheres at one of the intersection points will result in the immersed sphere.
It would be interesting to explicitly describe the relationship between all of these objects in the twisted Fukaya category.
\begin{figure}[htbp]
  \label{fig:curves}
  \centering
  \input{./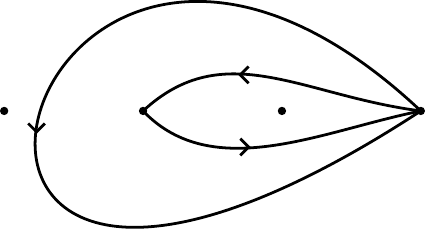_tex}
  \caption{Two paths and a loop in the base of the Lefschetz fibration. The dots represent critical values of the fibration.}
\end{figure}

\section{Immersed Lagrangian spheres}
\label{sec:torus-fibration}

In this section we define the Lagrangian spheres $\lagstd$ that we will study, and explain how to see them from the point of view of Lagrangian torus fibrations and Lefschetz fibrations.
Later on when we classify holomorphic discs the Lefschetz fibration point of view will be very useful.

The self-intersection point of $\lagstd$ has a special role to play in immersed Floer theory and we fix some notation regarding it.
We also write down explicit coordinates on $\lagstd$, as well as fix a grading and discuss exactness of $\lagstd$.
Our grading conventions agree with those in \cite{seidel-fcpclt}.

\subsection{Torus fibration perspective}
Consider the functions
\begin{eqnarray}
  \label{eq:2}
\nonumber  \hamone:\mnfld\to\rr,\quad& (\za,\zb,\zc)\mapsto \frac{1}{2}|\za|^2-\frac{1}{2}|\zb|^2,\\
  \hamtwo:\mnfld\to\rr,\quad & (\za,\zb,\zc)\mapsto |\zc|^2.
\end{eqnarray}
\begin{lemma}
  $\hamone$ and $\hamtwo$ Poisson commute, that is
  \begin{displaymath}
    \{\hamone,\hamtwo\}=0.    
  \end{displaymath}
\end{lemma}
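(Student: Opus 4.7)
The plan is to work in the ambient manifold $\cc^3$ and exploit the fact that $\hamone$ and $\hamtwo$ depend on disjoint sets of complex coordinates. Let $\tilde\hamone$ and $\tilde\hamtwo$ denote the functions on $\cc^3$ defined by the same formulas as $\hamone$ and $\hamtwo$, so that $\hamone=\tilde\hamone|_\mnfld$ and $\hamtwo=\tilde\hamtwo|_\mnfld$. The symplectic form on $\mnfld$ is the restriction of the standard form on $\cc^3$, so whenever the ambient Hamiltonian vector field of a function happens to be tangent to $\mnfld$, it agrees along $\mnfld$ with the Hamiltonian vector field computed intrinsically on $\mnfld$.

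First I would identify $X_{\hamone}$ via the ambient computation. A direct calculation shows that $X_{\tilde\hamone}$ on $\cc^3$ generates the $U(1)$-action
\begin{displaymath}
  (\za,\zb,\zc)\longmapsto (e^{-it}\za,\,e^{it}\zb,\,\zc).
\end{displaymath}
This action preserves the product $\za\zb$ and fixes $\zc$, so it preserves $\poly$, and hence preserves $\mnfld$. Therefore $X_{\tilde\hamone}$ is tangent to $\mnfld$, and by the remark above $X_{\hamone}=X_{\tilde\hamone}|_\mnfld$.

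With that identification in hand, the bracket is a one-liner:
\begin{displaymath}
  \{\hamone,\hamtwo\} = X_{\hamone}(\hamtwo) = \bigl(X_{\tilde\hamone}(\tilde\hamtwo)\bigr)\big|_\mnfld,
\end{displaymath}
and since $X_{\tilde\hamone}$ differentiates only in the $\za,\zb$ directions while $\tilde\hamtwo=|\zc|^2$ depends only on $\zc$, the right side vanishes identically. The only nontrivial point in the argument is the tangency step; note that the analogous $U(1)$-action generated by $X_{\tilde\hamtwo}$ rotates $\zc$ and does \emph{not} preserve $\mnfld$, so a symmetric argument on the other slot would fail. This asymmetry is irrelevant, however, because the Poisson bracket is antisymmetric and we only need to run the argument on one side.
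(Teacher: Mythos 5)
Your proof is correct and follows essentially the same route as the paper: identify $X_{\hamone}$ as the restriction of the ambient vector field $-i\za\partial_{\za}+i\zb\partial_{\zb}$ and observe that it annihilates $|\zc|^2$. The only difference is that you spell out the tangency of the ambient Hamiltonian vector field to $\mnfld$ (via the $U(1)$-action preserving $\poly$), which the paper compresses into ``it is easy to check.''
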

\begin{proof}
  It is easy to check that the Hamiltonian vector field\footnote{Our convention is that $\sympl(\hamvf{\hamone},\cdot)=d\hamone(\cdot)$.}
 of $\hamone$ is
  \begin{displaymath}
    \hamvf{\hamone}=-i\za\pd{}{\za}+i\zb\pd{}{\zb}.
  \end{displaymath}
  Thus
  \begin{displaymath}
    \{\hamone,\hamtwo\}=-d\hamtwo(\hamvf{\hamone})=\hamvf{\hamone}(\hamtwo)=0.
  \end{displaymath}
\end{proof}

\begin{corollary}
  The map
  \begin{displaymath}
    (\hamone,\hamtwo):\mnfld \to \base:=\rr\times\rr_{\geq0}
  \end{displaymath}
  is a Lagrangian torus fibration (with singularities).
  The singular locus is the subset
  \begin{displaymath}
    \rr\times\sett{0}\cup\sett{(0,1^2),\ldots,(0,N^2)}\subset B.
  \end{displaymath}
  The fibers over $(0,1^2),\ldots,(0,N^2)$ are focus-focus fibers; hence they are immersed Lagrangian spheres.
  Topologically, they can also be viewed as tori with a cycle collapsed to a point.
\end{corollary}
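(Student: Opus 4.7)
My plan is to prove the three assertions of the corollary separately: that $(\hamone,\hamtwo)$ is a Lagrangian submersion outside the stated singular set, that regular fibers are tori, and that the fibers over $(0,k^2)$ are focus--focus immersed spheres. The Lagrangian property is immediate from the preceding lemma: Poisson commutativity means the smooth fibers are isotropic, and since $\dim_\rr\mnfld=4$, any $2$-dimensional isotropic submanifold is automatically Lagrangian. So the real work is to identify the critical locus of $(\hamone,\hamtwo)$ on $\mnfld$ and to describe the fiber topology.

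For the critical locus I would compute as follows. The form $d\hamtwo=\bar\zc\,d\zc+\zc\,d\bar\zc$ vanishes on $\mnfld$ precisely when $\zc=0$, accounting for the boundary component $\rr\times\{0\}$ of the singular set. Away from $\zc=0$, on the chart $\{\za\neq 0\}$ I would use $(\za,\zc)$ as coordinates on $\mnfld$ (with $\zb=(\zc-1)\cdots(\zc-N)/\za$) and check by a short calculation that $\partial_\za\hamone=\bar\za/2+|(\zc-1)\cdots(\zc-N)|^2/(2\za^2\bar\za)$, which cannot vanish because that would force $|\za|^4=-|(\zc-1)\cdots(\zc-N)|^2<0$. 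Hence $d\hamone$ has a nonzero $d\za$-component while $d\hamtwo$ lies in the $\zc$-direction, so the two are independent. A symmetric argument in the chart $\{\zb\neq 0\}$ covers the remaining smooth points. The only points where $\za=\zb=0$ are $(0,0,k)$ for $k=1,\ldots,N$; at each of these $T\mnfld=\ker d\poly=\{d\zc=0\}$ because $\partial_\zc\poly(0,0,k)\neq 0$, so both $d\hamone$ and $d\hamtwo$ restrict to zero. These $N$ points map to the singular values $(0,k^2)\in\base$.

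For the fiber topology I would parameterize explicitly. Fixing $(b_1,b_2)$, the equations $|\za|^2-|\zb|^2=2b_1$ together with $|\za||\zb|=|(\zc-1)\cdots(\zc-N)|$ (from the defining equation) determine $|\za|$ and $|\zb|$ uniquely from $\zc\in S^1_{\sqrt{b_2}}$; then $\arg\za$ is free and $\arg\zb$ is pinned down by $\za\zb=(\zc-1)\cdots(\zc-N)$. When $b_2\neq k^2$ the right-hand side is nowhere zero on $S^1_{\sqrt{b_2}}$, so the fiber is a smooth $T^2$ parameterized by $(\arg\za,\zc)$. When $(b_1,b_2)=(0,k^2)$ the product vanishes precisely at $\zc=k\in S^1_k$, where $|\za|=|\zb|=0$ collapses the $\arg\za$-circle to the single point $(0,0,k)$; topologically this is a torus with one meridian crushed to a point, i.e., a sphere with two points identified, realized as an immersion $S^2\to\mnfld$ with one transverse self-intersection. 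The step I expect to require genuine computation, as opposed to linear algebra, is the identification of this singular fiber as specifically focus--focus; for that I would rewrite the defining equation near $(0,0,k)$ as $\za\zb=c_k w+O(w^2)$ with $w=\zc-k$ and $c_k=\prod_{j\neq k}(k-j)\neq 0$, and match $(\hamone,\hamtwo)$ to the standard focus--focus normal form after a smooth reparameterization of the base near $(0,k^2)$.
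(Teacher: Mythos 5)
Your proposal is correct, and it supplies exactly the verifications the paper leaves implicit: the paper states this corollary without proof, relying on the Poisson-commutation lemma for the Lagrangian property and on the standard focus--focus picture, and your explicit fiber parameterization (solving for $|\za|,|\zb|$ from $\hamone$ and $|\za\zb|=|(\zc-1)\cdots(\zc-N)|$, with $\arg\za$ free) is the same one the paper writes down later as the immersion $\imm$ in Definition~\ref{dfn:29} and Lemma~\ref{lemma:2}. The critical-locus computation and the quadratic normal form $\za\zb=c_kw+O(w^2)$ at $(0,0,k)$ are both sound, so the argument goes through as outlined.
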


The fibers that are immersed spheres will be the main subject of study in this paper.
\begin{definition}
  \label{dfn:28}
  For $r\in\sett{1,\ldots,N}$ let
  \begin{displaymath}
    \lagstd = \sett{\hamone=0,\ \hamtwo=r^2}\subset\mnfld
  \end{displaymath}
  denote the Lagrangian torus fiber over $(0,r^2)\in\base$.
  Each $\lagstd$ is an immersed Lagrangian sphere.
\end{definition}

We will need explicit formulas for the immersions of these Lagrangian spheres.
Let $S^2\subset\rr^3$ denote the standard sphere.
Define cylindrical coordinates on $S^2$ by
\begin{equation}
  \label{eq:1}
  (a,e^{ib})\in(-\pi,\pi)\times S^1 \mapsto (\cos (a/2)\cos (b),\cos(a/2)\sin(b),\sin(a/2))\in S^2.
\end{equation}
We also define (two patches of) rectangular coordinates on $S^2$ by
\begin{displaymath}
  (x,y)\mapsto (x,y,\pm\sqrt{1-x^2-y^2}).
\end{displaymath}
The relationship between the coordinates is
\begin{displaymath}
  x=x(a)=\cos(a/2)\cos(b),\quad y=y(a)=\cos(a/2)\sin(b).
\end{displaymath}  
Let $\rho=\rho(a)=x^2+y^2$ and let $f:(-\pi,\pi)\to(-\pi,\pi)$ be a function such that $f'>0$, $f(a)=a$ near $a=0$, $f(a)=\pi-\rho$ near $a=\pi$, and $f(a)=-\pi+\rho$ near $a=-\pi$.
Note that $f$ can be viewed as a smooth function on $S^2$ (it does not depend on $e^{ib}$).
\begin{definition}
  \label{dfn:29}
  For $r\in\sett{1,\ldots,N}$ let $\imm:S^2\to \lagstd$ be the immersion defined in cylindrical coordinates on $S^2$ by the formula
  \begin{displaymath}
    \imm:(a,e^{ib})\mapsto (e^{ib}\xi(a),e^{-ib}\xi(a),-re^{if(a)}),
  \end{displaymath}
  with
  \begin{displaymath}
    \xi(a)=\sqrt{-re^{if(a)}-1}\sqrt{-re^{if(a)}-2}\cdots\sqrt{-re^{if(a)}-N}.
  \end{displaymath}
  Here we define the square root function by setting $\sqrt{-1}=i$ and using analytic continuation.
\end{definition}

\begin{lemma}
  \label{lemma:2}
  $\imm$ is a smooth immersion with one transverse self-intersection.
  The preimage of the self intersection points correspond to $a=\pm\pi$ in spherical coordinates.
\end{lemma}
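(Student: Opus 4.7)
The plan is to check four properties of $\imm$: smoothness across the two poles $a = \pm \pi$ (where the cylindrical chart degenerates), full rank of $d\imm$ everywhere, injectivity apart from the collapse of the two poles to a single point, and transversality at the resulting self-intersection. The smoothness extension and the transversality verification are the delicate steps, both turning on a careful analytic continuation of the square roots defining $\xi(a)$.

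For smoothness near the north pole I would pass to rectangular coordinates $(x, y)$, in which $f(a) = \pi - \rho$ with $\rho = x^2 + y^2$. Writing $u := -re^{if(a)} - r = r(e^{-i\rho} - 1) = -ir\rho + O(\rho^2)$, analytic continuation of $\sqrt{u}$ from $\sqrt{-2r} = i\sqrt{2r}$ at $a = 0$ (tracking $\arg u = \theta/2 + \pi$ continuously along $\theta = f(a)$) gives $\sqrt{u} = e^{3i\pi/4}\sqrt{r}\sqrt{\rho}(1 + O(\rho))$. The identity $e^{ib}\sqrt{\rho} = x + iy$ (valid on $(-\pi, \pi)$ since $\cos(a/2) > 0$) then makes $e^{ib}\sqrt{u}$ smooth in $(x, y)$; combined with the fact that each remaining factor $\sqrt{-re^{if(a)} - k}$ for $k \neq r$ is a smooth function of $\rho$ with nonzero limit, this shows the first coordinate of $\imm$ extends smoothly. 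The same argument using $e^{-ib}\sqrt{\rho} = x - iy$ handles the second coordinate, and the third, $re^{-i\rho}$, is manifestly smooth; the south pole is treated symmetrically, with $f(a) = -\pi + \rho$ and $\sqrt{u} = e^{i\pi/4}\sqrt{r}\sqrt{\rho}(1 + O(\rho))$. Injectivity on $S^2$ minus the poles follows by reading off $a$ from the third coordinate (since $f$ is strictly increasing into $(-\pi, \pi)$) and then $e^{ib}$ from the first, using that $\xi(a) \neq 0$ on $(-\pi, \pi)$ (a zero would force $e^{if(a)} = -1$). Both poles map to $(0, 0, r)$ and no interior point does. The rank of $d\imm$ is immediate away from the poles, and at each pole the leading-order expansion above produces linearly independent differentials proportional to $dx \pm i dy$ in the first two components.

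The main obstacle is verifying that the self-intersection is transverse. Since $p'(r) \neq 0$, the tangent space $T_{(0, 0, r)}\mnfld$ is the $(\za, \zb)$-plane, identified with $\cc^2$. The expansions above show that the image of $d\imm$ at each pole takes the form $T^{N/S} = \{A^{N/S}(w, \bar w) : w \in \cc\}$ for a nonzero scalar $A^{N/S}$, and a short calculation shows $T^N \cap T^S = \{0\}$ precisely when $A^N/A^S \notin \rr$. To compute the ratio I would track each factor $\sqrt{-re^{i\theta} - k}$ along $\theta = f(a)$: for $k > r$ the curve $\theta \mapsto -re^{i\theta} - k$ stays in the open left half-plane and does not wind around $0$, so its continuous argument returns to $\pi$ at both $\theta = \pm\pi$ and the two branch values agree; for $1 \le k < r$ the curve approaches $r - k > 0$ from the lower half-plane at the north pole and from the upper half-plane at the south pole, so the two continuous arguments differ by $2\pi$ and the corresponding square roots differ by a sign. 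The product over $k \neq r$ therefore satisfies $C^N/C^S = (-1)^{r-1}$, and combining with the factor $e^{3i\pi/4}/e^{i\pi/4} = i$ from $\sqrt{u}$ gives $A^N/A^S = \pm i$. Since this is purely imaginary, $T^N \cap T^S = \{0\}$ and the self-intersection is transverse.
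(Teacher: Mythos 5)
Your proposal is correct and follows essentially the same route as the paper: pass to rectangular coordinates near the poles, isolate the single vanishing square-root factor, and use $e^{\pm ib}\sqrt{\rho}=x\pm iy$ to see that $e^{\pm ib}\xi$ is smooth. The only difference is that the paper compresses the immersion/transversality step into ``an easy computation shows that the derivative at $(0,0)$ is invertible,'' whereas you carry out that computation explicitly (the branch-tracking giving $A^N/A^S=\pm i$), which is a welcome and correct completion of the detail the paper omits.
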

\begin{proof}
  The only nontrivial thing to check is that the formula for $\imm$ extends smoothly over the two points $a=\pm\pi$ (which are not technically in the cylindrical coordinate chart), and is an embedding at these points.
  Let us concentrate near $a=\pi$; the calculation near $a=-\pi$ is similar.

  Let $\eta(x,y)=\sqrt{e^{if(a)}+1}$.
  Near $(x,y)=(0,0)$,
  \begin{displaymath}
    e^{if(a)}+1=\rho(i+O(\rho)),\quad \eta=\rho^{1/2}\sqrt{i+O(\rho)}.
  \end{displaymath}
  Also, $\cos b =x\rho^{-1/2}$ and $\sin b=y\rho^{-1/2}$, and hence
  \begin{displaymath}
    e^{ib}\eta=(x+iy)\sqrt{i+O(\rho)},\quad e^{-ib}\eta=(x-iy)\sqrt{i+O(\rho)}.
  \end{displaymath}
  Thus $e^{\pm i b}\eta$ is a smooth function of $(x,y)$ near $(0,0)$.
  It is clear that $\xi/\eta$ is smooth, hence
  \begin{displaymath}
    e^{\pm i b}\xi=\frac{\xi}{\eta}\cdot e^{\pm i b}\eta
  \end{displaymath}
  is smooth.
  An easy computation shows that the derivative at $(0,0)$ is invertible, thus proving the lemma.
\end{proof}

The preimages of the self-intersection point will have a special role to play, so we give names to these points.
\begin{definition}
  \label{dfn:3}
  Let $\p\in S^2$ be the point that corresponds to $a=\pi$ and let $\q\in S^2$ be the point that corresponds to $a=-\pi$.
\end{definition}

Near the self-intersection point in the image $\lagstd=\imm(S^2)$, the Lagrangian has two branches; we call these branches the $\p$-branch and the $\q$-branch depending on whether $\p$ or $\q$ is in the lift to $S^2$ of the branch.
\subsection{Lefschetz fibration perspective}
\label{sec:lefsch-fibr-persp}
It is also possible to see the Lagrangian torus fibration from the point of view of Lefschetz fibrations.
In fact, this point of view is better for the purpose of studying holomorphic curves.

The Lefschetz fibration we will use is the map
\begin{equation}
  \label{eq:4}
  \leffib:\mnfld\to\cc,\quad   (\za,\zb,\zc)\mapsto \zc.
\end{equation}
The set of critical values of $\leffib$ is
\begin{equation}
  \label{eq:5}
  \critv=\sett{1,2,\ldots,n}.
\end{equation}

The fibers of $\leffib$ over non-critical values are isomorphic to $\cc^*$, and the fibers over the critical values are $\sett{\za\zb=0,\zc\in\critv}$.
The Hamiltonian flow of $\hamone$ preserves the Lefschetz fibers and foliates them into circles.
The Lagrangian torus fiber $L_{(b_1,b_2)}$ over $(b_1,b_2)\in B$ consists of the circles with $\hamone=b_1$ in all the Lefschetz fibers that lie over points $z$ in the circle $\sett{|z|^2= b_2}\subset\cc$ in the base of the Lefschetz fibration.

For the immersed Lagrangian spheres $\lagstd$,
\begin{displaymath}
  \pi(\lagstd)=\sett{|z|=r}
\end{displaymath}
and the singular point of $\lagstd$ coincides with the singular point of the Lefschetz fiber over $z=r$.
The intersection of $\lagstd$ and a smooth Lefschetz fiber is a circle which is a vanishing cycle for the singular point.

Near the singular point, the image under $\leffib$ of the $\p$-branch of $\lagstd$ is an arc that lies in the lower half-plane with one endpoint $z=r$, and the image of the $\q$-branch is an arc lying in the upper half-plane with one endpoint $z=r$.
\subsection{Special Lagrangian property and gradings}
Recall that $\poly=\za\zb-(\zc-1)\cdots(\zc-n)$ is the polynomial that cuts out $\mnfld$ and $\holvf$ is the Poincar\'e residue of $d\za\wedge d\zb \wedge d\zc/\poly\in\Omega^3_{\cc^3}(\mnfld)$.
More explicitly, $\holvf$ can be described as follows:
Given a point $p\in \mnfld$, choose any vector $V\in T_p\cc^3$ such that $d\poly(V)=1$.
Then at the point $p$
\begin{displaymath}
  \holvf(\cdot)=d\za\wedge d\zb\wedge d\zc(V,\cdot).
\end{displaymath}

\begin{lemma}
  \label{lemma:4}
  Let $L_b$ be a torus fiber.
  Then 
  \begin{displaymath}
    \mathrm{Im}\left(\frac{\holvf}{\zc}\right)\bigg|L_b=0.
  \end{displaymath}
  That is, $L_b$ is a special Lagrangian with respect to the meromorphic volume form $\holvf/\zc$.
\end{lemma}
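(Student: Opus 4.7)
The plan is to derive an explicit logarithmic formula for $\holvf/\zc$ and then test its imaginary part against the two Hamiltonian vector fields $\hamvf{\hamone}$ and $\hamvf{\hamtwo}$, which span $TL_b$ at smooth points of the fibration.

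First I would compute $\holvf$ from the Poincar\'e residue definition by choosing a vector field $V$ with $d\poly(V)=1$. On the locus $\{\zb\neq 0\}\subset\mnfld$, take $V=(1/\zb)\,\partial/\partial\za$; then $d\poly(V)=\zb/\zb=1$, and contracting $d\za\wedge d\zb\wedge d\zc$ with $V$ gives $\holvf=(1/\zb)\,d\zb\wedge d\zc$, so dividing by $\zc$ produces the clean expression
\begin{displaymath}
\holvf/\zc = d\log\zb\wedge d\log\zc.
\end{displaymath}
Writing $\zb=|\zb|e^{i\theta_{\zb}}$, $\zc=|\zc|e^{i\theta_{\zc}}$ and expanding $d\log z = d\log|z|+i\,d\theta$, the imaginary part separates as
\begin{displaymath}
\mathrm{Im}(\holvf/\zc) = d\log|\zb|\wedge d\theta_{\zc} + d\theta_{\zb}\wedge d\log|\zc|.
\end{displaymath}
A symmetric calculation with $V=(1/\za)\,\partial/\partial\zb$ gives $\holvf/\zc=-d\log\za\wedge d\log\zc$ on $\{\za\neq 0\}$, so the expression above is valid in a neighborhood of every point of a smooth fiber.

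Next I would evaluate this $2$-form on the pair $(\hamvf{\hamone},\hamvf{\hamtwo})$. The flow of $\hamvf{\hamone}=-i\za\,\partial/\partial\za+i\zb\,\partial/\partial\zb$ rotates $\za$ and $\zb$ by opposite phases and fixes $\zc$, so $d\log|\zb|(\hamvf{\hamone})=0$ and $d\theta_{\zc}(\hamvf{\hamone})=0$. The vector field $\hamvf{\hamtwo}$ preserves its own Hamiltonian $|\zc|^2$, hence also $|\zc|$, so $d\log|\zc|(\hamvf{\hamtwo})=0$. Plugging these facts into the two-term expression for $\mathrm{Im}(\holvf/\zc)$ makes each summand vanish on $(\hamvf{\hamone},\hamvf{\hamtwo})$. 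Since $\mathrm{Im}(\holvf/\zc)|_{L_b}$ is a $2$-form on a $2$-manifold and these vector fields span $TL_b$ wherever $(\hamone,\hamtwo)$ is a submersion, the restriction vanishes on the smooth locus of $L_b$; continuity handles any remaining points.

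The only real technical item is executing the Poincar\'e residue computation correctly. Once $\holvf/\zc$ is in logarithmic form, the two defining conditions of $L_b$---$|\zc|$ constant coming from $\hamtwo$, and rotation invariance in $(\za,\zb)$ coming from $\hamone$---each annihilate exactly one of the two summands in $\mathrm{Im}(\holvf/\zc)$, which is why the vanishing is essentially immediate.
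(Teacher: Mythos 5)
Your proof is correct and takes essentially the same approach as the paper's: compute the Poincar\'e residue explicitly by picking a convenient $V$ with $d\poly(V)=1$, then evaluate the imaginary part on $\hamvf{\hamone}$ and $\hamvf{\hamtwo}$, which span the tangent space of the fiber. The only difference is cosmetic --- the paper takes $V$ in the $\partial/\partial\zc$ direction, obtaining $\holvf=\alpha\,d\za\wedge d\zb$ and reducing to $i\,d\log\zc(\hamvf{\hamtwo})$, whereas you take $V=(1/\zb)\,\partial/\partial\za$ to reach the logarithmic form $d\log\zb\wedge d\log\zc$ directly; both land on the same vanishing conditions.
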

\begin{proof}
  Fix a point $p\in\mnfld$ and let $V=(0,0,\alpha)\in T_p\cc^3$ with $\alpha=(\pd{\poly}{\zc})^{-1}$, so $d\poly(V)=1$.
  Then
  \begin{displaymath}
    \holvf=d\za\wedge d\zb\wedge d\zc(V,\cdot,\cdot)=\alpha d\za\wedge d\zb.
  \end{displaymath}
  Hence
  \begin{displaymath}
    \holvf(\hamvf{\hamone},\cdot)=-i \alpha  \za d\za -i \alpha  \zb d\za=-i\alpha d\poly+i\alpha\pd{\poly}{\zc}d\zc=i\alpha\pd{\poly}{\zc}d\zc=id\zc.
  \end{displaymath}
  Then
  \begin{displaymath}
    \frac{\holvf}{\zc}(\hamvf{\hamone},\hamvf{\hamtwo})=i\frac{d\zc}{\zc}(\hamvf{\hamtwo})=id\log \zc(\hamvf{\hamtwo}),
  \end{displaymath}
  and hence
  \begin{displaymath}
    \mathrm{Im}\left(\frac{\holvf}{\zc}(\hamvf{\hamone},\hamvf{\hamtwo})\right)=d\log|\zc|(\hamvf{\hamtwo})=0.
  \end{displaymath}
  Since $\hamvf{\hamone},\hamvf{\hamtwo}$ span the tangent space of the Lagrangian torus fibers, it follows that they are special Lagrangian with respect to $\holvf/\zc$.

\end{proof}

We briefly recall the notion of a grading in the context of almost Calabi-Yau manifolds.
Suppose $M$ is a symplectic manifold with almost complex structure $J$, $L$ is an embedded Lagrangian, and $\Omega$ is a nowhere vanishing section of $\Lambda^{top}(TM,J)$.
There is a canonical map $s_L:L\to \mathcal {G}$, where $\mathcal G$ is the bundle over $M$ whose fiber over a point $p$ is the set of Lagrangian planes in $T_pM$.
$\Omega$ determines a (squared) phase map 
\begin{displaymath}
  \mathrm{Det}^2_\Omega:\mathcal G\to S^1.
\end{displaymath}
Then a grading for $L$ is a map $\gr{L}:L\to\rr$ such that 
\begin{equation}
  \label{eq:3}
  e^{2\pi i\gr{L}}=\mathrm{Det}^2_\Omega\circ s_L.
\end{equation}

In case $L$ is only immersed, i.e.\ there is an immersion $i:L\to M$ with $i(L)$ Lagrangian, then a natural map $s_L:L\to\mathcal G$ still exists, and a grading is defined to be a function $\gr{L}:L\to\rr$ that satisfies \eqref{eq:3}.

Now consider the immersed Lagrangian spheres $\lagstd$.
Lemma \ref{lemma:4} shows that any function $\theta:S^2\to \rr$ that satisfies
\begin{displaymath}
  e^{2\pi i \theta}=\frac{\zc^2}{|\zc^2|}\circ\imm
\end{displaymath}
is a grading for the Lagrangian sphere.
We pick one such choice:

\begin{definition}
  \label{dfn:7}
  We grade $\imm:S^2\to\lagstd$ with respect to $\holvf$ by the function
  \begin{displaymath}
    \grstd(a,e^{ib})=\frac{f(a)}{\pi}.
  \end{displaymath}
  Here, $(a,e^{ib})$ are cylindrical coordinates on $S^2$ as in \eqref{eq:1}.
\end{definition}

The grading allows us to assign an index to a self-intersection point, or more precisely to pairs of branches (see the discussion following Definition \ref{dfn:3}).
Actually, because $S^2$ is connected, the index is independent of the choice of grading.
\begin{definition}
  \label{dfn:35}
  Let
  \begin{eqnarray*}
    \bjstd& = &\set{(p,q)\in S^2\times S^2}{\imm(p)=\imm(q),\ p\neq q}\\
    &=& \sett{(\p,\q),(\q,\p)}.
  \end{eqnarray*}
  For each element $(p,q)\in\bjstd$, we define an index by the formula
  \begin{displaymath}
    \ind (p,q)=n+\grstd(q)-\grstd(p)-2\cdot\ang(\immpf T_pS^2,\immpf T_q S^2).
  \end{displaymath}
  Here, $\ang(\immpf T_pS^2,\immpf T_qS^2)=a+b$ where
  \begin{displaymath}
    \immpf T_qS^2 =\left[
    \begin{array}{cc}
      e^{2\pi i a} & 0 \\
      0 & e^{2\pi i b}
    \end{array}
  \right]\cdot \immpf T_pS^2
  \end{displaymath}
  in an appropriate unitary basis.
\end{definition}

Equivalently, $\ind (p,q)$ can be defined in the following way:
In the Lagrangian Grassmannian, start with $T_qS^2$.
Move in the positive definite direction from $T_qS^2$ to $T_pS^2$, while simultaneously changing the real number $\grstd(q)$ to match the phase of the moving Lagrangian plane.
This will result in a new real number $\theta'$ when the moving plane reaches $T_pS^2$.
The index is then
\begin{displaymath}
  \ind (p,q)=\theta'-\grstd(p).
\end{displaymath}

\begin{lemma}
  \label{lemma:10}
  The indices of the elements of $\bjstd$ are
  \begin{eqnarray*}
    \ind (\p,\q) &= &-1,\\
    \ind (\q,\p) &=& 3.
  \end{eqnarray*}
\end{lemma}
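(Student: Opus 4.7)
The plan is to combine (i) the grading values at $\p$ and $\q$ from Definition \ref{dfn:7}, with (ii) an explicit description of the two Lagrangian tangent planes $V_\p := \immpf T_\p S^2$ and $V_\q := \immpf T_\q S^2$ inside $T_{\imm(\p)}\mnfld$, and then to plug these into the index formula of Definition \ref{dfn:35}. Evaluating $f$ at $\rho = 0$ gives $f(\pi) = \pi$ and $f(-\pi) = -\pi$, so directly $\grstd(\p) = 1$ and $\grstd(\q) = -1$.

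For the tangent planes I would use the Lefschetz fibration picture of Section \ref{sec:lefsch-fibr-persp}. Near the critical point $(0,0,r)$, the change of variables $w = (\zc - r)\prod_{k \neq r}(\zc - k)/P_r$ (with $P_r := \prod_{k \neq r}(r - k)$) puts $\mnfld$ in the standard Morse form $\za\zb = w$ and identifies $T_{(0,0,r)}\mnfld$ with $\cc^2_{\za\zb}$. Each branch of $\lagstd$ near $(0,0,r)$ is a Lagrangian thimble for the local Lefschetz fibration, and one checks that the thimble over a ray $\{\arg w = \psi\}$ is the disc $\{(\za, e^{i\psi}\bar\za) : \za \in \cc\}$, whose tangent plane at the critical point is $e^{i\psi/2}V_0$ with $V_0 := \{(z, \bar z) : z \in \cc\}$. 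The tangent direction in the $w$-plane of the upper arc of $\{|\zc|=r\}$ at $\zc=r$ (the $\q$-branch) is $iP_r$, and of the lower arc (the $\p$-branch) is $-iP_r$. Since $e^{i\alpha}V_0$ is $\pi$-periodic in $\alpha$ (because $-1 \in O(2)$ preserves $V_0$), the common factor $e^{i\arg P_r/2}$ drops out, and $V_\q = e^{i\pi/2}V_\p$ as Lagrangian planes.

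I would then apply the alternative formulation of the index in Definition \ref{dfn:35}: the unique positive rotation in the Lagrangian Grassmannian from $V_\p$ to $V_\q$ is the path $V(t) = e^{it}V_\p$ for $t \in [0, \pi/2]$, along which $\mathrm{Det}^2_\Omega(V(t)) = e^{4it}\mathrm{Det}^2_\Omega(V_\p)$, so the continuous grading lift increases by exactly $1$. Starting at $V_\q$ with lift $\grstd(\q) = -1$ and rotating by $\pi/2$ to $V_\p$ gives $\theta' = 0$, so $\ind(\p,\q) = \theta' - \grstd(\p) = -1$; symmetrically, starting at $V_\p$ with lift $\grstd(\p) = 1$ and rotating to $V_\q$ gives $\theta' = 2$, so $\ind(\q,\p) = \theta' - \grstd(\q) = 3$. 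The main subtlety is tracking signs and branches of the various square roots in the immersion formula for $\imm$, but the Lefschetz model sidesteps this entirely: only the difference $\psi^+ - \psi^- = \pi$ enters the answer, and the $r$- and $N$-dependent shift by $\arg P_r$ is killed by the $\pi$-periodicity of $e^{i\alpha}V_0$ on the Lagrangian Grassmannian.
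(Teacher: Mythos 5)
Your argument is correct, and it supplies a proof that the paper actually omits: Lemma \ref{lemma:10} is stated without proof. The three ingredients all check out. The grading values $\grstd(\p)=1$, $\grstd(\q)=-1$ follow immediately from $f(\pm\pi)=\pm\pi$ in Definition \ref{dfn:7}. The tangent-plane computation is the real content, and your Lefschetz-model route is essentially the one the paper itself uses later for the general matching cycles: Lemma \ref{lemma:19} shows $T_q\Delta_\gamma=\Span_\rr\sett{(\xi,\xi,0),(i\xi,-i\xi,0)}$ with $\xi=\sqrt{\dot\gamma(1)}\prod_{k\neq j}\sqrt{j-k}$, which is exactly your $e^{i\arg\xi}V_0$, and the two branches of $\lagstd$ approach $(0,0,r)$ along arcs whose tangent directions differ by $\pi$, so the planes differ by the diagonal rotation $e^{i\pi/2}$ modulo the $e^{i\pi}$-stabilizer, independently of the $r$- and $N$-dependent common phase. (A direct differentiation of $\imm$ as in the proof of Lemma \ref{lemma:2} gives $V_\p=c_\p\sqrt{i}\,V_0$ and $V_\q=c_\q\sqrt{-i}\,V_0$ with $c_\p/c_\q\in\rr$, confirming the same conclusion.) Your final step agrees with both formulations in Definition \ref{dfn:35}: with $V_\q=\mathrm{diag}(e^{2\pi i/4},e^{2\pi i/4})V_\p$ one gets $a=b=1/4$ and $\ind(\p,\q)=2+(-1)-1-2\cdot\tfrac12=-1$, $\ind(\q,\p)=2+1+1-1=3$. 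One small inconsistency worth flagging: with your normalization $w=(\zc-r)\prod_{k\neq r}(\zc-k)/P_r$ the defining equation becomes $\za\zb=P_r w$ rather than $\za\zb=w$, and $dw/d\zc=1$ at $\zc=r$, so the arc tangents in the $w$-plane are $\pm ir$, not $\pm iP_r$; but as you observe, only the difference of arguments enters, so this does not affect the conclusion. Your computation is also consistent with the paper's Corollary \ref{cor:3}, which reproves the same index values for general $\Sigma_\gamma$ by deforming $\gamma$ to a circle.
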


The definitions and conventions given above agree with those in \cite{seidel-fcpclt}.

\subsection{Exactness}
Recall that $(\mnfld,\sympl,\oneform)$ is an exact symplectic manifold with $\sympl=d\lambda$.
Since $H^1(S^2)=0$, $\imm^*\oneform$ is exact.
Thus each $\lagstd$ is an exact immersed Lagrangian.
We fix a primitive $\exactprim:S^2\to\rr$ of $\imm^*\oneform$, so
\begin{displaymath}
  d\exactprim=\imm^*\oneform.
\end{displaymath}
The exact choice of $\exactprim$ is not important for our purposes.

\section{Discs and strips}
\label{sec:discs-strips}
In this section we discuss the notion of (boundary) marked discs and strips with boundary on $\lagstd$.
These discs and strips are allowed (but not required) to jump branches of $\lagstd$ only at marked points.

From a topological point of view there is not much difference between a disc and a strip.
However, they will have different roles to play in Floer cohomology.
Holomorphic strips will be used to define the Floer differential; we use strips because in general time dependent almost complex structures are needed to achieve transversality.
The discs play an auxiliary role---their main purpose is to help precisely describe the compactification of the moduli space of holomorphic strips.
Since the technical details of the variant of Floer cohomology used in this paper are relegated to \cite{alston-fciegl}, we do not really need discs, but we prefer to include an exposition for completeness.

A key point is that the Lagrangians $\lagstd$ are immersed so disc bubbles that connect via branch jumps can appear.
To help deal with this, we add extra structure to the definition of marked discs and strips:
Following \cite{MR2785840}, we include a lift of the boundary of the disc to $S^2$ as part of the data.

In this section we concentrate on the topology of discs and strips, including a discussion of the Maslov index.
Our conventions follow \cite{seidel-fcpclt}.
The analytic theory of discs and strips will be described in Sections \ref{sec:analytic-setup} and \ref{sec:moduli-spac-holom}.

\subsection{Marked discs}

\begin{definition}
  \label{dfn:9}
  A \textit{marked disc}
  \begin{displaymath}
    \disc{u}=(u,\dmp{\disc{u}},\ell)
  \end{displaymath}
  with boundary on $\imm:S^2\to\lagstd$ consists of the following data:
  \begin{itemize}
  \item A continuous map $u:(\dd,\bdy\dd)\to(\mnfld,\lagstd)$.
  \item A list (possibly empty) of marked boundary points $\dmp{\disc{u}}=(z_0,\ldots,z_k)$.
    Moreover, each marked point is labeled as incoming ($-$) or outgoing ($+$).
    We will generally suppress the labeling from the notation.
  \item A continuous map $\ell:\bdy\dd\setminus \dmp{\disc{u}}\to S^2$.
  \end{itemize}
  The maps are required to satisfy
  \begin{displaymath}
    \imm\circ \ell = u | \bdy \dd\setminus \dmp{\disc{u}}.
  \end{displaymath}
  If $C$ is a component of $\bdy \dd\setminus \dmp{\disc{u}}$, then we also require that $\ell$ extends continuously to $\overline{C}$.\footnote{In case $C=\bdy\dd\setminus\sett{pt}$, we think of $\overline{C}$ as being a closed interval instead of a circle.}
  We also define $\dmp{\disc{u}}^-=\set{z_i\in\dmp{\disc{u}}}{z_i\text{ is incoming}}$ and $\dmp{\disc{u}}^+=\set{z_i\in\dmp{\disc{u}}}{z_i\text{ is outgoing}}$.
\end{definition}

The map $\ell$ is discontinuous at a marked point if and only if $u$ switches branches of $\lagstd$ at the marked point.
We will refer to such behavior as a \textit{branch jump}, and call the marked point a \textit{branch jump point}, or \textit{branch point} for short.
Recall from Definition \ref{dfn:35} that
\begin{displaymath}
  \bjstd=\set{(p,q)\in S^2\times S^2}{\imm(p)=\imm(q),\ p\neq q}.
\end{displaymath}
$\bjstd$ can be thought of as the set of possible branch jump types.
More precisely, let the marked point $z_i\in\bdy \dd$ be a branch point.
As $z$ moves along $\bdy \dd$ in the counterclockwise direction towards $z_i$, $\ell(z)$ converges to a point $p\in S^2$.
Likewise, as $z$ moves along $\bdy\dd$ in the clockwise direction towards $z_i$, $\ell(z)$ converges to a point $q\in S^2$.
Since $z_i$ is a branch point, $p\neq q$.

\begin{definition}
  \label{dfn:2}
  With the above notation, if $z_i$ is an incoming point then the \textit{branch jump type} of $\disc{u}$ at $z_i$ is
  \begin{displaymath}
    (q,p)\in \bjstd;
  \end{displaymath}
  and if $z_i$ is an outgoing point then the \textit{branch jump type} is
  \begin{displaymath}
    (p,q)\in\bjstd.
  \end{displaymath}
\end{definition}

\begin{definition}
  \label{dfn:15}
  Given a marked disc $\disc{u}=(u,\dmp{\disc{u}},\ell)$, let
  \begin{displaymath}
    \brindices{\pm}{\disc{u}}=\set{i}{z_i\in\dmp{\disc{u}}^\pm\text{ is a branch point}}.
  \end{displaymath}
  Let
  \begin{displaymath}
    \brjumps{\pm}{\disc{u}} : \brindices{\pm}{\disc{u}} \to R
  \end{displaymath}
  be the function that assigns to $i\in \brindices{\pm}{\disc{u}}$ the branch jump type of the point $z_i$.
  We say that the branch jumps of $\disc{u}$ are of \textit{type} $\brjumps{\pm}{\disc{u}}$.
\end{definition}

\subsection{Marked strips}

Let $\str=\rr\times[0,1]$ and $\bdy\str=\rr\times\sett{0,1}$ with coordinates $s+it=(s,t)$.
Let $\bdyb\str=\rr\times\sett{0}$, $\bdyt\str=\rr\times\sett{1}$.
\begin{definition}
  \label{dfn:5}
  A \textit{marked strip} 
  \begin{displaymath}
    \strip{u}=(u,\smp{0,\strip{u}},\smp{1,\strip{u}},\ell)
  \end{displaymath}
  with boundary on $\imm:S^2\to\lagstd$ consists of the following data:
  \begin{itemize}
  \item A continuous map $u:(\str,\bdy\str)\to (\mnfld,\lagstd)$ such $\lim_{s\to\pm\infty}u(s,\cdot)=\text{constant}$, uniformly in $t$.
  \item Two lists $\smp{i,\strip{u}}=(z^i_1,\ldots,z^i_{k_i})\subset\bdy_i\str$, $i=0,1$ of marked boundary points.
    Moreover, each marked point is labeled as incoming or outgoing.
  \item A continuous map $\ell:\bdy\str\setminus \smp{0,\strip{u}}\cup\smp{1,\strip{u}}\to S^2$.% such that $\lim_{s\to\pm\infty}\ell((s,i))$ exists for $i=0,1$.
  \end{itemize}
  The maps are required to satisfy
  \begin{displaymath}
    \imm\circ\ell=u|\bdy\str\setminus \smp{0,\strip{u}}\cup\smp{1,\strip{u}}.
  \end{displaymath}
  For $i=0,1$ let
  \begin{eqnarray*}
    \smp{i,\strip{u}}^+&=&\set{z_j\in\smp{i,\strip{u}}}{z_j\text{ is outgoing}},\\
    \smp{i,\strip{u}}^-&=&\set{z_j\in\smp{i,\strip{u}}}{z_j\text{ is incoming}},\\
    \brindices{\pm}{i,\strip{u}}&=&\set{j}{z_j\in\smp{i,\strip{u}}^\pm\text{ is a branch point}}.
  \end{eqnarray*}
  Let $\brjumps{\pm}{i,\strip{u}}:\brindices{\pm}{i,\strip{u}}\to\bjstd$ denote the types of the branch jumps.
  Also, we view $s=-\infty$ as an additional incoming marked point and $s=+\infty$ as an additional outgoing marked point (although we do not include them in the lists $\smp{i,\strip{u}}^\pm$), and let $\brjumps{\pm\infty}{\strip{u}}\in \bjstd\amalg\sett{\emptyset}$ denote the branch jump types of $s=\pm\infty$ (if $s=\pm\infty$ is not a branch jump then $\brjumps{\pm\infty}{\disc{u}}=\emptyset$).
\end{definition}
\begin{remark}
  \label{rmk:5}
  Pick a biholomorphism $\phi:\dd\setminus\sett{-1,1}\to\str$.
  Then the marked strip $(u,\Sigma_0,\Sigma_1,\ell)$ naturally corresponds to the marked disc
  \begin{displaymath}
    (u\circ\phi,\Sigma=\Sigma_0\cup\Sigma_1\cup\sett{-\infty,+\infty},\ell\circ\phi).
  \end{displaymath}
  The only ambiguity is in the choice of the biholomorphism and the ordering of the marked points.
  Despite the ambiguity, many properties of marked discs carry over to give analogous properties of marked strips.
  For example, the notion of a branch jump and branch jump type carries over.
  We will take advantage of this similarity to avoid repeating similar definitions and lemmas.
  On the other hand, keep in mind that discs and strips will have different roles to play when we get to Floer theory.
\end{remark}

\subsection{Maslov index}
\label{sec:maslov-index-theory}
Recall that a bundle pair $(E,F)$ consists of the data
\begin{itemize}
\item a symplectic vector bundle $E\to \dd$,
\item a Lagrangian subbundle $F$ of $E|\bdy \dd$.
\end{itemize}
Let $\mu(E,F)$ denote the Maslov index of the bundle pair $(E,F)$.

Let $V$ be a symplectic vector space and $\Lambda_0$, $\Lambda_1$ two Lagrangian planes.
Let $J$ be a compatible almost complex structure such that $J\cdot\Lambda_0=\Lambda_1$.
The path of Lagrangian planes
\begin{displaymath}
  t\mapsto e^{\pi Jt/2}\cdot \Lambda_0,\qquad 0\leq t\leq 1
\end{displaymath}
is called the \textit{positive definite path} from $\Lambda_0$ to $\Lambda_1$.
It is well-defined up to homotopy.
Similarly, the path
\begin{displaymath}
  t\mapsto e^{-\pi Jt/2}\cdot\Lambda_0
\end{displaymath}
is the \textit{negative definite path} from $\Lambda_0$ to $\Lambda_1$.

Let $\disc{u}=(u,\dmp{\disc{u}},\ell)$ be a marked disc with boundary on $\lagstd$ as in Definition \ref{dfn:9}.
Associated to $\disc{u}$ we define a bundle pair as follows:
First, $E=u^*T\mnfld$.
Second, $F|\bdy \dd\setminus\dmp{\disc{u}}=\iota_*(\ell^*TS^2)$.
If a marked point is not a branch point, then $F$ extends over the marked point.
To extend $F$ over the branch points, we proceed as follows:
Homotope $F$ slightly so that it is constant on each side of the marked point.
This gives two Lagrangian planes; call the one that occurs before the marked point (in counterclockwise order) the first plane and the one that occurs after the marked point the second plane.
If the marked point is an incoming point then we extend $F$ by moving along the negative definite path from the  first plane to the second plane.
If the marked point is an outgoing point then we extend $F$ by moving along the positive definite path from the first plane to the second plane.

\begin{definition}
  \label{dfn:8}
  The \textit{Maslov index} $\mu(\disc{u})$ of a marked disc $\disc{u}$ is defined to be the Maslov index of the bundle pair $(E,F)$ constructed above.
\end{definition}

\begin{proposition}[\cite{seidel-fcpclt} Proposition 11.13]
  \label{prop:13}
  Let $\disc{u}$ be a marked disc.
  Then
  \begin{displaymath}
    \mu(\disc{u})=\sum_{i\in\brindices{-}{\disc{u}}} \ind \brjumps{-}{\disc{u}}(i)-\sum_{i\in\brindices{+}{\disc{u}}}\ind\brjumps{+}{\disc{u}}(i)+2(1-|\brindices{-}{\disc{u}}|).
  \end{displaymath}
\end{proposition}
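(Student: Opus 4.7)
The approach is to compute $\mu(\disc{u})$ as the total winding of the squared phase of $F$ around $\bdy\dd$, splitting the winding into contributions from the smooth arcs of $\bdy\dd\setminus\dmp{\disc{u}}$ and from the extension arcs at each marked point. Since $\dd$ is contractible, $E=u^*T\mnfld$ can be trivialized globally, so $F$ becomes a loop $\bdy\dd\to\Lambda(n)$ and $\mu(\disc{u})$ equals the degree of the induced squared phase map $\mathrm{Det}^2_\Omega\circ F:\bdy\dd\to S^1$, which I will compute by choosing continuous lifts on each piece and summing the lifted phase changes.

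On each smooth arc $C$, the boundary condition is $F=\imm_*\ell^*TS^2$, so by the grading identity \eqref{eq:3} a natural lift of the squared phase along $C$ is $\grstd\circ\ell$. Going around $\bdy\dd$ counterclockwise, the smooth contributions telescope to
\[
  \sum_i\bigl(\grstd(p_i)-\grstd(q_i)\bigr),
\]
where $p_i$ and $q_i$ denote the counterclockwise-before and counterclockwise-after limits of $\ell$ at the marked point $z_i$. Non-branch marked points have $p_i=q_i$ and contribute zero, so this sum effectively runs over branch points only.

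At each branch point I would compute the lifted phase change along the extension arc. An outgoing branch of type $(p_i,q_i)$ uses the positive definite path from $\imm_*T_{p_i}S^2$ to $\imm_*T_{q_i}S^2$, which changes the lifted phase by $2\ang(\imm_*T_{p_i}S^2,\imm_*T_{q_i}S^2)$, while an incoming branch of type $(q_i,p_i)$ uses the negative definite path, differing from the corresponding positive definite path by the contribution of one full positive definite loop (whose image under $\mathrm{Det}^2$ has degree $n$). Substituting the defining formula $\ind(p,q)=n+\grstd(q)-\grstd(p)-2\ang$ of Definition~\ref{dfn:35} converts each $2\ang$ into a linear combination of $n$, grading values, and $\ind$, and the grading values cancel against the telescoping terms above. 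The net effect is that each outgoing branch point contributes $n-\ind\brjumps{+}{\disc u}(i)$ and each incoming branch point contributes $-n+\ind\brjumps{-}{\disc u}(i)$, so summing gives
\[
  \mu(\disc u)=\sum_{i\in\brindices{-}{\disc u}}\ind\brjumps{-}{\disc u}(i)-\sum_{i\in\brindices{+}{\disc u}}\ind\brjumps{+}{\disc u}(i)+n\bigl(|\brindices{+}{\disc u}|-|\brindices{-}{\disc u}|\bigr).
\]
With $n=2$ and the standing $A_\infty$ convention that a marked disc carries exactly one distinguished outgoing marked point (so $|\brindices{+}{\disc u}|=1$ in the branch-outgoing case, with a parallel bookkeeping argument when the distinguished outgoing point is smooth), the constant simplifies to $2(1-|\brindices{-}{\disc u}|)$, yielding the formula.

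The main obstacle I foresee is the careful tracking of sign conventions, specifically the asymmetric roles of the branch jump type coordinates: the incoming type $(q_i,p_i)$ has its coordinates swapped relative to the extension direction $T_{p_i}S^2\to T_{q_i}S^2$, and the $\ang$ appearing in the positive/negative definite change is not literally the one appearing in Definition~\ref{dfn:35}. A consistent choice of $\ang$ satisfying $\ang(T_pS^2,T_qS^2)+\ang(T_qS^2,T_pS^2)=n/2$, equivalently $\ind(p,q)+\ind(q,p)=n$ (which is exactly $-1+3=2$ in the example of Lemma~\ref{lemma:10}), is required for the cancellation to work; once the convention is pinned down, the remaining computation is purely algebraic.
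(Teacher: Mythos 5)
The paper gives no proof of this statement---it is quoted from \cite{seidel-fcpclt}---so there is nothing internal to compare against; your method (trivialize $u^*T\mnfld$ over $\dd$, read off $\mu(\disc{u})$ as the winding of the squared phase, use $\grstd\circ\ell$ as the continuous lift along the smooth arcs so that their contributions telescope, and convert the angle contributed by each positive or negative definite cap into an index via Definition \ref{dfn:35}) is the standard one, and your bookkeeping through the intermediate formula
\begin{displaymath}
  \mu(\disc{u})=\sum_{i\in\brindices{-}{\disc{u}}}\ind\brjumps{-}{\disc{u}}(i)-\sum_{i\in\brindices{+}{\disc{u}}}\ind\brjumps{+}{\disc{u}}(i)+n\bigl(|\brindices{+}{\disc{u}}|-|\brindices{-}{\disc{u}}|\bigr)
\end{displaymath}
is correct, including the identity $\ind(p,q)+\ind(q,p)=n$ that makes the grading terms cancel.

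The gap is the last step. Definition \ref{dfn:9} imposes no restriction to a single outgoing marked point, and the discs to which the paper actually applies this proposition---those of Proposition \ref{prop:4}---have all $k+1$ marked points outgoing and none incoming, so $|\brindices{+}{\disc{u}}|=k+1$ and your appeal to a one-output $A_\infty$ convention is unavailable. The two formulas then genuinely disagree on these discs: each of the $k+1$ boundary arcs carries the lift from $\q$ to $\p$ and contributes $\grstd(\p)-\grstd(\q)=2$, and each positive definite cap contributes $2\ang(\immpf T_{\p}S^2,\immpf T_{\q}S^2)=1$, so $\mu=3(k+1)$, which is the value of your formula but not the stated value $k+3$ (they coincide only for $k=0$). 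So what you have proved is the displayed identity above, and the proposition as printed follows from it only under the additional hypothesis $|\brindices{+}{\disc{u}}|=1$. I would not treat this as a defect of your argument so much as of the target: equating the two expressions for the index in Proposition \ref{prop:17} forces exactly your constant $2\bigl(|\brindices{+}{\disc{u}}|-|\brindices{-}{\disc{u}}|\bigr)$ in place of $2\bigl(1-|\brindices{-}{\disc{u}}|\bigr)$, so your version is the one the paper implicitly relies on. The right fix is to state and prove your general formula and note explicitly when it reduces to the printed one, rather than to introduce a convention that the paper's own discs do not satisfy.
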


\begin{remark}
  More generally, if the $2$ in the above formula is replaced by $n$, then formula holds for a graded immersed Lagrangian of dimension $n$.
  If $\dd$ is replaced by a Riemann surface $S$, then the $1$ needs to be replaced by $\chi(S)$.
\end{remark}

The Maslov index is additive in the sense that if an outgoing marked point of one disc is glued to an incoming marked point of another disc then the Maslov index of the glued disc is the sum of the Maslov indices of the two discs.

\begin{definition}
  \label{dfn:30}
  By Remark \ref{rmk:5}, a marked strip can be viewed as a marked disc, with the $-\infty$ end of the strip an incoming marked point and the $+\infty$ end an outgoing marked point.
  The \textit{Maslov index} $\mu(\strip{u})$ of the marked strip is defined to be the Maslov index of $\strip{u}$ as a marked disc.
\end{definition}

This agrees with the usual notion of the Maslov index of a strip.

\begin{proposition}
  \label{prop:14}
  Let $\disc{u}$ be a marked strip.
  If $-\infty$ is a branch jump let $\delta=1$, otherwise let $\delta=0$.
  Then
  \begin{eqnarray*}
    \mu(\disc{u})&=&\ind\brjumps{-\infty}{\strip{u}}-\ind\brjumps{+\infty}{\strip{u}}+
    \sum_{j=0,1;\ i\in\brindices{-}{j,\disc{u}}} \ind \brjumps{-}{j,\disc{u}}(i)-\sum_{j=0,1;\ i\in\brindices{+}{j,\disc{u}}}\ind\brjumps{+}{j,\disc{u}}(i)\\
    &&+2(1-\delta-|\brindices{-}{\strip{u}}|).
  \end{eqnarray*}
  Here, $\ind \brjumps{\pm\infty}{\strip{u}}$ is defined to be $0$ if $\pm\infty$ is not a branch jump.
\end{proposition}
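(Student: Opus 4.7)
The plan is to reduce this to the already-proven disc case (Proposition \ref{prop:13}) via the disc/strip identification in Remark \ref{rmk:5}, and then do the bookkeeping that distinguishes the two ends $\pm\infty$ from the finite boundary marked points.

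First I would fix a biholomorphism $\phi:\dd\setminus\sett{-1,1}\to\str$ sending $-1\mapsto -\infty$ and $+1\mapsto +\infty$, and form the associated marked disc $\disc{u}$ with marked point set $\smp{0,\strip{u}}\cup\smp{1,\strip{u}}\cup\sett{-\infty,+\infty}$, where $-\infty$ is labeled incoming and $+\infty$ outgoing. The bundle pair used to compute $\mu(\disc{u})$ pulls back to the bundle pair on the strip under $\phi$, so $\mu(\strip{u})=\mu(\disc{u})$ by Definition \ref{dfn:30}, and Proposition \ref{prop:13} applies directly to $\disc{u}$.

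Next I would split each of the two index sums in Proposition \ref{prop:13} into the contribution coming from finite boundary marked points and the contribution from $\pm\infty$. By construction the finite branch points are indexed by $\brindices{\pm}{0,\strip{u}}\cup\brindices{\pm}{1,\strip{u}}$. The point $-\infty$ lies in $\brindices{-}{\disc{u}}$ exactly when it is a branch jump, i.e.\ when $\delta=1$, and analogously for $+\infty$. With the convention $\ind\brjumps{\pm\infty}{\strip{u}}=0$ in the non-branch-jump case, the asymptotic contributions can be written uniformly as $+\ind\brjumps{-\infty}{\strip{u}}$ and $-\ind\brjumps{+\infty}{\strip{u}}$ in both cases. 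For the cardinality I note $|\brindices{-}{\disc{u}}|=|\brindices{-}{\strip{u}}|+\delta$, so that the term $2(1-|\brindices{-}{\disc{u}}|)$ becomes $2(1-\delta-|\brindices{-}{\strip{u}}|)$. Assembling these pieces reproduces the formula in the statement.

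The main obstacle is not analytic but combinatorial: one must carefully disentangle the two conventions---that $\pm\infty$ is counted in $\brindices{\pm}{\disc{u}}$ only when it is actually a branch jump, and that $\ind\brjumps{\pm\infty}{\strip{u}}$ has been defined to vanish in the non-branch-jump case---so that the single disc formula of Proposition \ref{prop:13} repackages cleanly into the stated strip formula regardless of which of the four possible combinations of branch behavior at $\pm\infty$ occurs. Once this bookkeeping is made explicit, the proof is immediate.
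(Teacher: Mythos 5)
Your proposal is correct and follows the same route as the paper: the paper's proof is exactly the one-line reduction to Proposition \ref{prop:13} via Remark \ref{rmk:5}, with the observation that $\brindices{\pm}{j,\strip{u}}$ excludes the ends $\pm\infty$, so their contributions and the $\delta$ correction to the cardinality term must be accounted for separately. Your explicit bookkeeping of the four cases is just a more detailed writeup of what the paper leaves implicit.
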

\begin{proof}
  This follows from Proposition \ref{prop:13} by applying Remark \ref{rmk:5}.
  Recall also that $\brindices{-}{j,\strip{u}}$ include only branch points along top and bottom boundaries (i.e. not $\pm\infty$).
\end{proof}
%The following lemma comes in handy in some calculations.
%\begin{lemma}
%  \label{lemma:3}
%  Let $\disc{u'}$ be a disc obtained from $\disc{u}$ by changing one incoming point which is a branch point to an outgoing point.
%  Then
%  \begin{displaymath}
%    \mu(\disc{u'})=\mu(\disc{u})+2.
%  \end{displaymath}
%\end{lemma}
%\begin{proof}
%  The Lagrangian subbundles of the two bundle pairs differ by a path of the form $e^{\pi Jt}\cdot \Lambda$, $0\leq t\leq 1$, $\Lambda$ a Lagrangian plane.
%  Since the dimension of $\ff$ is $2$, this means that the difference between the Maslov indices is $2$. 
%\end{proof}

\section{Analytic setup}
\label{sec:analytic-setup}
In this section we describe the Banach manifold structure on the space of discs and strips of a certain Sobolev regularity.
This will allow us, in Section \ref{sec:moduli-spac-holom}, to talk more precisely about holomorphic strips and discs, which are ultimately things we are interested in.

Much of the material in this section is standard so some of the details are skipped (full details will be given in \cite{alston-fciegl}).
Other references are \cite{seidel-fcpclt}, \cite{fooo} and \cite{MR2785840}.
However, since in this paper we are only concerned with Floer cohomology and these other references deal more generally with $A_\infty$-algebras, our point of view is a little different.
Let us explain this difference a little bit by comparing to \cite{seidel-fcpclt}.

In \cite{seidel-fcpclt}, Seidel first constructs the moduli space of discs with marked points, and then a universal bundle over it and then adds extra structure to it (perturbation data and strip like ends).
The holomorphic curves he studies are then the curves that have as domain one of the fibers in the universal bundle.
He describes the compactification of this moduli space, and then explains how transversality can be achieved.

In our setup, since we are only doing Floer cohomology, we only need strips (with no extra marked points).
However to show things are well-defined it is necessary to consider strips with extra marked points.
The reason is that by Gromov's compactness theorem a sequence of strips will degenerate to a (possibly broken) strip, possibly with some disc bubbles attached (a priori at branch points).
We can show that the broken strip with marked points (but not the disc bubbles) generically has the expected dimension, and then by the positivity assumption on the discs, this dimension must be negative and hence cannot exist (see \cite{alston-fciegl}).

Therefore the most important issue for us is to describe the space of strips with marked points, in particular transversality for strips with marked points, and it is towards this that our analytic setup is geared.
We include some details on the analytic setup for marked discs since this will be important for the purpose of describing Gromov compactification.
However, our methods allow us to not concern ourselves with the question of whether or not this space is a smooth manifold.

\subsection{Analytic setup of discs}
Suppose given the following data:
\begin{itemize}
\item An ordered list $\dmp{}=(z_0,\ldots,z_k)\subset\bdy\dd$ of boundary points such that the points are listed in counterclockwise cyclic order.
\item A decomposition $\dmp{}=\dmp{}^-\cup\dmp{}^+$ into incoming and outgoing points.
\item Sets $\brindices{\pm}{}\subset\set{i}{z_i\in\dmp{}^\pm}$ and functions $\brjumps{\pm}{}:\brindices{\pm}{}\to\bjstd$.
\end{itemize}
To simplify notation we abbreviate this data as $\metadmp,\metabrjumps$, and call it \textit{marked point data}.
%From this data, we can extract a list 
%\begin{displaymath}
%  \iolist=(\sigma_0,\ldots,\sigma_k)
%\end{displaymath}
%where
%\begin{displaymath}
%  \sigma_i=\left\{
%    \begin{array}{ll}
%      '-' & z_i\text{ is incoming},\\
%      '+' & z_i\text{ is outgoing.}
%    \end{array}
%\end{displaymath}
%This list encodes which marked points are incoming and which ones are outgoing.

\begin{definition}
  \label{dfn:31}
  A marked disc $\disc{u}=(u,\dmp{\disc{u}},\ell)$ \textit{has marked point data}
  \begin{displaymath}
    \metadmp=(\dmp{},\dmp{}^-,\dmp{}^+),\quad\metabrjumps=(\brjumps{-}{},\brjumps{+}{})
  \end{displaymath}
  if
  \begin{displaymath}
    \dmp{\disc{u}}=\dmp{},\quad \dmp{\disc{u}}^\pm=\dmp{}^\pm,\quad \brjumps{\pm}{\disc{u}}=\brjumps{\pm}{}.
  \end{displaymath}
\end{definition}

Suppose given $\metadmp$ as above.
Let $\dsurf{\metadmp}=\dd\setminus \dmp{}$.
Let $\strn=(-\infty,0)\times[0,1]$ and $\strp=(0,\infty)\times[0,1]$.
A choice of \textit{strip-like ends} for $\dsurf{\metadmp}$ consists of a biholomorphism
\begin{eqnarray*}
  \epsilon^-_i&:&\strn\to U_i^-\subset \dsurf{\metadmp},\text{ or}\\
  \epsilon^+_i&:&\strp\to U_i^+\subset \dsurf{\metadmp}\\
\end{eqnarray*}
for each point $z_i\in\dmp{}^\pm$.\footnote{If $z_i\in\dmp{}^-$ then we are given $\epsilon_i^-$, if $z_i\in\dmp{}^+$ we are given $\epsilon_i^+$.}
Here each $U_i^\pm$ is an open subset of $\dsurf{\metadmp}$ that is obtained by taking an open neighborhood in $\dd$ of $z_i$ and removing $z_i$, and $\epsilon^\pm_i$ is a biholomorphism onto $U_i^\pm$ such that $\lim_{s\to\pm\infty}\epsilon^\pm_i(s,\cdot)=z_i$.

\begin{definition}
  \label{dfn:18}
  Let $\disc{u}$ be a marked disc with marked point data $\metadmp,\metabrjumps$.
  Assume that $u$ is smooth, and constant near $\pm\infty$ on the strip like ends.
  Fix a choice of strip like ends for $\metadmp$, and a metric on $\mnfld$.\footnote{
  Since $\mnfld$ is not compact, $\wkp{\disc{u}}$ will depend on the choice of metric.
  To be explicit, we assume that we choose a metric that agrees with the standard metric coming from the embedding $\mnfld\subset\cc^3$ outside of some compact set.}
  For $\delta>0$ and $p>2$, we let
  \begin{displaymath}
    \wkp{\disc{u}}.
  \end{displaymath}
  denote the set of all sections $\xi$ over $\dsurf{\metadmp}$ of $u^*T\mnfld$ such that
  \begin{itemize}
  \item $\xi$ is in $\wkpcust_{loc}$,
  \item On each strip like end $\epsilon_i^\pm$,
    \begin{displaymath}
      \int_{\strpn}|\xi\circ\epsilon_i^\pm|^pe^{\delta|s|}dsdt<\infty.
    \end{displaymath}
  \item $\xi|\bdy\str$ lies in $\immpf(\ell^*T\lagstd)$.\footnote{What is meant here is that for $z\in\bdy\dd$, $\xi(z)\in \immpf(T_{\ell(z)}S^2)$.}
  \end{itemize}
\end{definition}

\begin{definition}
  \label{dfn:34}
  Given $\disc{u}$, define
  \begin{displaymath}
    \tldisc{\disc{u}}=\bigoplus_{i\notin \brindices{\pm}{\disc{u}} }T_{\ell(z_i)}S^2.
  \end{displaymath}
  Note that $\ell(z_i)$ is well-defined because $z_i$ is not a branch point.
\end{definition}
By the integrability over the strip-like ends and the Sobolev embedding theorem, a vector field $\xi\in\wkp{\disc{u}}$ extends continuously to $\dd$ and vanishes at the marked points.
Also, $\xi|\bdy\dsurf{\metadmp}$ defines a section $\xi_{bdy}$ of $\ell^*T\lagstd$.
The metric on $\mnfld$ induces a metric on $S^2$, and if $\lagstd$ is totally geodesic then exponentiation defines a map from $\wkp{\disc{u}}\oplus \tldisc{\disc{u}}$ to the space of marked discs with marked point data $\metadmp,\metabrjumps$.
The map is
\begin{displaymath}
  (\xi,V)\mapsto \exp_{\disc{u}}(\xi+\tilde V)=\disc{u_{\xi,V}}=(u_{\xi,V},\dmp{\disc{u_{\xi,V}}},\ell_{\xi,V}),
\end{displaymath}
where
\begin{itemize}
\item $V=(V_{i_1},\ldots,V_{i_m})\in\tldisc{\disc{u}}$, 
\item $\tilde V$ is a section of $u^*T\mnfld$ that agrees with parallel translations of $\immpf V_{i_j}$ along rays $t=\text{constant}$ near $\pm\infty$ in strip-like ends, and is $0$ far away from $\pm\infty$ (choosing cutoff functions on the strip-like ends gives a way of constructing $\tilde V$),
\item $u_{\xi,V}=\exp_{u}(\xi+\tilde V)$,
\item $\ell_{\xi,V}=\exp_{\ell}(\xi_{bdy}+\tilde V_{bdy})$, and
\item $\disc{u_{\xi,V}}$ has the same marked point data as $\disc{u}$.
\end{itemize}
We think of these types of marked discs as having regularity $\wkpcust$, and we will always restrict our attention to these types of discs.
\begin{definition}
  \label{dfn:32}
  Define
  \begin{displaymath}
    \spacemd(\metadmp,\metabrjumps)=\bigcup_{\disc{u}}\set{\exp_{\disc{u}}(\xi+\tilde V)}{\xi\in\wkp{\disc{u}},\ V\in \tldisc{\disc{u}}},
  \end{displaymath}
  where the union\footnote{Note that the union is not a disjoint union.} is over all smooth marked discs $\disc{u}$ that are constant near infinity on the strip-like ends.
\end{definition}

Now we want to allow the marked point data $\metadmp$ to vary.
\begin{definition}
  \label{dfn:33}
  Given $k\geq 0$, define $\conf(k+1)$ to be the set of all ordered lists
  \begin{displaymath}
    (z_0,\ldots,z_k)
  \end{displaymath}
  of distinct points in $\bdy\dd$ which are counterclockwise cyclically ordered, along with a labeling of incoming or outgoing for each point in the list.
  $\conf(k+1)$ can be identified with the set of all $\metadmp$ such that the underlying $\dmp{}$ has $k+1$ elements.
  We view $\conf(k+1)$ as a smooth manifold with several components; different labelings of any given list lie in different components.
\end{definition}

\begin{definition}
  \label{dfn:19}
  Given $\metabrjumps$, define
  \begin{displaymath}
    \spacemd_{k+1}(\metabrjumps)=\bigcup_{\metadmp}\spacemd(\metadmp,\metabrjumps).
  \end{displaymath}
  The union is over all $\metadmp\in\conf(k+1)$.
  We assume that $\metabrjumps$ is compatible with $\metadmp$ in the sense that $\metadmp$, $\metabrjumps$ is valid marked point data.
\end{definition}

\begin{proposition}
  \label{prop:12}
  $\spacemd(\metadmp,\metabrjumps)$ has the structure of a smooth Banach manifold such that
  \begin{displaymath}
    T_{\disc{u}}\spacemd(\metadmp,\metabrjumps)\cong \wkp{\disc{u}}\oplus \tldisc{\disc{u}}.
  \end{displaymath}
  $\spacemd_{k+1}(\metabrjumps)$ has the structure of a $C^0$-Banach manifold.
  Locally it is modeled on open neighborhoods of $0$ in the Banach space
  \begin{displaymath}
  T_{\disc{u}}\spacemd(\metadmp,\metabrjumps)\oplus T_{\metadmp{}}\conf(k+1).
  \end{displaymath}
\end{proposition}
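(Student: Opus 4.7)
The plan is to construct local charts via exponential-map parameterizations and to identify the loss of regularity that forces only a $C^0$-Banach structure when the marked points vary. First I would treat the fixed-data case $\spacemd(\metadmp,\metabrjumps)$. After modifying the ambient metric on $\mnfld$ so that each branch of $\lagstd$ is totally geodesic near its image (possible locally around the self-intersection point since the two branches are transverse, by handling each branch separately), the map
\[ \Phi_{\disc{u}}:(\xi,V)\longmapsto \exp_{\disc{u}}(\xi+\tilde V) \]
is well-defined on a small neighborhood of $0$ in $\wkp{\disc{u}}\oplus\tldisc{\disc{u}}$. Because $p>2$ and the exponential weight $e^{\delta|s|}$ forces $\xi$ to decay on each strip-like end, Sobolev embedding yields that $\xi$ extends continuously to $\dd$ and vanishes at the punctures; consequently $u_{\xi,V}$ is continuous with the prescribed asymptotic value at each puncture, and $\ell_{\xi,V}$ extends continuously across non-branch marked points while preserving the branch-jump types at the branch marked points.

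Next I would verify that $\Phi_{\disc{u}}$ is a local homeomorphism. The derivative at $0$ is essentially the identity on $\wkp{\disc{u}}\oplus\tldisc{\disc{u}}$, so the inverse function theorem for Banach spaces applies once one checks that any marked disc $\disc{u'}$ sufficiently close to $\disc{u}$ admits a unique presentation $u'=\exp_u(\eta)$ with $\eta$ of class $W^{1,p}_{loc}$ asymptotic to constant vectors $\immpf V_{i_j}$ at the non-branch punctures and with $\xi:=\eta-\tilde V$ lying in $\wkp{\disc{u}}$. For smoothness of the atlas, the transition between charts centered at $\disc{u_0}$ and $\disc{u_1}$ is given fiberwise by a smooth diffeomorphism of $T\mnfld$, and composition with such a fiberwise map is smooth on sections of regularity $W^{1,p}$ with $p>2$ by the standard $\Omega$-lemma argument. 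This produces the asserted smooth Banach manifold structure with the stated tangent space.

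For $\spacemd_{k+1}(\metabrjumps)$ I would additionally incorporate movement of the marked points. Fixing a base point with data $\metadmp_0$, pick a smooth family of diffeomorphisms $\psi_{\metadmp}:\dsurf{\metadmp_0}\to\dsurf{\metadmp}$ depending smoothly on $\metadmp\in U\subset\conf(k+1)$ and equal to $s$-translation on each strip-like end. The chart sends $(\xi,V,\metadmp)$ to the marked disc whose underlying data is $\metadmp$, whose map is $\exp_{\disc{u}}(\xi+\tilde V)\circ\psi_{\metadmp}^{-1}$, and whose boundary lift is $\ell_{\xi,V}\circ\psi_{\metadmp}^{-1}$. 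The total domain is an open neighborhood of $0$ in $T_{\disc{u}}\spacemd(\metadmp_0,\metabrjumps)\oplus T_{\metadmp_0}\conf(k+1)$, giving the required local model.

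The main obstacle is the dependence of the chart on $\metadmp$: the reparametrization $\xi\mapsto\xi\circ\psi_{\metadmp}^{-1}$ is continuous as a function of $\metadmp$ valued in the relevant Sobolev space, but differentiating in $\metadmp$ produces a derivative that costs one spatial derivative on $\xi$, so the dependence is not differentiable at the Sobolev level. This is precisely why $\spacemd_{k+1}(\metabrjumps)$ inherits only a $C^0$-Banach manifold structure even though each fixed-data stratum is smooth. The other delicate point, which I would handle at the same time, is ensuring that propagating the exponential decay on the strip-like ends under $\psi_{\metadmp}$ is compatible with the weighted Sobolev norms of Definition \ref{dfn:18}; this works precisely because $\psi_{\metadmp}$ is a translation on the ends.
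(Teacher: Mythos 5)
Your argument is correct and follows exactly the route the paper sets up: the paper itself states this proposition without proof (deferring details to \cite{alston-fciegl}), but the exponential-map parameterization $(\xi,V)\mapsto\exp_{\disc{u}}(\xi+\tilde V)$ you use as a chart is precisely the one given in the text before Definition \ref{dfn:32}, and your explanation of the loss of one derivative under reparametrization by the marked points is the standard reason the larger space is only a $C^0$-Banach manifold. No gaps; this matches the intended proof.
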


\subsection{Analytic setup for strips}
Suppose given the following data:
\begin{itemize}
\item Ordered lists $\dmp{i}=(z^i_1,\ldots,z^i_{k_i})\subset\bdy_i\str$ of boundary points such that the points are listed in counterclockwise cyclic order.
\item A decomposition $\dmp{i}=\dmp{i}^-\cup\dmp{i}^+$ into incoming and outgoing points.
\item Sets $\brindices{\pm}{i}\subset\set{i}{z_i\in\dmp{}^\pm}$ and functions $\brjumps{\pm}{i}:\brindices{\pm}{i}\to\bjstd$.
\item Elements $\alpha^{\pm\infty}\in \bjstd\cup\sett{\emptyset}$.
\end{itemize}
To simplify notation we abbreviate this data as $\metadmp,\metabrjumps$, and call it \textit{marked point data for a strip}.
%From this data, we can extract a list 
%\begin{displaymath}
%  \iolist=(\sigma_0,\ldots,\sigma_k)
%\end{displaymath}
%where
%\begin{displaymath}
%  \sigma_i=\left\{
%    \begin{array}{ll}
%      '-' & z_i\text{ is incoming},\\
%      '+' & z_i\text{ is outgoing.}
%    \end{array}
%\end{displaymath}
%This list encodes which marked points are incoming and which ones are outgoing.

\begin{definition}
  \label{dfn:6}
  A marked strip $\strip{u}=(u,\smp{0,\strip{u}},\smp{1,\strip{u}},\ell)$ \textit{has marked point data}
  \begin{displaymath}
    \metadmp=(\smp{0},\smp{1},\smp{0}^\pm,\smp{1}^\pm),\quad\metabrjumps=(\brjumps{\pm}{0},\brjumps{\pm}{1},\brjumps{\pm\infty}{})
  \end{displaymath}
  if
  \begin{displaymath}
    \smp{i,\disc{u}}=\smp{i},\quad \smp{i,\strip{u}}^\pm=\smp{i}^\pm,\quad \brjumps{\pm}{i,\strip{u}}=\brjumps{\pm}{i},\quad \brjumps{\pm\infty}{\disc{u}}=\brjumps{\pm\infty}{}\quad i=0,1.
  \end{displaymath}
\end{definition}

Suppose given $\metadmp$ as above.
Recall that $\ssurf{\metadmp}=\str\setminus (\dmp{0}\cup\dmp{1})$.
Let $\strn=(-\infty,0)\times[0,1]$ and $\strp=(0,\infty)\times[0,1]$.
A choice of \textit{strip-like ends} for $\ssurf{\metadmp}$ consists of a biholomorphism
\begin{eqnarray*}
  \epsilon^-_i&:&\strn\to U_i^-\subset \ssurf{\metadmp},\text{ or}\\
  \epsilon^+_i&:&\strp\to U_i^+\subset \ssurf{\metadmp}\\
\end{eqnarray*}
for each point $z^i_j\in\dmp{i}^\pm$, with the same properties as in the case of discs.
We also view $-\infty$ as an incoming marked point, and $+\infty$ as an outgoing marked point, both with the obvious strip like end structures (i.e.\ coming from $\str$ itself).

\begin{definition}
  \label{dfn:37}
  Let $\strip{u}$ be a marked strip with marked point data $\metadmp,\metabrjumps$.
  Assume that $u$ is smooth, and constant near $\pm\infty$ on the strip like ends.
  Fix a choice of strip like ends for $\metadmp$, and a metric on $\mnfld$.
  For $\delta>0$ and $p>2$, we let
  \begin{displaymath}
    \wkp{\strip{u}}.
  \end{displaymath}
  denote the set of all sections $\xi$ over $\ssurf{\metadmp}$ of $u^*T\mnfld$ such that
  \begin{itemize}
  \item $\xi$ is in $\wkpcust_{loc}$.
  \item On each strip like end $\epsilon_i^\pm$,
    \begin{displaymath}
      \int_{\strpn}|\xi\circ\epsilon_i^\pm|^pe^{\delta|s|}dsdt<\infty.
    \end{displaymath}
  \item For $R$ large enough\footnote{More precisely: $\sett{|s|>R}$ does not contain any marked points.}
    \begin{displaymath}
      \int_{|s|>R}|\xi|^pe^{\delta|s|}dsdt<\infty.
    \end{displaymath}
  \item $\xi|\bdy\ssurf{\metadmp}$ lies in $\immpf(\ell^*T\lagstd)$.
  \end{itemize}
\end{definition}

\begin{definition}
  \label{dfn:36}
  Given $\disc{u}$ a strip, define
  \begin{displaymath}
    \tldisc{\disc{u}}=W^{-\infty}\oplus W^{+\infty}\oplus\bigoplus_{i\notin \brindices{\pm}{0,\disc{u}} }T_{\ell(z^0_i)}S^2\oplus\bigoplus_{i\notin \brindices{\pm}{1,\disc{u}} }T_{\ell(z^1_i)}S^2,
  \end{displaymath}
  where
  \begin{displaymath}
    W^{\pm\infty}=\left\{
      \begin{array}{ll}
        \sett{0} & \brjumps{\pm\infty}{\strip{u}}\in\bjstd,\\
        T_{\ell(\pm\infty)}S^2 & \brjumps{\pm\infty}{\strip{u}}=\emptyset.
      \end{array}\right.
  \end{displaymath}
\end{definition}

As before, exponentiation defines a map from $\wkp{\strip{u}}\oplus \tldisc{\strip{u}}$ to the space of marked strips with marked point data $\metadmp,\metabrjumps$.
The map is
\begin{displaymath}
  (\xi,V)\mapsto \exp_{\disc{u}}(\xi+\tilde V)=\strip{u_{\xi,V}}=(u_{\xi,V},\dmp{\disc{u_{\xi,V}}},\ell_{\xi,V}),
\end{displaymath}
where
\begin{itemize}
\item $V=(V_{-\infty},V_{+\infty},V^0_{i_1},\ldots,V^1_{i_m})\in\tldisc{\disc{u}}$, 
\item $\tilde V$ is a section of $u^*T\mnfld$ that agrees with parallel translations of $\immpf V_{i_j}$ along rays $t=\text{constant}$ near $\pm\infty$ in strip-like ends, and is $0$ far away from $\pm\infty$ (choosing cutoff functions on the strip-like ends gives a way of constructing $\tilde V$),
\item $u_{\xi,V}=\exp_{u}(\xi+\tilde V)$,
\item $\ell_{\xi,V}=\exp_{\ell}(\xi_{bdy}+\tilde V_{bdy})$, and
\item $\disc{u_{\xi,V}}$ has the same marked point data as $\disc{u}$.
\end{itemize}

\begin{definition}
  \label{dfn:1}
  Define
  \begin{displaymath}
    \spacemd(\metadmp,\metabrjumps)=\bigcup_{\disc{u}}\set{\exp_{\disc{u}}(\xi+\tilde V)}{\xi\in\wkp{\disc{u}},\ V\in \tldisc{\disc{u}}},
  \end{displaymath}
  where the union is over all smooth marked strips $\disc{u}$ that are constant near infinity on the strip-like ends.
\end{definition}

\begin{definition}
  \label{dfn:11}
  Given $k_0,k_1\geq 0$, define $\conf(k_0,k_1)$ to be the set of all pairs of ordered lists
  \begin{displaymath}
    (z^0_1,\ldots,z^0_{k_0}),\ (z^1_1,\ldots,z^1_{k_1})
  \end{displaymath}
  of distinct points in $\bdy_0\str,\bdy_1\str$ which are counterclockwise cyclically ordered, along with a labeling of incoming or outgoing for each point in the list.
  $\conf(k_0,k_1)$ can be identified with the set of all $\metadmp$ such that the underlying $\dmp{i}$ have $k_i$ elements.
  We view $\conf(k_0,k_1)$ as a smooth manifold with several components; different labelings of any given list lie in different components.
\end{definition}

\begin{definition}
  \label{dfn:14}
  Given $\metabrjumps$, define
  \begin{displaymath}
    \spacemd_{k_0,k_1}(\metabrjumps)=\bigcup_{\metadmp}\spacemd(\metadmp,\metabrjumps).
  \end{displaymath}
  The union is over all $\metadmp\in\conf(k_0,k_1)$.
  We assume that $\metabrjumps$ is compatible with $\metadmp$ in the sense that $\metadmp$, $\metabrjumps$ is valid marked point data.
\end{definition}

\begin{proposition}
  \label{prop:2}
  $\spacemd(\metadmp,\metabrjumps)$ has the structure of a smooth Banach manifold such that
  \begin{displaymath}
    T_{\disc{u}}\spacemd(\metadmp,\metabrjumps)\cong \wkp{\disc{u}}\oplus \tldisc{\disc{u}}.
  \end{displaymath}
  $\spacemd_{k_0,k_1}(\metabrjumps)$ has the structure of a $C^0$-Banach manifold.
  Locally it is modeled on the Banach space
  \begin{displaymath}
    T_{\disc{u}}\spacemd(\metadmp,\metabrjumps)\oplus T_{\metadmp{}}\conf(k_0,k_1).
  \end{displaymath}
\end{proposition}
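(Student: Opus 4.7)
The plan is to run the same construction as in Proposition \ref{prop:12} for discs (whose proof the paper presumably hands to a reference), but with the two extra asymptotic marked points at $\pm\infty$ treated on the same footing as interior marked boundary points. First I would fix a smooth representative $\strip{u}$ with marked point data $\metadmp, \metabrjumps$ that is constant on its strip-like ends (including near $\pm\infty$), and argue that the exponential map
\begin{displaymath}
    \Phi_{\strip{u}} : (\xi, V) \longmapsto \exp_{\strip{u}}(\xi + \tilde V)
\end{displaymath}
gives a homeomorphism from a neighborhood of $0$ in $\wkp{\strip{u}} \oplus \tldisc{\strip{u}}$ onto a neighborhood of $\strip{u}$ in $\spacemd(\metadmp, \metabrjumps)$. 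The standard ingredients are: the exponential weight $e^{\delta|s|}$ in Definition \ref{dfn:37} guarantees that any $\xi$ extends continuously across each puncture with value $0$ (by the weighted Sobolev embedding for $p > 2$), so the finite-dimensional summand $\tldisc{\strip{u}}$ is exactly what is needed to recover the full asymptotic freedom at the non-branch punctures. The key subtlety compared to the disc case is the role of $W^{\pm\infty}$: if $\brjumps{\pm\infty}{\strip{u}} \in \bjstd$ the asymptotic limit is a fixed self-intersection point and $W^{\pm\infty} = \sett{0}$, while if there is no branch jump at $\pm\infty$ the limit is free to vary along $\lagstd$ near a smooth point.

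Next I would verify smoothness of the transition maps $\Phi_{\strip{u}_2}^{-1} \circ \Phi_{\strip{u}_1}$ between two such charts sharing $\metadmp, \metabrjumps$. This reduces to the smoothness of composition with the pointwise exponential map in weighted Sobolev spaces, which is standard once $p > 2$ so that $\wkpcust_{\mathrm{loc}} \hookrightarrow C^0$. This gives the smooth Banach manifold structure on $\spacemd(\metadmp, \metabrjumps)$ with the asserted tangent space.

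To allow $\metadmp$ to vary and thereby build $\spacemd_{k_0, k_1}(\metabrjumps)$, I would fix a reference $\metadmp_0 \in \conf(k_0, k_1)$ and, for each nearby $\metadmp$ in the same component, choose a smooth family of diffeomorphisms $\phi_{\metadmp} : \ssurf{\metadmp_0} \to \ssurf{\metadmp}$ that are the identity outside small neighborhoods of the marked points and conformal on the strip-like ends. Pullback by $\phi_{\metadmp}$ identifies the fibers, and local charts are then obtained by pairing the chart on $\spacemd(\metadmp_0, \metabrjumps)$ with a local parameter on $\conf(k_0, k_1)$. As usual, because $\phi_{\metadmp}$ depends only continuously (not smoothly) on $\metadmp$ in the Sobolev topology on the source, the resulting total space is merely a $C^0$-Banach manifold modeled on $T_{\strip{u}}\spacemd(\metadmp, \metabrjumps) \oplus T_{\metadmp} \conf(k_0, k_1)$.

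The main obstacle will be correctly matching the finite-dimensional factor $\tldisc{\strip{u}}$ to the weighted Sobolev space at the two ends $\pm\infty$; in particular I need that the weight $\delta > 0$ is chosen smaller than the smallest positive eigenvalue of the relevant asymptotic operator at each puncture (branch or smooth), so that the linearized Cauchy--Riemann operator has the correct Fredholm index and so that the splitting of asymptotic behavior into an exponentially decaying part (captured by $\wkp{\strip{u}}$) and a constant part (captured by $W^{\pm\infty}$ or $T_{\ell(z^i_j)}S^2$) is genuinely transverse. Once this weight is chosen, the local surjectivity of $\Phi_{\strip{u}}$ follows from the implicit function theorem in the usual way, and the rest of the proof is the parallel of the disc case handled in Proposition \ref{prop:12}.
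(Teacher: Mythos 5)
Your proposal is correct and follows the same route the paper takes: the exponential-map charts $(\xi,V)\mapsto\exp_{\strip{u}}(\xi+\tilde V)$ with the finite-dimensional summand $\tldisc{\strip{u}}$ accounting for asymptotic freedom at non-branch punctures, and the merely continuous dependence of the reparametrizations on $\metadmp$ explaining why $\spacemd_{k_0,k_1}(\metabrjumps)$ is only a $C^0$-Banach manifold. The paper itself states this proposition without proof, deferring the details to \cite{alston-fciegl}, but the chart construction described in the surrounding text is exactly the one you outline.
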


\section{Moduli spaces of holomorphic curves}
\label{sec:moduli-spac-holom}
\subsection{Holomorphic discs}
Let $J$ be a compatible complex structure which agrees with $\acs$ outside of a compact set.
\begin{definition}
  \label{dfn:38}
  For $k\geq 0$, let
  \begin{displaymath}
    \parmodmdstd
  \end{displaymath}
  denote the set of all pairs $(\disc{u},\metadmp)$ such that $\disc{u}\in\spacemd(\metadmp,\metabrjumps)$ and $u$ satisfies
  \begin{displaymath}
    du+J\circ du\circ j_\dd=0,
  \end{displaymath}
  and $|\metadmp|=k$ (i.e.\ there are $k$ marked points).
  In case $k\leq 2$ we require that $\disc{u}$ has non-constant $u$.
  Let
  \begin{displaymath}
    \modmdstd=\parmodmdstd/\Aut(\dd,\bdy\dd).
  \end{displaymath}
  These spaces have natural evaluation maps
  \begin{displaymath}
    \ev_j:\modmdstd\to S^2\amalg\bjstd.
  \end{displaymath}
\end{definition}

We now describe the Fredholm theory of these moduli spaces.
Let 
\begin{displaymath}
  \bbundle(\metadmp,\metabrjumps)\to\spacemd(\metadmp,\metabrjumps)
\end{displaymath}
be the smooth Banach bundle over $\spacemd(\metadmp,\metabrjumps)$ whose fiber over $\disc{u}$,
\begin{displaymath}
  \bbundle(\metadmp,\metabrjumps)_{\disc u}=\lp(\Lambda^{0,1}\otimes\disc{u}^*TM),
\end{displaymath}
consists of all $L^p_{loc}$ sections that are also in $L^p$ over the strip-like ends.
This bundle comes equipped with the smooth section
\begin{displaymath}
  \dbar_{\metadmp,\metabrjumps,J}:\spacemd(\metadmp,\metabrjumps)\to\bbundle(\metadmp,\metabrjumps),\quad u\mapsto du+J\circ du \circ j_{\dd}.
\end{displaymath}

Likewise, let
\begin{displaymath}
  \bbundle_k(\metabrjumps)\to\spacemd_k(\metabrjumps)
\end{displaymath}
be the $C^0$-Banach bundle over $\spacemd_k(\metabrjumps)$ defined in a similar way.
It comes equipped with the continuous section
\begin{displaymath}
  \dbar_{k,\metabrjumps,J}:\spacemd_k(\metabrjumps)\to\bbundle_k(\metabrjumps).
\end{displaymath}
Restricted to the local coordinate charts referenced in Proposition \ref{prop:12}, the Banach bundles have trivializations in which the section $\dbar_{k,\metabrjumps}$ becomes smooth.

\begin{proposition}
  \label{prop:15}
  The moduli space of (parameterized) holomorphic discs equals the zero set of the section $\dbar_{k,\metabrjumps,J}$,
  \begin{displaymath}
   \parmodmdstd=\dbar_{k,\metabrjumps,J}^{-1}(0) .
  \end{displaymath}
\end{proposition}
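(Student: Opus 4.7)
The statement is essentially a bookkeeping identity that unfolds the definitions, so the plan is to match both sides rather than carry out any substantive analysis. First I would observe that by Definition \ref{dfn:38}, an element of $\parmodmdstd$ is a pair $(\disc{u},\metadmp)$ with $\disc{u}\in\spacemd(\metadmp,\metabrjumps)$, $|\metadmp|=k$, and underlying map $u$ satisfying $du+J\circ du\circ j_\dd=0$. Meanwhile, by Definition \ref{dfn:19}, $\spacemd_k(\metabrjumps)$ is the union of the spaces $\spacemd(\metadmp,\metabrjumps)$ over all valid $\metadmp$, and the section $\dbar_{k,\metabrjumps,J}$ is defined pointwise by $\disc{u}\mapsto du+J\circ du\circ j_\dd$. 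Taking the zero set therefore picks out exactly those $\disc{u}$ on which the Cauchy--Riemann equation holds, matching the two descriptions.

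Second, I would verify the minor technical points needed to make this identification rigorous. For $\disc{u}\in\spacemd(\metadmp,\metabrjumps)$, which has regularity $\wkpcust_{loc}$ with exponential weighted decay on the strip-like ends (Definition \ref{dfn:18}), the expression $du+J\circ du\circ j_\dd$ is indeed a well-defined element of the fiber $\lp(\Lambda^{0,1}\otimes u^*TM)$, so $\dbar_{k,\metabrjumps,J}$ lands in the correct Banach bundle. When $\metadmp$ varies across the strata of $\conf(k+1)$, the assignment is continuous with respect to the $C^0$-topology coming from the local trivializations referenced in Proposition \ref{prop:12}, and on each fixed stratum the section is smooth in the Banach-manifold sense. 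Elliptic regularity for $\dbar$ ensures that $\wkpcust_{loc}$ solutions of the Cauchy--Riemann equation are smooth in the interior, so no regularity is lost on the zero locus.

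Finally, I would address the one small asymmetry in the definitions: $\parmodmdstd$ excludes constant maps when $k\leq 2$ (Definition \ref{dfn:38}), whereas the zero set of $\dbar_{k,\metabrjumps,J}$ does not exclude them a priori. The correct reading of the statement is that $\parmodmdstd$ equals the zero set after removing the (topologically isolated) locus of constant maps in the low-$k$ strata. I would note this at the outset and then proceed with the identification as above. Since no genuine analytic content is needed, there is no real obstacle here; the only thing to be careful about is not conflating the Banach-manifold structure of each stratum $\spacemd(\metadmp,\metabrjumps)$ with the weaker $C^0$-Banach-manifold structure of $\spacemd_k(\metabrjumps)$ when asserting that the zero set has the expected local model.
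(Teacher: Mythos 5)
Your argument is correct and coincides with the paper's treatment: Proposition \ref{prop:15} is stated there without proof precisely because, as you observe, both sides are cut out by the same Cauchy--Riemann equation on the same underlying space, so the identity is definitional. Your remark about the exclusion of constant maps when $k\leq 2$ is a legitimate fine point that the paper's statement glosses over, but it does not affect the substance.
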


\begin{proposition}
  \label{prop:17}
  Let $\delta>0$ be sufficiently small.
  For $(\disc{u},\metadmp)\in\parmodmdstd$, local trivializations of the Banach manifold and bundle can be chosen so that $\dbar_{k,\metabrjumps}$ is smooth and the linearization
  \begin{displaymath}
    D_{\disc{u},\metadmp}\dbar_{k,\metabrjumps,J}:\wkp{\disc{u}}\oplus \tldisc{\disc{u}} \oplus T_{\metadmp}\conf(k)\to \lp(\Lambda^{0,1}\otimes\disc{u}^*T\mnfld)
  \end{displaymath}
  is Fredholm.
  The index is
  \begin{eqnarray*}
    \ind D_{\disc{u},\metadmp}\dbar_{k,\metabrjumps,J} &=& \mu(\disc{u})+2(1-\brindices{+}{\disc{u}})+k\\
    &=&\sum_{i\in\brindices{-}{\disc{u}}}\ind\brjumps{-}{\disc{u}}(i)-\sum_{i\in\brindices{+}{\disc{u}}}\ind\brjumps{+}{\disc{u}}(i)+2(1-\brindices{-}{\disc{u}})+k.
  \end{eqnarray*}
\end{proposition}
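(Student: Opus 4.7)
The proposition makes three assertions: that $\dbar_{k,\metabrjumps,J}$ is smooth in appropriate local coordinates, that its linearization is Fredholm, and that the index is as stated. I would handle the first two by standard Cauchy--Riemann machinery and devote most of the argument to the index calculation.

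For smoothness, I would take the exponential-map chart of Definition \ref{dfn:18} on $\spacemd(\metadmp,\metabrjumps)$ together with the obvious charts on the finite-dimensional summands $\tldisc{\disc{u}}$ and $\conf(k)$, and trivialize the Banach bundle $\bbundle(\metadmp,\metabrjumps)$ by parallel transport along geodesics. In these coordinates $\dbar$ becomes a quasilinear first-order expression in $(\xi,V,\metadmp)$ whose smoothness is routine (compare \cite{seidel-fcpclt}, Chapters 8--9). For the Fredholm property, the linearization is a compact perturbation of a weighted Cauchy--Riemann operator on $\dsurf{\metadmp}$ with Lagrangian boundary condition $\immpf\ell^*TS^2$. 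At each strip-like end the asymptotic operator is a self-adjoint first-order operator on paths in $T\mnfld|_{[0,1]}$ whose limiting Lagrangians either coincide (non-branch point) or are the two branches $\immpf T_pS^2$, $\immpf T_qS^2$ of $\lagstd$ at the self-intersection (branch point). Choosing $\delta>0$ sufficiently small shifts the spectra of all asymptotic operators off zero, and standard Lockhart--McOwen / Schwarz theory then yields the Fredholm property.

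For the index, I would argue by additivity, starting from a reference operator of known index and adding marked points one at a time. The natural reference is the unweighted, unmarked $\dbar$ on the closed disc with boundary condition $\immpf\ell^*TS^2$, for which the Riemann--Roch formula on surfaces with boundary matches the claimed formula in the $k=0$ case. The change in index on adding a marked point decomposes into four pieces: (i) introducing a strip-like end with weight $e^{\delta|s|}$; (ii) at a branch point, jumping the bundle pair across the marked point by the positive/negative definite path convention of Section \ref{sec:maslov-index-theory}; (iii) at a non-branch point, enlarging the domain by the summand $T_{\ell(z_i)}S^2\subset\tldisc{\disc{u}}$; (iv) allowing the marked point to move on $\bdy\dd$, which contributes one dimension from $T_{\metadmp}\conf(k)$. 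At a non-branch point, (i) and (iii) cancel: the weight removes the constant asymptotic mode that the enlarged domain restores, so the net contribution is $+1$ from (iv). At a branch point, effects (i) and (ii) combine by a spectral-flow computation to reproduce the $\pm\ind\brjumps{\pm}{\disc{u}}(i)$ terms of Proposition \ref{prop:13}, plus $+1$ from (iv). Summing over all marked points gives the stated formula.

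The main technical obstacle is verifying the cancellation at non-branch marked points, i.e.\ identifying the extended domain $\wkp{\disc{u}}\oplus T_{\ell(z_i)}S^2$ at such an end with the natural unweighted $W^{1,p}$-space in which the marked point is an interior point of a smooth boundary arc. Concretely, one promotes each $V\in T_{\ell(z_i)}S^2$ via a cut-off on the strip-like end to a section that equals the parallel transport of $\immpf V$ near $\pm\infty$ and vanishes outside the strip-like neighborhood; this glues smoothly with any $\xi\in\wkp{\disc{u}}$ (which already vanishes at the puncture by Sobolev embedding) and makes the cancellation manifest. The secondary subtlety is matching signs at branch points between the weighted spectral-flow contribution and the definite-path convention used to define $\mu(\disc{u})$, which is best verified by a direct model computation at a single branch strip-like end with limiting Lagrangians $\immpf T_pS^2,\immpf T_qS^2$.
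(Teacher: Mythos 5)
The paper itself offers no proof of this proposition---it is stated as standard, with details deferred to \cite{alston-fciegl} and to \cite{seidel-fcpclt}---so there is nothing in the text to compare your argument against line by line. On its own terms your plan is the standard and correct route: exponential charts with parallel-transport trivializations for smoothness, Lockhart--McOwen theory with a small exponential weight for the Fredholm property, and the index by comparing the weighted punctured operator with the capped-off bundle pair $(E,F)$ of Definition \ref{dfn:8} via linear gluing at each strip-like end. You also correctly isolate the two points where the argument could go wrong: the cancellation at non-branch punctures between the weight (which kills the two-dimensional space of asymptotically constant modes) and the summand $T_{\ell(z_i)}S^2\subset\tldisc{\disc{u}}$ (which restores them), and the fact that the spectral flow at a branch puncture reproduces $\mp\ind\brjumps{\pm}{\disc{u}}(i)$ precisely because Definition \ref{dfn:8} caps off branch points with the same positive/negative definite paths used to define $\ind(p,q)$. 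One concrete warning for the bookkeeping stage: the two displayed lines of the statement are not actually equivalent---substituting Proposition \ref{prop:13} into the second line gives $\mu(\disc{u})+k$, which differs from the first line by $2(1-|\brindices{+}{\disc{u}}|)$---and the paper's own downstream uses (the dimension count following Proposition \ref{prop:4} and the footnote in the regularity argument of Section \ref{sec:class-holom-curv}) do not apply a single consistent normalization of the ``$+k$'' term either. So you cannot prove the statement exactly as written; your additivity argument, carried out carefully (each marked point contributes exactly $+1$ through $T_{\metadmp}\conf(k)$, each branch end contributes $\mp\ind$ through its asymptotic operator), is precisely the tool that will determine the correct constants, and you should treat the stated formula as a target to be corrected if necessary rather than as ground truth.
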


\subsection{Holomorphic strips}
The story for holomorphic strips is very similar; the main difference is that we consider a $t$-dependent complex structure
\begin{displaymath}
  \J=\jstd.
\end{displaymath}

\begin{definition}
\label{dfn:39}
For $k_0,k_1\geq0$, let
\begin{displaymath}
  \parmodmsstd
\end{displaymath}
denote the set of all pairs $(\disc{u},\metadmp)$ such that $\disc{u}\in\spacemd(\metadmp,\metabrjumps)$, $\metadmp\in\conf(k_0,k_1)$ and
\begin{displaymath}
  du+J_t\circ du\circ j_\str=0.
\end{displaymath}
In case $k_0=k_1=0$, we require that $\disc{u}$ has non-constant $u$.
Also, define
\begin{displaymath}
  \modmsstd=\parmodmsstd/\rr.
\end{displaymath}
These spaces have natural evaluation maps
\begin{displaymath}
  \ev_j,\ev_{\pm\infty}:\modmsstd\to S^2\amalg\bjstd.
\end{displaymath}
\end{definition}

Banach bundles and $\dbar$-sections can be constructed in the same way as for discs.
This leads to smooth bundle
\begin{displaymath}
  \bbundle(\metadmp,\metabrjumps)\to\spacemd(\metadmp,\metabrjumps)
\end{displaymath}
and smooth section
\begin{displaymath}
  \dbar_{\metadmp,\metabrjumps,\J}:\spacemd(\metadmp,\metabrjumps)\to\bbundle(\metadmp,\metabrjumps),\quad u\mapsto du+J_t\circ du \circ j_{\str}.
\end{displaymath}
Also there is a $C^0$-bundle
\begin{displaymath}
  \bbundle_{k_0,k_1}(\metabrjumps)\to\spacemd_{k_0,k_1}(\metabrjumps)
\end{displaymath}
and $C^0$-section
\begin{displaymath}
  \dbar_{k_0,k_1,\metabrjumps,\J}:\spacemd_{k_0,k_1}(\metabrjumps)\to\bbundle_{k_0,k_1}(\metabrjumps).
\end{displaymath}

The analog of Proposition \ref{prop:17} is
\begin{proposition}
  \label{prop:16}
  Let $\delta>0$ be sufficiently small.
  For $(\disc{u},\metadmp)\in\parmodmsstd$, local trivializations of the Banach manifold and bundle can be chosen so that $\dbar_{k_0,k_1,\metabrjumps,\J}$ is smooth and the linearization
  \begin{displaymath}
    D_{\disc{u},\metadmp}\dbar_{k_0,k_1,\metabrjumps,\J}:\wkp{\disc{u}}\oplus \tldisc{\disc{u}} \oplus T_{\metadmp}\conf(k_0,k_1)\to \lp(\Lambda^{0,1}_{\ssurf{\metadmp}}\otimes\disc{u}^*TM)
  \end{displaymath}
  is Fredholm.
  The index is
  \begin{eqnarray*}
    \ind D_{\disc{u},\metadmp}\dbar_{k_0,k_1,\metabrjumps,\J} &=& \mu(\disc{u})-2|\brindices{+}{\disc{u}}|+k_0+k_1\\
    &=&\ind\brjumps{-\infty}{\strip{u}}-\ind\brjumps{+\infty}{\strip{u}}\\
    &&+\sum_{j=0,1;\ i\in\brindices{-}{j,\disc{u}}} \ind \brjumps{-}{j,\disc{u}}(i)-\sum_{j=0,1;\ i\in\brindices{+}{j,\disc{u}}}\ind\brjumps{+}{j,\disc{u}}(i)\\
    &&+2(1-\delta-|\brindices{-}{\strip{u}}|)+k_0+k_1.
  \end{eqnarray*}
  Here $\delta=1$ if $-\infty$ is a branch point, otherwise $\delta=0$.
\end{proposition}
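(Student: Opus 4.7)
The plan is to prove Proposition~\ref{prop:16} by direct analogy with Proposition~\ref{prop:17} for discs, reducing to the disc case via Remark~\ref{rmk:5} in close parallel to how Proposition~\ref{prop:14} follows from Proposition~\ref{prop:13}. Via the biholomorphism $\phi:\dd\setminus\{-1,1\}\to\str$, a marked strip $\strip{u}$ with marked point data $\metadmp,\metabrjumps$ is naturally viewed as a marked disc whose boundary marked points are $\dmp{0}\cup\dmp{1}\cup\{-\infty,+\infty\}$. The structural differences with the disc setting are that (i) the positions of $\pm\infty$ are fixed rather than free to vary in $\conf$; (ii) the complex structure $\J=(J_t)$ depends on $t$; and (iii) each of the ends $\pm\infty$ may or may not be a branch jump, which is recorded in $\tldisc{\strip{u}}$ by the summands $W^{\pm\infty}$ collapsing to $0$ or remaining as $T_{\ell(\pm\infty)}S^2$.

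First I would fix local trivializations of $u^*T\mnfld$ adapted to the $t$-dependent structure $J_t$, in which the section $\dbar_{k_0,k_1,\metabrjumps,\J}$ becomes smooth; this is identical to the argument for discs, since $t$-dependence contributes only bounded lower-order terms to the Cauchy--Riemann operator. The linearization on $\wkp{\strip{u}}\oplus\tldisc{\strip{u}}\oplus T_{\metadmp}\conf(k_0,k_1)$ then splits as a weighted Cauchy--Riemann operator plus a finite-rank map from $\tldisc{\strip{u}}\oplus T_{\metadmp}\conf(k_0,k_1)$ that shifts the asymptotic Lagrangian boundary condition at each non-branch end and reparametrizes interior marked points. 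Fredholmness on the weighted Sobolev space, for $\delta>0$ smaller than the spectral gap of every asymptotic operator, is standard weighted elliptic theory (Chapter~11 of~\cite{seidel-fcpclt} or~\cite{fooo}): at a branch end the two asymptotic Lagrangian planes are transverse and the asymptotic operator has a uniform spectral gap, while at a non-branch end the asymptotic boundary conditions coincide and the summand $T_{\ell(\cdot)}S^2\subset\tldisc{\strip{u}}$ is precisely what absorbs the resulting two-dimensional kernel. Finite-rank additions preserve Fredholmness.

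The index computation then follows by applying Proposition~\ref{prop:17} to $\strip{u}$ viewed as a disc with $k_0+k_1+2$ marked points, accounting for the fact that $\pm\infty$ are fixed (so $T_{\metadmp}\conf$ contributes only $k_0+k_1$ dimensions rather than $k_0+k_1+2$), and finally expanding $\mu(\disc{u})$ via Proposition~\ref{prop:14}. The $\pm\infty$ ends enter $|\brindices{\pm}{\disc{u}}|$ in the disc picture exactly when $\brjumps{\pm\infty}{\strip{u}}\in\bjstd$; the same dichotomy is tracked by the shift $\delta$ in Proposition~\ref{prop:14}, since a branch jump at $-\infty$ is bookkept by $\ind\brjumps{-\infty}{\strip{u}}$ and simultaneously removes one unit from the $\chi$-like term $2(1-\delta-|\brindices{-}{\strip{u}}|)$. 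Regrouping terms yields the two formulas stated in Proposition~\ref{prop:16}. The main---and essentially only---obstacle is careful bookkeeping of each of the asymptotic ends in the two possible regimes; no analysis beyond that already used in Propositions~\ref{prop:12} and~\ref{prop:17} is required.
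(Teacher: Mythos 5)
Your proposal is correct and follows essentially the route the paper intends: the paper gives no proof of Proposition~\ref{prop:16}, presenting it only as ``the analog of Proposition~\ref{prop:17}'' with details deferred to \cite{alston-fciegl}, and your reduction via Remark~\ref{rmk:5} combined with standard weighted Fredholm theory at the ends (spectral gap at branch ends, the $W^{\pm\infty}$ summands absorbing the kernel at non-branch ends) and the bookkeeping through Propositions~\ref{prop:17} and~\ref{prop:14} is exactly that analogy. One caveat: once Proposition~\ref{prop:14} is substituted, the two displayed index formulas in the statement differ by $-2|\brindices{+}{\disc{u}}|$, so your final ``regrouping'' step cannot literally produce both; the second form is the one consistent with the dimension counts used later (e.g.\ the three-dimensional parameterized strip space underlying Lemma~\ref{lemma:1}), and this discrepancy is a typo in the statement (inherited from Proposition~\ref{prop:17}) rather than a defect of your argument.
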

    %&=&\sum_{i\in\brindices{-}{\disc{u}}}\ind\brjumps{-}{\disc{u}}(i)-\sum_{i\in\brindices{+}{\disc{u}}}\ind\brjumps{+}{\disc{u}}(i)+k_0+k_1.

\section{Classifying holomorphic curves}
\label{sec:class-holom-curv}
In this section we classify all holomorphic discs and strips in $\mnfld$ with boundary on $\lagstd$.
The complex structure we use is the standard one $\acs$.
We also prove that all the discs and strips are regular.

\subsection{Classifying discs}
Consider the Lefschetz fibration
\begin{displaymath}
  \leffib:\mnfld\to\cc,\quad (\za,\zb,\zc)\mapsto \zc.
\end{displaymath}
The key fact is the following:
If $u=(f,g,h)$ is a holomorphic disc with boundary on $\lagstd$, then $\pi\circ u=h$ is a holomorphic disc with boundary on $\pi(\lagstd)=\sett{z=r}$.
By the maximum principle, if $u$ has a branch jump at some marked point, and the marked point is outgoing, then the branch jump must be of type $(\p,\q)$.\footnote{If the marked point is incoming then the type must be $(\q,\p)$.}
See Definition \ref{dfn:2} and the discussion at the end of Section \ref{sec:lefsch-fibr-persp}.

We first consider discs on $\lag{N,1}\subset\mnfldspec{N}$.
By exactness, there are no non-constant discs without branch jumps.
Suppose that $\disc{u}=(u,\dmp{\disc{u}},\ell)$ is a marked disc with
\begin{displaymath}
  \dmp{\disc{u}}=\dmp{\disc{u}}^+=(z_0,\ldots,z_{k}),\quad \brjumps{+}{\disc{u}}(i)=(\p,\q)\ \forall i.
\end{displaymath}

\begin{lemma}
  \label{lemma:13}
  For $\disc{u}=(u,\dmp{\disc{u}},\ell)$ as above, $u$ must be of the form
  \begin{displaymath}
    u:z\mapsto (e^{i\theta}\xi,e^{-i\theta}\xi,h)
  \end{displaymath}
  where
  \begin{displaymath}
    \xi=\sqrt{h-1}\sqrt{h-2}\cdots\sqrt{h-N}
  \end{displaymath}
  and $h:\dd\to\dd$ is a Blaschke product of the form
  \begin{displaymath}
    h:z\mapsto \lambda\prod_{j=0}^{k} \frac{z-\alpha_j}{\bar\alpha_j z -1}
  \end{displaymath}
  with $|\lambda|=1$, $|\alpha_j|<1$ and
  \begin{displaymath}
    h^{-1}(1)=\sett{z_0,\ldots,z_{k}}.
  \end{displaymath}
  To be precise, we define the square root function by taking a branch cut along the positive real axis and letting $\sqrt{-1}=i$.
\end{lemma}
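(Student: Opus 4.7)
My plan is to use the Lefschetz fibration $\leffib$ to reduce the classification to holomorphic maps into the base $\cc$, and then reconstruct the fiber coordinates $f=\za\circ u$, $g=\zb\circ u$. First I would post-compose with $\leffib$ and study $h:=\zc\circ u$. Since $\leffib(\lagstd)=\{|\zc|=1\}$, $h$ is holomorphic on $\dd$ with $|h|=1$ on $\partial\dd$. The constant case is ruled out: $h\equiv c_0$ would trap $u$ in a single Lefschetz fiber (either a smooth $\cc^*$-fiber, where the maximum modulus principle applied to $u$ and its reciprocal on the Lagrangian circle $\{|\za|=|\zb|\}$ forces $u$ constant, or the nodal fiber over $\zc=1$, where $\lagstd$ reduces to a point), both incompatible with a branch jump. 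Thus $h$ is a finite Blaschke product $\lambda\prod_{j=0}^m(z-\alpha_j)/(\bar\alpha_j z - 1)$. Since the self-intersection of $\lagstd$ sits over $\zc=1$, each $z_i$ satisfies $h(z_i)=1$; conversely, the maximum principle yields $h^{-1}(1)\subset\partial\dd$. Because the boundary restriction of a degree-$d$ finite Blaschke product is a smooth $d$-fold covering of $S^1$, $|h^{-1}(1)|=d$, forcing $m+1=d=k+1$ and $h^{-1}(1)=\{z_0,\ldots,z_k\}$.

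To recover $f$ and $g$, I would use the ambient equation $fg=\prod_{j=1}^N(h-j)$. Each factor $h-j$ takes values in the half-plane $\{\mathrm{Re}(w)\leq 1-j\leq 0\}$ on the interior of $\dd$ and is nonvanishing there, so with the prescribed branch of square root $\xi:=\prod_{j=1}^N\sqrt{h-j}$ is holomorphic and nowhere zero on the interior, with $\xi^2=fg$. Setting $v=f/\xi$ (so $g/\xi=1/v$), $v$ is holomorphic and nonvanishing on the interior, while the Lagrangian condition $\hamone=0$ gives $|f|=|g|$, hence $|v|=1$, on $\partial\dd\setminus\{z_0,\ldots,z_k\}$. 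A local analysis near each $z_i$, using that $h-1$ vanishes to first order there (the boundary map of a finite Blaschke product has no critical points), shows that $|f|$ and $|\xi|$ both vanish at rate $|z-z_i|^{1/2}$, so $v$ extends continuously to $\overline{\dd}$ with $|v(z_i)|=1$. The maximum modulus principle applied to $v$ and to $1/v$ then forces $|v|\equiv 1$, so $v\equiv e^{i\theta}$ for some $\theta\in\rr$, giving $f=e^{i\theta}\xi$ and $g=e^{-i\theta}\xi$.

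The main obstacle is the last step: verifying that $v=f/\xi$ extends continuously to $\overline{\dd}$ with modulus one across each branch point $z_i$. Both $f$ and $\xi$ exhibit $|z-z_i|^{1/2}$-type behavior there, and one must compare their local square-root branches and ensure that $v$ remains bounded from the interior (not merely along the boundary) before the maximum modulus principle can be invoked. Once this matching is in place, the remainder of the argument is classical Blaschke-product theory.
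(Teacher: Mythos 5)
Your overall strategy is the same as the paper's: project by $\leffib$ to see that $h$ is a finite Blaschke product with $h^{-1}(1)=\{z_0,\ldots,z_k\}$, form $\xi$ from the defining equation $fg=\prod_j(h-j)$, and study the unimodular quotient $F=f/\xi=\xi/g$. The first half is essentially correct (one small point: your count $|h^{-1}(1)|=d=k+1$ presupposes that \emph{every} boundary preimage of $1$ is a branch jump; this is true because $h|\bdy\dd$ is an orientation-preserving covering and the two branches of $\lagstd$ near the self-intersection point project to the two opposite arcs of the circle at $\zc=1$, so $\ell$ cannot be continuous through such a point --- but it deserves a sentence).

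The genuine gap is exactly where you flag it, and it is not a technicality: the step ``$|f|$ vanishes at rate $|z-z_i|^{1/2}$, so $v$ extends continuously with $|v(z_i)|=1$'' is asserted, not proved, and nothing you have established rules out the alternative. What you actually know is that $v$ is holomorphic and nonvanishing on the open disc with $|v|=1$ on $\bdy\dd\setminus\{z_0,\ldots,z_k\}$, and that is \emph{not} enough: the function $v(z)=e^{(1+z)/(1-z)}$ is holomorphic and zero-free on $\dd$, has $|v|\equiv 1$ on $\bdy\dd\setminus\{1\}$, yet satisfies $|v|>1$ throughout the interior. So the maximum modulus principle applied to $v$ and $1/v$ cannot be invoked until you control $v$ near the punctures from the interior, and the na\"ive rate comparison does not do this: a priori the zeros of $f$ and $g$ at $z_i$ could be distributed as $|z-z_i|^{\alpha}$ and $|z-z_i|^{1-\alpha}$ with $\alpha\neq 1/2$ (consistent with $|fg|=|\xi|^2$ and with $|f|=|g|$ on the boundary arcs). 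The paper closes this by viewing $F$ as a holomorphic map of the punctured disc with boundary on the Lagrangian circle $S^1\subset\cc$ and invoking the removable singularities theorem for finite-energy discs with Lagrangian boundary, which extends $F$ to a genuine holomorphic map $\dd\to\dd$; being zero-free, this Blaschke product is a unimodular constant. Equivalently one can use the asymptotic analysis on the strip-like ends (built into the moduli-space setup) to get the boundedness of $F$ at each $z_i$ and then remove the singularities after Schwarz reflection. Either way, some input beyond elementary complex analysis of the boundary values is needed, and your write-up stops short of supplying it.
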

\begin{proof}
  Let $h=\pi\circ u$.
  Then $h:\dd\to\dd$ is holomorphic and hence must be a Blaschke product.
  In particular, when restricted to $\bdy\dd$, $h$ induces a map $S^1\to S^1$ that is strictly increasing, and hence the winding number is equal to the number of points in the preimage of any point.
  Branch jumps occur precisely at the points $z$ on the boundary where $h(z)=1$, which proves
  \begin{displaymath}
    h^{-1}(1)=\sett{z_0,\ldots,z_k}.
  \end{displaymath}

  Now suppose $u=(f,g,h)$.
  It remains to prove that
  \begin{displaymath}
    f=e^{i\theta}\xi,\quad g=e^{-i\theta}\xi
  \end{displaymath}
  for some $\theta$.
  Note that $\xi$ is holomorphic and non-zero on the interior of $\dd$ and continuous up to the boundary.
  Let $F=f/\xi$, so $F$ is a holomorphic function on the interior of $\dd$ and extends continuously to $\dd\setminus\sett{z_0,\ldots,z_k}$.
  The defining equation of $\mnfld$ implies
  \begin{displaymath}
    F=\frac{f}{\xi}=\frac{\xi}{g}.
  \end{displaymath}
  The second equality implies that $F$ has no zeroes on the interior of $\dd$.
  Also, by the Lagrangian boundary conditions, $|f|=|g|$ on the boundary of $\dd$, and hence $|F|=1$ on the boundary.
  $F$ can be viewed as a holomorphic map $\dd\setminus\sett{z_0,\ldots,z_{k}}\to\cc$ with boundary in the Lagrangian $S^1\to\cc$.
  By the removable singularities theorem, $F$ must extend to smoothly to a map $\dd\to\dd$.
  Coupled with the fact that $F$ has no zeroes on the interior, this implies that $F$ is constant, and hence $F=e^{i\theta}$ for some $\theta$.
  Thus
  \begin{displaymath}
    u=(f,g,h)=(e^{i\theta}\xi,e^{-i\theta}\xi,h).
  \end{displaymath}
\end{proof}

More generally, we have
\begin{proposition}
  \label{prop:4}
  Let $\disc{u}$ be a holomorphic disc with boundary on $\lag{N,r}\subset\mnfld$ with $r\in\sett{1,\ldots,N}$.
  Assume that all marked points are branch points.
  Write
  \begin{displaymath}
    \dmp{\disc{u}}=\dmp{\disc{u}}^+=\brindices{+}{\disc{u}}=(z_0,\ldots,z_k),\quad \brjumps{+}{\disc{u}}(i)=(\p,\q)\ \forall i.
  \end{displaymath}
  Then $u=(f,g,rh)$, where $h$ is a Blaschke product such that
  \begin{displaymath}
    h^{-1}(1)=\sett{z_0,\ldots,z_k}
  \end{displaymath}
  and
  \begin{eqnarray*}
    f&=&e^{i\theta}\xi_1\xi_2\cdots\xi_{r-1}\sqrt{h-r}\sqrt{2h-r}\cdots\sqrt{rh-r}\sqrt{rh-(r+1)}\cdots\sqrt{rh-N},\\
    g&=&e^{-i\theta}\eta_1\eta_2\cdots\eta_{r-1}\sqrt{h-r}\sqrt{2h-r}\cdots\sqrt{rh-r}\sqrt{rh-(r+1)}\cdots\sqrt{rh-N}.\\
  \end{eqnarray*}
  Here $\xi_j$ and $\eta_j$ are Blaschke products that satisfy
  \begin{displaymath}
    \xi_j\eta_j=\frac{rh-j}{jh-r}.
  \end{displaymath}
  We define the square root function by taking a branch cut along the positive real axis and setting $\sqrt{-1}=i$.
\end{proposition}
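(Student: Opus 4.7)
The proof extends Lemma \ref{lemma:13} to arbitrary $r\in\sett{1,\ldots,N}$; the new complication is that for $r>1$, $f$ and $g$ can have interior zeros, so a single square-root factor no longer suffices. My plan is to extract a universal square-root factor $\Phi$ depending only on $h=(\pi\circ u)/r$, and then factor what remains into Blaschke products. As in Lemma \ref{lemma:13}, $h$ will be a finite Blaschke product with $h^{-1}(1)=\sett{z_0,\ldots,z_k}$; I would then set
\begin{displaymath}
\Phi^2 := \prod_{j=1}^{r}(jh-r)\cdot\prod_{j=r+1}^{N}(rh-j).
\end{displaymath}
On $\bar\dd$ only the factor $rh-r$ can vanish (for $j<r$, $|jh|\le j<r$; for $j>r$, $|rh|\le r<j$), so $\Phi^2$ is nonvanishing on the interior of $\dd$ and admits a single-valued holomorphic square root $\Phi$ there. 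Using the identity $|rh-j|=|jh-r|$ on $\sett{|h|=1}$, a direct computation gives $|fg|=|\Phi|^2$ on $\bdy\dd$, which together with the Lagrangian condition $|f|=|g|$ yields $|f|=|g|=|\Phi|$ on $\bdy\dd\setminus\sett{z_0,\ldots,z_k}$.

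Next I put $F=f/\Phi$ and $G=g/\Phi$; these are holomorphic and single-valued on the interior of $\dd$ with unit modulus on $\bdy\dd\setminus\sett{z_0,\ldots,z_k}$. The key step is to show they extend across each $z_i$ as bounded (hence finite Blaschke) holomorphic functions. I would work with the single-valued square $F^2=f^2/\Phi^2$ to sidestep branching issues: near $z_i$ both $|f|^2$ and $|\Phi|^2$ are of order $|z-z_i|$ on the boundary, and a maximum modulus argument applied to the auxiliary holomorphic functions $f^2/(z-z_i)$ and $\Phi^2/(z-z_i)$ bounds $F^2$ in the interior near $z_i$. The standard boundary removable singularity principle then identifies $F^2$ with a finite Blaschke product, and $F$ is recovered as its single-valued branch compatible with the interior holomorphic definition (similarly $G$). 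A direct computation gives
\begin{displaymath}
FG=\frac{fg}{\Phi^2}=\prod_{j=1}^{r-1}\frac{rh-j}{jh-r}=:\prod_{j=1}^{r-1}B_j,
\end{displaymath}
where $B_j=M_j\circ h$ for the Möbius involution $M_j(w)=(rw-j)/(jw-r)$; since $j<r$, $M_j$ preserves $\bar\dd$ (zero $j/r$ inside, pole $r/j$ outside the closed disc), so each $B_j$ is itself a Blaschke product with zero set $h^{-1}(j/r)$, and the sets $h^{-1}(j/r)$ are disjoint across distinct $j$.

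Finally, since $FG=\prod_jB_j$ has zeros partitioned into the disjoint blocks $h^{-1}(j/r)$, the zeros of $F$ (resp.\ $G$) split naturally by block. I would define $\xi_j$ (resp.\ $\eta_j$) to be the Blaschke product with zeros equal to those of $F$ (resp.\ $G$) lying in $h^{-1}(j/r)$; then $\xi_j\eta_j$ and $B_j$ are Blaschke products with the same zero set, hence equal up to a unimodular constant. Choosing the phases of the $\xi_j,\eta_j$ so that $\xi_j\eta_j=B_j$ holds on the nose, and absorbing the remaining global unit ambiguity in $F=e^{i\theta}\prod\xi_j$ and $G=e^{-i\theta}\prod\eta_j$ (the two phases are forced to be reciprocal by $FG=\prod B_j$) into the single phase $e^{i\theta}$, gives the stated expressions $f=F\Phi$ and $g=G\Phi$.

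The hard part will be the boundary regularity at the branch-jump points: the disc $u$ genuinely exhibits $\sqrt{w-z_i}$ behavior at each $z_i$ (cf.\ $f\sim\sqrt{z-1}$ in Lemma \ref{lemma:13}), so direct elliptic smoothness is unavailable. The passage to the single-valued $F^2$ is what allows one to reduce the question to the standard Blaschke/removable singularity toolkit, but one must still verify boundedness of $F^2$ at each $z_i$ by a careful max-modulus argument on the auxiliary functions $f^2/(z-z_i)$ and $\Phi^2/(z-z_i)$.
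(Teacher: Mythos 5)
Your proof is correct and follows essentially the same route as the paper: there, one divides $u$ by the reference solution $u_0=(f_0,g_0,rh)$ with $f_0=\xi_1\cdots\xi_{r-1}\cdot\Phi$ and $g_0=\Phi$ (your $\Phi$ being exactly the common square-root factor), observes the quotient has unit modulus on the boundary, and concludes that its reciprocal is a Blaschke product dividing $\xi_1\cdots\xi_{r-1}$ --- which is the same computation as your symmetric factorization $F=f/\Phi$, $G=g/\Phi$ with $FG=\prod_j B_j$ and the redistribution of zeros among the blocks $h^{-1}(j/r)$. Your extra care at the branch-jump points (passing to the single-valued $F^2$ and a Phragm\'en--Lindel\"of-type bound) supplies more detail than the paper, which simply invokes the boundary removable-singularity theorem as in the proof of Lemma \ref{lemma:13}.
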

\begin{proof}
  The proof is very similar to the proof of the previous lemma.
  First, the above $u$ is a holomorphic disc on $\lagstd$ with the stated properties.
  %To see that $u$ has the correct boundary conditions note that
  %\begin{displaymath}
  %  \xi_j\eta_j=\frac{rh-j}{jh-r}=\frac{h-\frac{j}{r}}{\frac{j}{r}h-1}
  %\end{displaymath}
  %is a map $\dd\to\dd$ (because it is the composition of $h$ with an automorphism of $\dd$).
  Second, suppose $u=(f,g,rh)$ is any disc satisfying the hypotheses of the proposition.
  Let $u_0=(f_0,g_0,rh)$ with
  \begin{eqnarray*}
    f_0&=&\xi_1\cdots\xi_{r-1}\sqrt{h-r}\sqrt{2h-r}\cdots\sqrt{rh-r}\sqrt{rh-(r+1)}\cdots\sqrt{rh-N},\\
    g_0&=&\sqrt{h-r}\sqrt{2h-r}\cdots\sqrt{rh-r}\sqrt{rh-(r+1)}\cdots\sqrt{rh-N},\\
    \xi_j&=&\frac{rh-j}{jh-r}.
  \end{eqnarray*}
  Write $f=Ff_0$, $g=\frac{1}{F}g_0$.
  Then it is easy to see that $|F|=1$ on the boundary of $\dd$, and $1/F$ is a Blaschke product dividing $\xi_1\cdots \xi_{r-1}$.
  The proposition follows.
\end{proof}

As a sanity check, let us calculate the dimension of the space of marked discs using the previous proposition.
$h$ is a Blaschke product of order $k+1$, hence can be written uniquely in the form
\begin{displaymath}
  h=\lambda\prod_{j=0}^k\frac{z-\alpha_j}{\bar\alpha_j z-1}
\end{displaymath}
with $\lambda\in S^1$ and $|\alpha_j|<1$.
Note that $\lambda$ is determined by the condition that $h(z_0)=1$.
Thus the choice of $h$ contributes $2(k+1)=2k+2$ to the dimension.
Next, choosing different $\xi_j$ and $\eta_j$ such that $\xi_j\eta_j=(rh-j)/(jh-r)$ just changes the component of the moduli space.
The only remaining choice is changing $e^{i\theta}$.
Thus the dimension is
\begin{displaymath}
  2k+3.
\end{displaymath}
By Proposition \ref{prop:17}, we also get that the dimension is
\begin{displaymath}
  -(k+1)(-1)+2+k=2k+3.
\end{displaymath}

The next corollary follows from the removable singularities theorem.
\begin{corollary}
  \label{cor:2}
  Any marked disc is obtained by taking a disc as in Proposition \ref{prop:4} and then adding some non-branch marked points to it.
\end{corollary}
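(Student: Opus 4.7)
The plan is to reduce to Proposition~\ref{prop:4} by forgetting all marked points that are not branch points. Let $\disc{u} = (u, \dmp{\disc{u}}, \ell)$ be an arbitrary holomorphic marked disc with boundary on $\lagstd$, and partition its marked points as $\dmp{\disc{u}} = B \cup N$, where $B$ is the set of branch points and $N$ the set of non-branch marked points. The goal is to produce a holomorphic marked disc $\disc{u}_B$ of the type covered by Proposition~\ref{prop:4} such that $\disc{u}$ is obtained from $\disc{u}_B$ by reinserting the points of $N$.

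First, define $\disc{u}_B = (u, B, \ell_B)$, where $\ell_B \colon \bdy\dd \setminus B \to S^2$ is the unique continuous extension of $\ell|_{\bdy\dd \setminus \dmp{\disc{u}}}$ across each $z \in N$. Such an extension exists and is unique because, at a non-branch marked point, the two one-sided limits of $\ell$ coincide by Definition~\ref{dfn:9}; the identity $\imm \circ \ell_B = u|_{\bdy\dd \setminus B}$ then follows from the continuity of $u$. Hence $\disc{u}_B$ is a legitimate marked disc whose underlying map is unchanged (and therefore still holomorphic) and whose every marked point is by construction a branch point. The maximum-principle argument at the start of Section~\ref{sec:class-holom-curv} applied to $\pi \circ u$ forces each such branch point to be outgoing of type $(\p,\q)$. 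Thus $\disc{u}_B$ satisfies the hypotheses of Proposition~\ref{prop:4}, which yields the claimed Blaschke-product description of $u$.

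Finally, $\disc{u}$ is recovered from $\disc{u}_B$ by reinserting the points of $N$ into the marked point list and restricting $\ell_B$ back to $\bdy\dd \setminus \dmp{\disc{u}}$; this restriction coincides with $\ell$ by the uniqueness of the continuous extension above. The role of the removable singularities theorem, alluded to in the statement, is implicit in the preceding step: once we forget $N$, we wish to regard $u$ as a holomorphic disc whose only boundary punctures are the branch points in $B$, and removable singularities guarantees that the non-branch marked points may be treated as ordinary smooth boundary points rather than as analytic singularities. I do not anticipate any substantive obstacle—the corollary is essentially a bookkeeping statement that expands Proposition~\ref{prop:4} to accommodate the presence of additional cosmetic (non-branch) marked points.
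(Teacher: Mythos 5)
Your argument is correct and is essentially the paper's own (one-line) justification, expanded: the paper simply asserts that the corollary follows from the removable singularities theorem, and you have correctly identified that forgetting the non-branch marked points, extending $\ell$ continuously across them, and invoking removable singularities to treat those punctures as ordinary boundary points reduces the statement to Proposition \ref{prop:4}. One cosmetic caveat: the maximum principle does not force a branch point to be \emph{outgoing} (that is a labeling choice); it forces the jump type to be $(\p,\q)$ if outgoing and $(\q,\p)$ if incoming, which is consistent with the all-outgoing normalization in Proposition \ref{prop:4} after relabeling.
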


\subsection{Classifying strips}
\label{sec:classifying-strips}

Let $J_t=\acs$ for all $t$ and $\J=\depj{t}{t}$.
Then, using Remark \ref{rmk:5}, we can think of a holomorphic strip as a holomorphic disc.
Thus the results of the previous section can be used to classify all holomorphic strips.
For example, we have
\begin{proposition}
  \label{prop:5}
  Let $\strip{u}=(u,\dmp{0,\strip{u}},\dmp{1,\strip{u}},\ell)$ with $\dmp{0,\strip{u}}=\dmp{1,\strip{u}}=\emptyset$ be a holomorphic strip with boundary on $\lagstd$.
  Then $\strip{u}$ has $1$ or $2$ branch jumps; if $-\infty$ is a branch jump it is of type $(\q,\p)$ and if $+\infty$ is a branch jump it is of type $(\p,\q)$.
  Moreover, there exists a marked disc $\disc{v}$ with $\dmp{\disc{v}}=\sett{z_0,z_1}$ and a biholomorphism $\phi:\str\to\dd\setminus\sett{z_0,z_1}$ such that $u=v\circ\phi$.
  The converse is also true in the sense that any such $v\circ\phi$ gives rise to a holomorphic strip.
  (However, there may be many ways to write any given $u$ in such a way.)
\end{proposition}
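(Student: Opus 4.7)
The plan is to reduce everything to the classification of discs in Proposition~\ref{prop:4} via the correspondence described in Remark~\ref{rmk:5}. Because the almost complex structure is the $t$-independent family $J_t \equiv \acs$, any $\J$-holomorphic strip is actually $\acs$-holomorphic in the ordinary sense on $\str$. Fix a biholomorphism $\phi : \dd \setminus \sett{z_0, z_1} \to \str$ with $\phi(z_0) = -\infty$ and $\phi(z_1) = +\infty$, and set $v := u \circ \phi$, which is $\acs$-holomorphic on $\dd \setminus \sett{z_0, z_1}$. The asymptotic condition $\lim_{s \to \pm\infty} u(s, \cdot) = \mathrm{const}$ uniformly in $t$ from Definition~\ref{dfn:5} translates via $\phi$ into continuous extendibility of $v$ at $z_0$ and $z_1$, and Riemann's removable singularities theorem then yields an $\acs$-holomorphic extension $v : \dd \to \mnfld$. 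Setting $\ell_v := \ell \circ \phi$ on $\bdy\dd \setminus \sett{z_0, z_1}$ and taking $\dmp{\disc{v}} = \sett{z_0, z_1}$ assembles a marked disc $\disc{v}$ with $u = v \circ \phi$. The converse direction, that any such $v \circ \phi$ is a holomorphic strip with the required boundary conditions and asymptotics, is immediate from this construction.

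Next I bound the number of branch jumps. Since $\dmp{0, \strip{u}} = \dmp{1, \strip{u}} = \emptyset$, the only candidate branch points of $\strip{u}$ are $s = \pm\infty$, giving at most two. If there were zero branch jumps, then $\ell$ would extend continuously across $\pm\infty$, and the associated marked disc $\disc{v}$ would have no branch points on $\bdy\dd$. Then $v^*\oneform = (\imm \circ \ell_v)^*\oneform = d(\ell_v^*\exactprim)$ on $\bdy\dd$, so by Stokes $\int_\dd v^*\sympl = \int_{\bdy\dd} v^*\oneform = 0$. Since $\sympl$ tames $\acs$, this forces $v$ constant and hence $u$ constant, contradicting the non-constancy requirement when $k_0 = k_1 = 0$ in Definition~\ref{dfn:39}.

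Finally, I identify the branch jump types. Let $h = \leffib \circ v : \dd \to \cc$. Then $h$ is holomorphic with $h(\bdy\dd) \subset \sett{|z| = r}$, so by the maximum principle $h(\dd) \subset \sett{|z| \le r}$; in particular $h|_{\bdy\dd}$ winds positively around $0$. Branch points on $\bdy\dd$ correspond precisely to the preimages of $z = r$ at which $\ell_v$ switches branches. As $z$ traverses $\bdy\dd$ counterclockwise, $h(z)$ crosses $z = r$ counterclockwise, that is, from the lower half-plane side to the upper half-plane side. By the description at the end of Section~\ref{sec:lefsch-fibr-persp}, the $\p$-branch of $\lagstd$ projects near $r$ to the lower half-plane and the $\q$-branch to the upper half-plane. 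Hence at every branch point on $\bdy\dd$ the counterclockwise transition reads $\p \to \q$. By Definition~\ref{dfn:2} this is type $(\p,\q)$ at an outgoing marked point and type $(\q,\p)$ at an incoming one, so translating back via the identifications $-\infty \leftrightarrow \text{incoming}$ and $+\infty \leftrightarrow \text{outgoing}$ gives the stated types at $\pm\infty$.

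The main obstacle is the orientation bookkeeping in the last step: one must verify that the counterclockwise convention on $\bdy\dd$, the positive winding of $h$ supplied by the maximum principle, and the incoming/outgoing labels at $\pm\infty$ fit together consistently with the labeling convention of Definition~\ref{dfn:2}. The removable singularities extension and the exactness argument ruling out zero branch jumps are otherwise routine.
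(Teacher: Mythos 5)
Your proposal is correct and follows exactly the route the paper intends: the paper gives no separate proof of this proposition, simply asserting that via Remark \ref{rmk:5} a strip becomes a twice-punctured disc, so that the removable-singularities extension, the exactness argument ruling out branch-jump-free non-constant curves, and the maximum-principle/Lefschetz-fibration identification of the jump types from the start of Section \ref{sec:class-holom-curv} apply verbatim. Your orientation bookkeeping in the last step (counterclockwise boundary traversal, lower-half-plane $\p$-branch to upper-half-plane $\q$-branch, incoming at $-\infty$ and outgoing at $+\infty$) matches the paper's conventions, and the reversed direction of your $\phi$ relative to the statement is purely notational.
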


\subsection{Regularity}
We prove that the discs in Proposition \ref{prop:4} are regular.
Let $\disc{u}$ be a disc from the proposition, with $k+1$ marked boundary points each of which is a branch jump of type $(\p,\q)$ (thinking of the marked points as outgoing).
Let $\metadmp,\metabrjumps$ be the marked point data for $\disc{u}$, so $\disc{u}\in\spacemd(\metadmp,\metabrjumps)$.
To show that $\disc{u}$ is regular we need to show that the linearization of
\begin{displaymath}
  \dbar_{k+1,\metabrjumps,J}:\spacemd_{k+1}(\metabrjumps)\to\bbundle_{k+1}(\metabrjumps)
\end{displaymath}
at $(\disc{u},\metadmp)$ is surjective.
The linearization is the map
\begin{displaymath}
  D_{\disc{u},\metadmp}\dbar_{k+1,\metabrjumps,J}:\wkp{\disc{u}}\oplus\tldisc{\disc{u}}\oplus T_{\metadmp}\conf(k+1)\to \lp(\Lambda^{0,1}_{\dsurf{\metadmp}}\otimes\disc{u}^*T\mnfld).
\end{displaymath}
Clearly it suffices to show that the linearization of
\begin{displaymath}
  \dbar_{\metadmp,\metabrjumps,J}:\spacemd(\metadmp,\metabrjumps)\to\bbundle(\metadmp,\metabrjumps)
\end{displaymath}
at $\disc{u}$ is surjective, i.e.\ that we can get surjectivity without varying the marked points.
The linearization of this operator is the map
\begin{displaymath}
  D_{\disc{u}}\dbar_{\metadmp,\metabrjumps,J}:\wkp{\disc{u}}\oplus\tldisc{\disc{u}}\to\lp(\Lambda^{0,1}_{\dsurf{\metadmp}}\otimes\disc{u}^*T\mnfld).
\end{displaymath}
All marked points are branch jumps so $\tldisc{\disc{u}}=\sett{0}$.
Since $(\mnfld,\sympl,\acs)$ is K\"ahler, the linearization is the Dolbeault operator.
Thus it suffices to show that the Dolbeault operator
\begin{displaymath}
  \dbar:\wkp{\disc{u}}\to\lp(\Lambda^{0,1}_{\dsurf{\metadmp}}\otimes\disc{u}^*T\mnfld)
\end{displaymath}
is surjective.

We can write $u=(e^{i\theta}\xi\sqrt{rh-r}F,e^{-i\theta}\eta\sqrt{rh-r}F)$ where $F$ is a non-vanishing holomorphic function on the disc, $\xi=\xi_1\cdots\xi_{r-1}$, $\eta=\eta_1\cdots\eta_{r-1}$, and $h$ is a Blaschke product with $h^{-1}(1)=\sett{z_0,\ldots,z_k}$.
Thinking of $u$ as a family of discs, differentiating with respect to $\theta$ gives
\begin{displaymath}
  \pd{u}{\theta}=(ie^{i\theta}\xi\sqrt{rh-r}F,-ie^{-i\theta}\eta\sqrt{rh-r}F,0).
\end{displaymath}
This is a holomorphic section of $u^*T\mnfld$.
It follows that 
\begin{displaymath}
  S=(ie^{i\theta}\xi F,-ie^{-i\theta}\eta F,0)
\end{displaymath}
 is also a holomorphic section and hence defines a holomorphic line bundle
\begin{displaymath}
  \linebundle\subset u^*T\mnfld.
\end{displaymath}
Define Lagrangian boundary conditions $\lambda$ for this line bundle by
\begin{displaymath}
  \lambda(z)=\sqrt{rh-r}\rr\cdot S(z),\quad z\in\bdy\dsurf{\metadmp}.
\end{displaymath}
%Here we view $\lambda$ as a map $\bdy\dsurf{\metadmp}\to u^*T\lagstd$.
Since $\linebundle$ is holomorphic, we get a commutative diagram of operators
\begin{displaymath}
  \begin{array}{ccccc}
    \wkpcust(\linebundle,\lambda) & \to & \wkp{\disc{u}} & \to & X\\
    \downarrow &&\downarrow &&\downarrow\\
    \lp(\Lambda^{0,1}\otimes\linebundle) &\to & \lp(\Lambda^{0,1}\otimes\disc{u}^*T\mnfld) &\to &Y.
  \end{array}
\end{displaymath}
Here $X$ and $Y$ are quotients so that the rows are exact.
The first two vertical arrows are Dolbeault operators and the last vertical arrow is the operator induced by the second vertical arrow.
It is well-defined because $\linebundle$ is holomorphic.

Thinking of all the marked points as outgoing, the Maslov index of the first operator is $k+1$.
This is because as $z$ varies on an arc of $\bdy\dsurf{\metadmp}$ from the marked point $z_j$ to $z_{j+1}$, the phase of $\sqrt{rh-r}$ varies from $e^{i\pi/4}$ to $e^{3i\pi/4}$.
By an obvious modification of Proposition \ref{prop:17}\footnote{
Start with the formula $\ind=\mu(\disc{u})+2(1-\brindices{+}{\disc{u}})+k+1$.
Subtract $k+1$ because the marked points are not varying, and change the $2$ to a $1$ because $\linebundle$ has rank 1.
}, the index of the first operator is $(k+1)+1(1-(k+1))=1$.
By automatic regularity in dimension one it follows that the first operator is surjective.
The middle operator has index $k+2$, and hence the last operator has index $k+1$.
Since it is one dimensional, it is again surjective.
The snake lemma then implies that the middle operator is surjective as well.

\begin{proposition}
  \label{prop:1}
  All $\acs$-holomorphic discs with boundary on $\lagstd$ are regular.
\end{proposition}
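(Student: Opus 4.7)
The plan is to reduce the general case to the one handled by the snake-lemma argument immediately preceding this proposition. By Corollary \ref{cor:2}, every $\acs$-holomorphic disc $\disc{u}$ with boundary on $\lagstd$ is obtained from a disc $\disc{u}_0$ of the type treated in Proposition \ref{prop:4} (all boundary marked points being branch jumps of type $(\p,\q)$) by appending finitely many non-branch marked points $z_{j_1},\ldots,z_{j_m}$. In particular the underlying holomorphic map $u$ of $\disc{u}$ coincides with that of $\disc{u}_0$, so the analysis at $\disc{u}_0$---whose Dolbeault operator has already been shown to be surjective---serves as the model.

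First I would observe that to verify regularity at $\disc{u}$ it suffices, in view of Proposition \ref{prop:17}, to show that the restricted linearization
\[
D_{\disc{u}}\dbar_{\metadmp,\metabrjumps,\acs}:\wkp{\disc{u}}\oplus\tldisc{\disc{u}}\to\lp(\Lambda^{0,1}_{\dsurf{\metadmp}}\otimes\disc{u}^*T\mnfld)
\]
is surjective, since enlarging the domain by the $T_{\metadmp}\conf(k)$ summand can only help. Next I would introduce the natural map
\[
\Phi:\wkp{\disc{u}}\oplus\tldisc{\disc{u}}\to\wkp{\disc{u}_0},\quad(\xi,V)\mapsto\xi+\tilde V,
\]
where $\tilde V$ is the cutoff extension of $V$ described before Definition \ref{dfn:32}. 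The $\lp$ codomains for $\disc{u}$ and $\disc{u}_0$ agree away from the $m$ additional punctures and are thus canonically identified. A direct check then shows that $\Phi$ intertwines the two Dolbeault operators, so surjectivity at $\disc{u}_0$ transfers to surjectivity of the restricted linearization at $\disc{u}$.

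The heart of the argument is the verification that $\Phi$ is a bounded linear isomorphism. This is a standard (but not entirely automatic) statement about weighted Sobolev spaces on strip-like ends: any $\eta\in\wkp{\disc{u}_0}$ extends smoothly through each $z_{j_i}$ and hence has a well-defined asymptotic value $V_{j_i}\in\immpf T_{\ell(z_{j_i})}S^2$ when read off along the strip-like end at $z_{j_i}$; subtracting a cutoff extension of $V=(V_{j_1},\ldots,V_{j_m})$ from $\eta$ produces a section decaying exponentially at rate $\delta$ on each end, which lies in $\wkp{\disc{u}}$. The only technical point is to choose $\delta>0$ smaller than the spectral gap of the asymptotic operator at each non-branch puncture so that both $\Phi$ and its inverse are bounded; this is the main, though standard, obstacle. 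Once it is in place, the proposition follows immediately from the surjectivity already established for $\disc{u}_0$.
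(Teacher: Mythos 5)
Your proposal is correct and follows essentially the same route as the paper: reduce to the all-branch-jump case settled by the preceding snake-lemma computation (via Corollary \ref{cor:2}), then observe that appending non-branch marked points does not affect regularity. The paper asserts this last step in a single sentence, while you spell out the standard weighted-Sobolev identification (matching the exponential-decay space plus asymptotic-value summand at each new puncture with the unpunctured space) that justifies it; the substance is the same.
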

\begin{proof}
  We proved the case when all marked points are branch jumps above.
  Any disc can be obtained from a disc with all branch jumps by adding some non-branch marked points.
  Adding non-branch marked points does not affect regularity.
  Thus all discs are regular.
\end{proof}

For strips the same result holds because we are considering $\J=\depj{t}{t}$ with $J_t=\acs$ for all $t$.
\begin{corollary}
  \label{cor:1}
  All $\J$-holomorphic strips are regular.
\end{corollary}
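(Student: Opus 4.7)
The plan is to reduce Corollary \ref{cor:1} to Proposition \ref{prop:1} by exploiting the $t$-independence of $\J$. Since $J_t = \acs$ for all $t$, a $\J$-holomorphic strip is just an $\acs$-holomorphic map $u : (\str,\bdy\str) \to (\mnfld,\lagstd)$. By Remark \ref{rmk:5} (and Proposition \ref{prop:5} in the no-extra-marked-points case, extended to general $k_0, k_1$ by adding non-branch marked points as in Corollary \ref{cor:2}), any such strip $\strip{u}$ arises from a marked disc $\disc{v}$ via a biholomorphism $\phi : \str \to \dd \setminus \{z_0,z_1\}$, with the $-\infty$ end of $\str$ sent to $z_0$ and the $+\infty$ end sent to $z_1$.

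Next, I would check that $\phi$ identifies the relevant analytic data on the two sides. The exponential-weight condition $\int e^{\delta|s|}|\xi|^p\,ds\,dt < \infty$ near $\pm\infty$ in Definition \ref{dfn:37} pulls back, after composing with a biholomorphism of a half-strip onto a punctured neighborhood in $\dd$, to the corresponding weight condition near $z_0, z_1$ in Definition \ref{dfn:18}. So $\phi^*$ gives a bounded isomorphism $\wkp{\disc{v}} \cong \wkp{\strip{u}}$. The summands $W^{\pm\infty}$ of $\tldisc{\strip{u}}$ from Definition \ref{dfn:36} correspond, under the same rule, to the summands $T_{\ell(z_i)}S^2$ in $\tldisc{\disc{v}}$ from Definition \ref{dfn:34} (or are both zero when $\pm\infty$ is a branch jump). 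Finally, a deformation of the strip marked points together with the $\rr$-translation on $\str$ corresponds, via $\phi$, to a deformation of $\metadmp \in \conf(k_0 + k_1 + 2)$ on the disc side modulo the action of $\Aut(\dd,\bdy\dd)$ fixing the two distinguished points.

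Because $(\mnfld,\acs,\sympl)$ is K\"ahler, the linearization of $\dbar$ is the Dolbeault operator on $u^* T\mnfld$, and this operator is manifestly natural under biholomorphic reparametrization of the domain. Consequently the linearization $D_{\strip{u}}\dbar_{k_0,k_1,\metabrjumps,\J}$ is conjugate (via $\phi^*$ on the domain and the induced map on the target $(0,1)$-forms) to $D_{\disc{v}}\dbar_{k+2,\metabrjumps,\acs}$ after restricting to the subspace where the images of $\pm\infty$ are not varied. Surjectivity of the latter has been established in Proposition \ref{prop:1} (and the proof there does not use the freedom to vary all marked point positions—it works for the operator on $\wkp{\disc{v}} \oplus \tldisc{\disc{v}}$ alone), so surjectivity of the strip linearization follows.

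The main technical point that needs care is the bookkeeping of weighted Sobolev spaces and tangent spaces to configuration spaces under $\phi$—in particular, verifying that the exponential-weight definitions at $\pm\infty$ on the strip and at the punctures on the disc really do correspond, and that passing between the $\rr$-translation on $\str$ and the disc automorphisms fixing $\{z_0, z_1\}$ does not change what is being proved. None of this affects surjectivity, however: the argument is essentially that the Dolbeault operator is a biholomorphic invariant, so regularity transports from Proposition \ref{prop:1} to the strip setting without any new analytic input.
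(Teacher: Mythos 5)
Your proposal is correct and follows the same route as the paper, which simply observes that since $J_t=\acs$ for all $t$, a $\J$-holomorphic strip is an $\acs$-holomorphic disc with two boundary punctures, so regularity transfers from Proposition \ref{prop:1}. You have merely filled in the bookkeeping (weighted Sobolev spaces, naturality of the Dolbeault operator under the biholomorphism) that the paper leaves implicit.
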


\section{Floer cohomology of $\lagstd$}
\label{sec:floer-cohom-lagstd}

In this section we calculate the self-Floer cohomology of the immersed Lagrangian spheres $\lagstd$ with $\zz_2$-coefficients.
We use a version of Floer cohomology that combines Morse theory and holomorphic curves.
A precise definition is given in Section \ref{sec:immers-self-lagr}.
The calculation is given in Section \ref{sec:calc-floer-cohom}.

Before getting to the details, we explain the idea behind the version of Floer cohomology that we use.
As auxiliary data we need a Morse function and time-dependent almost complex structure.
The Floer cochain complex is defined to be the Morse complex, plus two extra generators for each self-intersection point of the Lagrangian.
The Floer differential is defined by counting pearly trajectories, which are strings of holomorphic strips and Morse flow lines.
Actually because the Lagrangians are exact (and hence there are no holomorphic strips without branch jumps), there are not many types of pearly trajectories that need to be counted.
See Figure \ref{fig:trajectories} in Section \ref{sec:introduction} for the types of trajectories.

In \cite{alston-fciegl} it is proved that the Floer cohomology defined by counting pearly trajectories is well-defined and does not depend on the choice of auxiliary data.
More precisely, the following theorem is proven:
\begin{theorem}
  \label{thm:5}
  Let $\iota:L\to \bar L$ be a compact immersed exact graded Lagrangian submanifold of the exact graded symplectic manifold $(M,\omega,\lambda,\Omega)$.
  Let 
  \begin{displaymath}
    R=\set{(p,q)\in L\times L}{\iota(p)=\iota(q),\ p\neq q}.
  \end{displaymath}
  Assume $\dim L\geq 2$.
  Let $J$ be an almost complex structure on $M$ such that there exists a compact set $K\subset M$ with the property that any holomorphic disc with boundary on $\bar L$ is contained in $K$.
  Let $f:L\to\rr$ satisfy $df=\iota^*L$.
  Assume $L$ satisfies the following positivity condition: 
  \begin{displaymath}
    \text{If $(p,q)\in R$ and $f(p)-f(q)>0$ then $\ind(p,q)\geq \frac{n+3}{2}$.}
  \end{displaymath}
  Then the self-Floer cohomology of $L$ is well-defined and can be computed by counting pearly trajectories for a generic time dependent almost complex structure that agrees with $J$ outside some compact set.
\end{theorem}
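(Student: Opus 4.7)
The plan is to construct a cochain complex on $\cm(f;\zz_2) \oplus \zz_2 R$ (graded by Morse index on the first summand and by $\ind$ on $R$), define a differential $d$ by counting the rigid pearly trajectories of Figure \ref{fig:trajectories}, verify $d^2 = 0$ by analyzing the compactified one-dimensional moduli spaces, and prove invariance under changes of auxiliary data by a standard continuation argument. The positivity hypothesis is invoked at precisely one place: ruling out disc bubbling at branch points, which is the main obstacle throughout.

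\textbf{Transversality.} Using the framework of Section \ref{sec:analytic-setup}, the moduli spaces of holomorphic strips with prescribed branch asymptotics at $\pm\infty$ and no extra boundary marked points are zero sets of a Fredholm $\bar\partial$-section on $\spacemd(\metadmp,\metabrjumps)$, whose linearization has the index given by Proposition \ref{prop:16} (with the factor $2$ replaced by $n$ in arbitrary dimension). A Sard--Smale argument on the universal moduli space, varying $\J = \depj{t}{t}$ over a $C^\infty_\epsilon$ space of time-dependent perturbations of $J$ supported in a compact neighborhood of $K$, makes these zero sets smooth of the expected dimension for generic $\J$. Generic choice of a Morse--Smale pair $(f,g)$ on $L$ then makes the evaluation maps at the smooth (non-branch) asymptotic ends of strips transverse to stable and unstable manifolds of $f$. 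Consequently all pearly-trajectory moduli spaces of virtual dimension $0$ or $1$ are cut out transversely.

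\textbf{Compactness and disc bubbling.} Exactness provides a uniform energy bound and the containment hypothesis on holomorphic discs provides a uniform $C^0$ bound, so Gromov compactness applies. A sequence in a $1$-dimensional moduli space subsequentially converges to a broken configuration comprised of (i) Morse flow-line breaking, (ii) strip breaking, or (iii) a holomorphic disc bubble attached at a boundary point of one of the main strips; sphere bubbles and disc bubbles attached at smooth boundary points are excluded by exactness. Pairing up ends of type (i) and (ii) gives the standard terms of $d \circ d$, so $d^2 = 0$ mod $2$ reduces to excluding (iii). This is the main obstacle, and where positivity is essential: a non-constant disc bubble $\disc{v}$ with branch jump types $(p_i, q_i)$ satisfies
\[
0 < \int \disc{v}^*\omega = \sum_{i \in \brindices{+}{\disc{v}}} \bigl(f(p_i) - f(q_i)\bigr) - \sum_{i \in \brindices{-}{\disc{v}}} \bigl(f(p_i) - f(q_i)\bigr)
\]
by Stokes' theorem applied to $\iota^*\lambda = df$, so at least one branch jump on $\partial \disc{v}$ has the sign with $f(p) - f(q) > 0$ after accounting for orientation. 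The assumption $\ind(p,q) \geq (n+3)/2$ then, via the dimension-$n$ analogs of Propositions \ref{prop:13} and \ref{prop:17}, drives the Fredholm index of $\disc{v}$ sufficiently negative that the total virtual dimension of the broken configuration is strictly less than that of any codimension-$1$ stratum in the compactified moduli space. For generic $\J$ such bubbles do not appear, and (i) and (ii) pair up to give $d^2 = 0$.

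\textbf{Invariance.} Given two admissible tuples $(f_0, g_0, \J_0)$ and $(f_1, g_1, \J_1)$, one interpolates by a homotopy $(f_s, g_s, \J_s)$ preserving the positivity condition throughout (possible since the condition depends only on $f$ and the topological datum $\ind$). Counting parametrized pearly trajectories then defines a continuation chain map, and the same bubble-elimination argument shows it is well-defined. A standard two-parameter family argument upgrades this to a chain-homotopy equivalence, so $HF^*(L)$ is independent of choices and computable by pearly trajectory counts as claimed. The hardest step of the construction is the disc-bubble elimination of the previous paragraph, and the precise bound $(n+3)/2$ in the positivity hypothesis is calibrated exactly so that the resulting virtual dimension count is strict for every configuration of branch jumps.
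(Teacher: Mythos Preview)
The paper does not prove Theorem \ref{thm:5}; it cites \cite{alston-fciegl} for the full argument and gives only a brief sketch in the paragraphs surrounding the theorem statement. Your outline follows the same broad strategy---transversality via time-dependent $\J$, Gromov compactness, ruling out disc bubbles via the positivity hypothesis, and continuation maps for invariance---and this matches the paper's sketch.

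However, your bubble-elimination step misidentifies which component to analyze. You argue that the positivity assumption drives the Fredholm index of the disc bubble $\disc{v}$ negative, and conclude that for generic $\J$ such bubbles do not appear. But a disc bubble is $J_0$- or $J_1$-holomorphic (for a single fixed $t\in\{0,1\}$), and varying the time-dependent family $\J$ does not perturb it; you have no transversality statement for such discs, so a negative virtual index says nothing about actual existence. Moreover, knowing that \emph{one} branch jump of $\disc{v}$ has index at least $(n+3)/2$ does not by itself force the total index of a multi-jump disc to be negative. The paper's sketch (and the full argument in \cite{alston-fciegl}) proceeds differently: when a disc bubbles off, the \emph{remaining strip} acquires an extra boundary marked point carrying the branch jump; this marked strip \emph{is} $\J$-holomorphic and hence regular for generic $\J$, which is precisely why the paper insists in Section \ref{sec:analytic-setup} on setting up moduli spaces of strips with arbitrary numbers of boundary marked points. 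The positivity hypothesis, fed into the index formula of Proposition \ref{prop:16}, makes the expected dimension of this marked strip negative, so the strip component itself cannot exist---ruling out the bubbled configuration without any control over the disc. Your transversality paragraph, which treats only strips ``with prescribed branch asymptotics at $\pm\infty$ and no extra boundary marked points,'' therefore omits exactly the moduli spaces that carry the argument.
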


See Section \ref{sec:immers-self-lagr} for the precise definition of the Floer cohomology.
Moreover, in \cite{alston-fciegl}, it is proved that this version of Floer cohomology agrees with the more standard version (at least standard for embedded Lagrangians); namely, Floer cohomology defined by taking two copies of the Lagrangian, Hamiltonian perturbing one copy, and then counting holomorphic strips connecting intersection points.

We end our preparatory remarks by saying a few words about how transversality and bubbling are dealt with in \cite{alston-fciegl}.
The strips that appear in the pearly trajectories are holomorphic with respect to a time-dependent almost complex structure $\J=\depj{t}{t}$.
The time-dependence of $\J$ allows transversality to be achieved using classical methods (essentially Section 7 of \cite{MR1360618}).
A sequence of such strips can degenerate into broken strips plus disc bubbles.
Dealing with the disc bubbles is the main difficulty.
They attach to the strips via branch points (because of exactness), and they are $J_t$-holomorphic for $t=0$ or $1$.
However, the positivity assumption in Theorem \ref{thm:5} implies that the strip component must have negative index.
Hence generically it cannot exist and disc bubbling is ruled out. 
In the terminology of \cite{fooo} and \cite{MR2785840}, the positivity assumption should be thought of as implying that the Lagrangian is unobstructed.

\subsection{Immersed self-Lagrangian Floer cohomology}
\label{sec:immers-self-lagr}
Let $\morsefun:S^2\to \rr$ be a smooth function and $\metric$ a smooth metric on $S^2$ such that the pair $\morsefun,\metric$ is Morse-Smale.
Let
\begin{displaymath}
  \morsecpx{}=\bigoplus_k\morsecpx{k}
\end{displaymath}
denote the Morse complex of $\morsefun$ with $\zz_2$-coefficients.
The degree $k$ piece is generated by the critical points of $f$ with Morse index $k$ (the negative definite space of $D^2f$ has dimension $k$).

We define the Floer chain complex of the immersion $\imm:S^2\to\lagstd$ with $\zz_2$-coefficients to be the vector space
\begin{equation}
  \floercpx{}=\morsecpx{}\oplus \zz_2\bjstd.
\end{equation}
Here $\bjstd$ is as in Definition \ref{dfn:35}.
$\floercpx{}$ has canonical generators corresponding to the critical points of $\morsefun$ and the elements of $\bjstd$.
The generators corresponding to the elements of $\bjstd$ are graded by their indices, which by Lemma \ref{lemma:10} are
\begin{eqnarray*}
  \ind (\p,\q)&=&-1,\\
  \ind (\q,\p)&=&3.
\end{eqnarray*}
$\floercpx{}$ is the graded vector space
\begin{displaymath}
  \floercpx{}=\bigoplus_{k}\floercpx{k}.
\end{displaymath}

We turn to defining the differential.
We need some preliminary definitions first.

\begin{definition}
  \label{dfn:20}
  Given the Morse-Smale pair $\morsefun,\metric$ on $S^2$ as above, let $\flow{\morsefun}{s}$ denote the time $s$ negative gradient flow of $\morsefun$, so 
  \begin{displaymath}
   \pd{}{s} \flow{\morsefun}{s} +\nabla f \circ\flow{\morsefun}{s}=0.
  \end{displaymath}
  For $x$ a critical point of $f$, let
  \begin{displaymath}
    \unst(x)=\set{y\in S^2}{\lim_{s\to-\infty}\flow{\morsefun}{s}(y)=x}.
  \end{displaymath}
  denote the \textit{unstable manifold} of $x$, and
  \begin{displaymath}
    \st(x)=\set{y\in S^2}{\lim_{s\to\infty}\flow{\morsefun}{s}(y)=x}.
  \end{displaymath}
  denote the \textit{stable manifold}.
\end{definition}

We will generally denote generators of $\floercpx{}$ using bold letters, for example $\gen{x_\pm}$ or $\gen{x}$.
We also view generators as elements of $\crit(\morsefun)\amalg\bjstd$.

\begin{definition}
  \label{dfn:10}
  For $\gen{x_\pm}$ generators of $\floercpx{}$, we define the moduli space $\connorbitstd$  as follows:
  \begin{enumerate}
  \item If $\gen{x_\pm}\in\crit(\morsefun)$ then $\connorbit(\gen{x_-},\gen{x_+})$ is the set of (unparameterized) Morse trajectories,
    \begin{displaymath}
      \connorbit(\gen{x_-},\gen{x_+})=\left(\unst(\gen{x_-})\cap \st(\gen{x_+})\right)/\rr.
    \end{displaymath}
    (If $\gen{x_-}=\gen{x_+}$, we do not mod out by $\rr$.)
  \item If $\gen{x_-}\in \bjstd$ and $\gen{x_+}\in\crit(\morsefun)$ we let
    \begin{displaymath}
      \connorbitstd= \modms{0,0}\times_{\ev_{+\infty}}\st(\gen{x_+}),
    \end{displaymath}
    with $\brjumps{-\infty}{}=\gen{x_-}$, $\brjumps{+\infty}{}=\emptyset$.
  \item If $\gen{x_-}\in\crit(\morsefun)$ and $\gen{x_+}\in \bjstd$ we let
    \begin{displaymath}
      \connorbitstd= \unst(\gen{x_-})\times_{\ev_{-\infty}}\modms{0,0}
    \end{displaymath}
    with $\brjumps{-\infty}{}=\emptyset$, $\brjumps{+\infty}{}=\gen{x_+}$.
  \item If $\gen{x_\pm}\in \bjstd$, we let $\connorbitstd$ be the union of the following two sets:
    \begin{itemize}
    \item the set $\modms{0,0}$, with $\brjumps{\pm\infty}{}=\gen{x_\pm}$; and
    \item the set of all pairs $(\disc{u_1},\disc{u_2})$ with 
      \begin{displaymath}
        \disc{u_1}\in\modms{0,0}
      \end{displaymath}
      where $\brjumps{-\infty}{}=\gen{x_-}$ and $\brjumps{+\infty}{}=\emptyset$, and
      \begin{displaymath}
        \disc{u_2}\in\modms{0,0}
      \end{displaymath}
      where $\brjumps{-\infty}{}=\emptyset$ and $\brjumps{+\infty}{}=\gen{x_+}$, and such that
      \begin{displaymath}
        \flow{\morsefun}{s}(\ev_{+\infty}(\disc{u_1}))=ev_{-\infty}(\disc{u_2})
      \end{displaymath}
      for some $s>0$.
    \end{itemize}
  \end{enumerate}
\end{definition}

\begin{remark}
  \label{rmk:1}
  The positivity assumption in Theorem \ref{thm:5} (which the immersed spheres $\lagstd$ satisfy) rules out the existence of the latter types of trajectories in item 4.\ when $\deg(\gen{x_-})-\deg(\gen{x_+})=1$.
\end{remark}

\begin{lemma}(See \cite{alston-fciegl})
  \label{lemma:30}
  For generic $(\morsefun,\metric,\J=\depj{t}{t})$, $\connorbitstd$ is a smooth manifold of dimension $\deg(\gen{x_-})-\deg(\gen{x_+})-1$.
\end{lemma}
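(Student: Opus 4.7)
The plan is to treat the four cases in Definition \ref{dfn:10} separately, invoking the Fredholm theory from Section \ref{sec:moduli-spac-holom} and standard Morse-theoretic transversality, and then to verify in each case that the expected dimension matches $\deg(\gen{x_-})-\deg(\gen{x_+})-1$. Throughout, genericity of $(\morsefun,\metric,\J)$ will be used to guarantee that (i) the moduli spaces $\modms{0,0}$ appearing below are cut out transversally, (ii) the pair $(\morsefun,\metric)$ is Morse-Smale, and (iii) the evaluation maps $\ev_{\pm\infty}$ on the strip moduli spaces are transverse to the stable/unstable manifolds of the Morse flow.

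For Case 1, standard Morse theory applied to the Morse-Smale pair $(\morsefun,\metric)$ gives that $\unst(\gen{x_-})\cap\st(\gen{x_+})$ is a smooth manifold of dimension $\deg(\gen{x_-})+(2-\deg(\gen{x_+}))-2=\deg(\gen{x_-})-\deg(\gen{x_+})$, and after quotienting by the $\rr$-action we get the expected dimension. For Cases 2 and 3, Proposition \ref{prop:16} combined with a generic choice of $\J$ (using the argument of Section 7 of \cite{MR1360618}, which goes through for $t$-dependent $\J$ even in the presence of branch jumps since $\delta=1$ on the relevant end) implies $\modms{0,0}$ is a smooth manifold of dimension $\ind(\gen{x_\mp})-1$ after dividing by $\rr$. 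A further generic perturbation of $\J$ makes $\ev_{\pm\infty}:\modms{0,0}\to S^2$ transverse to the smooth embedded stable/unstable submanifolds, so the fiber product is a smooth manifold whose dimension is
\begin{displaymath}
  \bigl(\ind(\gen{x_\mp})-1\bigr)+\bigl(2-\deg(\gen{x_\pm})\bigr)-2=\deg(\gen{x_-})-\deg(\gen{x_+})-1,
\end{displaymath}
using that the index of a branch-jump generator equals its degree.

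For Case 4, the first component (a single strip with both asymptotics being branch jumps) has $\delta=1$, so Proposition \ref{prop:16} gives $\ind=\ind(\gen{x_-})-\ind(\gen{x_+})$; dividing by $\rr$ yields the claimed dimension. For the second component (a broken configuration of two strips joined by a positive-time Morse trajectory), the two strip moduli spaces have dimensions $\ind(\gen{x_-})-1$ and $1-\ind(\gen{x_+})$ respectively. Generic transversality of $(\ev_{+\infty},\flow{\morsefun}{\cdot},\ev_{-\infty})$ makes the matching condition
\begin{displaymath}
  \flow{\morsefun}{s}\bigl(\ev_{+\infty}(\disc{u_1})\bigr)=\ev_{-\infty}(\disc{u_2})
\end{displaymath}
cut out a smooth submanifold of $\modms{0,0}\times\modms{0,0}\times\rr_{>0}$ of codimension $2$, giving total dimension $(\ind(\gen{x_-})-1)+(1-\ind(\gen{x_+}))+1-2=\deg(\gen{x_-})-\deg(\gen{x_+})-1$. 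Finally, one observes that since the two pieces of Case 4 impose different boundary/asymptotic conditions (one vs.\ two strip components), their union is automatically a manifold on each stratum.

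The main obstacle is the transversality package: one must simultaneously achieve surjectivity of the linearized $\dbar$-operators on each relevant stratum \emph{and} transversality of the evaluation maps with the Morse flow data. The first is the content of the genericity argument in \cite{alston-fciegl} (Section 7 of \cite{MR1360618} adapted to the branch-jump setting via the analytic setup of Section \ref{sec:analytic-setup}); the second is achieved by a standard jet-transversality argument after the first is in place, using that one can perturb $\J$ freely in a neighborhood of $t=1/2$ without affecting the asymptotics. Once these are in hand, the dimension count above is routine.
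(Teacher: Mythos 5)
First, note that the paper does not actually prove this lemma---it defers the proof to the companion paper \cite{alston-fciegl}---so your sketch can only be judged on its own terms. The overall strategy is certainly the intended one: combine the Fredholm index formula of Proposition \ref{prop:16} with transversality of the evaluation maps to the (un)stable manifolds, and count dimensions of fiber products case by case. Cases 1, 2, and 4 are handled correctly.

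However, your Case 3 computation is wrong as written, and this is precisely the case that governs $\floerdiff((\p,\q))$ in Section \ref{sec:calc-floer-cohom}. When $\gen{x_-}\in\crit(\morsefun)$ and $\gen{x_+}\in\bjstd$, the strip has an \emph{outgoing} branch jump at $+\infty$ and no branch jump at $-\infty$, so Proposition \ref{prop:16} gives index $-\ind(\gen{x_+})+2$ (here $\delta=0$ and the asymptotic index enters with a minus sign), hence the unparameterized moduli space $\modms{0,0}$ has dimension $1-\ind(\gen{x_+})$, not $\ind(\gen{x_+})-1$ as your formula $\ind(\gen{x_\mp})-1$ asserts. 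Moreover, the Morse factor in Case 3 is $\unst(\gen{x_-})$, of dimension $\deg(\gen{x_-})$, not $2-\deg(\gen{x_\pm})$. With your values the displayed equality yields $\deg(\gen{x_+})-\deg(\gen{x_-})-1$, which is not the claimed dimension; for example, for $\connorbit(\pmin,(\p,\q))$ it would give $-2$ rather than the correct $0$ (the paper's Lemma \ref{lemma:1} exhibits this space as a finite set of points). The corrected count $(1-\ind(\gen{x_+}))+\deg(\gen{x_-})-2=\deg(\gen{x_-})-\deg(\gen{x_+})-1$ does work, and indeed you use exactly the right value $1-\ind(\gen{x_+})$ for the second strip in Case 4, so this is a transcription slip rather than a conceptual error---but as written the step fails. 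Everything else (the codimension-$2$ matching condition in Case 4, the reliance on the time-dependence of $\J$ for transversality) is consistent with what \cite{alston-fciegl} is cited for.
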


\begin{definition}
  \label{dfn:13}
  The \textit{Floer differential}
  \begin{displaymath}
    \floerdiff:\floercpx{}\to\floercpx{}
  \end{displaymath}
  is defined on generators $\gen{x_+}$ by the formula
  \begin{displaymath}
    \floerdiff(\gen{x_+})=\sum_{\gen{x_-}}\# \connorbitstd \cdot \gen{x_-},
  \end{displaymath}
  where the sum is over all $\gen{x_-}$ such that $\dim \connorbitstd = 0$.
\end{definition}

\begin{definition}
  \label{dfn:12}
  The \textit{Floer cohomology} $\floercoh{}$ of $\imm$ with $\zz_2$-coefficients is the cohomology of the complex $(\floercpx{},\floerdiff)$.
\end{definition}

By Theorem \ref{thm:5}, $\floercoh{}$ is well-defined and independent of the generic choice of data used to define it.
The following definition specifies what it means for data to be generic.
\begin{definition}
  \label{dfn:4}
  The data $\morsefun,\metric,\J=\depj{t}{t}$ is generic if the following conditions are satisfied:
  \begin{enumerate}
  \item $\morsefun,\metric$ is Morse-Smale.
  \item $\p,\q\in S^2$ are contained in the top-dimensional stable and unstable manifolds of $(\morsefun,\metric)$.
  \item For all $k_0,k_1,\metabrjumps$, $\modms{k_0,k_1}$ is regular at non-constant strips.
  \item For all $k_0,k_1,\metabrjumps$, $\ev_{\pm\infty}:\modms{k_0,k_1}\to S^2\amalg \bjstd$ is transverse at non-constant strips to all stable and unstable manifolds of $(\morsefun,\metric)$.
  \item For all $k_0,k_1,\metabrjumps,k_0',k_1',\metabrjumps'$, $\ev_{+\infty}:\modms{k_0,k_1}\to S^2\amalg\bjstd$ is transverse to $\ev_{-\infty}:\modmscust{k_0',k_1'}{\str;\metabrjumps';\J}\to S^2\amalg\bjstd$ at non-constant strips.
  \end{enumerate}
\end{definition}

By the results of Section \ref{sec:class-holom-curv}, taking $\J=\depj{t}{t}$ with $J_t=\acs$ for all $t$ and $\morsefun,\metric$ generic will satisfy the conditions of the definition.

\subsection{Calculation of Floer cohomology}
\label{sec:calc-floer-cohom}

Let
\begin{displaymath}
  f:S^2\to \rr
\end{displaymath}
be a function with two critical points, one at $(s,e^{i\theta})=(0,1)=:\pmax$ which is a global max and one at $(s,e^{i\theta})=(0,-1):=\pmin$ which is a global min.
Here, $(s,e^{i\theta})$ are cylindrical coordinates from equation \eqref{eq:5}.
Let $\metric$ be the standard metric induced by the embedding $S^2\subset\rr^3$ from \eqref{eq:5}; we may assume $(f,\metric)$ is Morse-Smale.

The Floer cochain complex with this data becomes
\begin{displaymath}
  \floercpx{}=\floercpx{-1}\oplus \floercpx{0} \oplus \floercpx{2} \oplus \floercpx{3}
\end{displaymath}
with
\begin{eqnarray*}
  \label{eq:6}
  \floercpx{-1} &=& \zz_2\cdot (\p,\q),\\
  \floercpx{0} &=& \zz_2\cdot \pmin,\\
  \floercpx{2} &=& \zz_2\cdot \pmax,\\
  \floercpx{3} &=& \zz_2 \cdot (\q,\p).
\end{eqnarray*}

For degree reasons, the only nontrivial things to calculate are $\floerdiff((\p,\q))$ and $\delta(\pmax)$.
By Definitions \ref{dfn:10} and \ref{dfn:12}, $\floerdiff((\p,\q))$ counts elements of
\begin{displaymath}
  \connorbit(\pmin,(\p,\q))= \unst(\pmin)\times_{\ev_{-\infty}}\modms{0,0}.
\end{displaymath}
Since $\unst(\pmin)=\sett{\pmin}$, the $-\infty$ end of the strip must pass through $\pmin$.
The $+\infty$ end of the strip must have a branch jump of type $(\p,\q)$.
Propositions \ref{prop:4} and \ref{prop:5} can be used to find all such strips.

Similarly, $\floerdiff(\pmax)$ is defined by counting elements of
\begin{displaymath}
  \connorbit((\q,\p),\pmax)= \modms{0,0}\times_{ev_{+\infty}}\st(\pmax).
\end{displaymath}
This space consists of holomorphic strips such that the $-\infty$ end has a branch jump of type $(\q,\p)$ and the $+\infty$ end passes through $\pmax$.

\begin{lemma}
  \label{lemma:1}
  For the immersion $\imm:S^2\to\lagstd$ with $r\in\sett{1,\ldots,N}$,
  \begin{displaymath}
   \#\connorbit(\pmin,(\p,\q))=\#\connorbit((\p,\q),\pmax)=2^{r-1}.
  \end{displaymath}
\end{lemma}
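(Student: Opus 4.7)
The plan is to use the explicit classification in Propositions~\ref{prop:4} and~\ref{prop:5} to enumerate the relevant strips. By Proposition~\ref{prop:5} and Corollary~\ref{cor:2}, a strip in $\connorbit(\pmin,(\p,\q))$ arises from a marked disc $(v,\{z_0,z_1\})$ in which $v$ has the form given in Proposition~\ref{prop:4} with $k=0$, the branch marked point is $z_1$ (of outgoing type $(\p,\q)$, corresponding to the $+\infty$ end), and $z_0$ is a non-branch marked point (corresponding to the $-\infty$ end). The branch condition forces $v(z_1)$ to be the singular point of $\lagstd$, i.e.\ $h(z_1)=1$, and the Morse constraint $\ell(z_0)=\pmin$ forces $v(z_0)=\imm(\pmin)$, which in particular gives $h(z_0)=-1$ since $\zc(\imm(\pmin))=-r$. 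I would gauge-fix the $\rr$-translation freedom (equivalently, the residual one-dimensional subgroup of $\mathrm{Aut}(\dd)$ fixing $\{z_0,z_1\}$) by taking $h=\mathrm{id}_\dd$, which then forces $z_0=-1$, $z_1=1$.

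After this gauge fixing, Proposition~\ref{prop:4} parameterizes $v$ by the discrete factorization choices $\xi_j\eta_j=(rz-j)/(jz-r)$ for $j=1,\ldots,r-1$ together with a phase $e^{i\theta}\in S^1$. Each $(rz-j)/(jz-r)$ is a degree-one Blaschke product on $\dd$: it has a single zero at $z=j/r\in\dd$, no poles on $\overline{\dd}$, and modulus $1$ on $\bdy\dd$. A factorization into two Blaschke products forces $\deg(\xi_j)+\deg(\eta_j)=1$, giving exactly two choices per $j$ (either $\xi_j$ or $\eta_j$ carries the unique zero, the other being a unimodular constant). This accounts for $2^{r-1}$ discrete components of the moduli space, each regular by Proposition~\ref{prop:1}.

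Within each component it remains to verify that $v(-1)=\imm(\pmin)$ determines $e^{i\theta}$ uniquely. Denoting by $\Xi$ the product of the square-root factors in Proposition~\ref{prop:4}, the two nontrivial coordinate equations are $e^{i\theta}\xi(-1)\Xi(-1)=\imm(\pmin)_{\za}$ and $e^{-i\theta}\eta(-1)\Xi(-1)=\imm(\pmin)_{\zb}$. The moduli match automatically: $|\xi_j(-1)|=|\eta_j(-1)|=1$ as boundary values of Blaschke products, and $|\Xi(-1)|^2=\prod_{j=1}^{N}(r+j)=|\imm(\pmin)_{\za}|\cdot|\imm(\pmin)_{\zb}|$. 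The two arguments are consistent because their sum $\arg\bigl(\xi(-1)\eta(-1)\Xi(-1)^2\bigr)$ reduces, via $\xi_j(-1)\eta_j(-1)=(-r-j)/(-j-r)=1$ and $\Xi(-1)^2=\prod_{j=1}^{N}(-r-j)$, to $\arg\bigl(\imm(\pmin)_{\za}\imm(\pmin)_{\zb}\bigr)$; this last identity is precisely the defining equation $\za\zb=\prod_{j=1}^{N}(\zc-j)$ of $\mnfld$ evaluated at $\imm(\pmin)$. Hence $\theta$ is uniquely determined mod $2\pi$, and there are exactly $2^{r-1}$ strips. The count of $\connorbit((\q,\p),\pmax)$ (the case relevant to $\floerdiff(\pmax)$) follows by a symmetric argument, using the crucial observation that $\pmax$ also lies in the Lefschetz fiber over $\zc=-r$, with the incoming/outgoing roles and the branch types at $\pm\infty$ reversed. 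The main obstacle is the bookkeeping required for this boundary-value consistency check; the combinatorial factor $2^{r-1}$ then falls out cleanly from the Blaschke factorization.
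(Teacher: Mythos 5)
Your proof is correct and follows essentially the same route as the paper: reduce to the classification of Propositions~\ref{prop:4} and~\ref{prop:5}, identify the $2^{r-1}$ components coming from the Blaschke factorizations of $(rh-j)/(jh-r)$, and show each component contributes exactly one strip meeting the evaluation constraint. The only presentational difference is that where the paper asserts $\ev_{-\infty}$ restricts to a diffeomorphism from each component onto the cylindrical coordinate patch, you verify the same fact by explicitly solving for $e^{i\theta}$ and checking consistency via the defining equation of $\mnfld$ (and you correctly work with $\connorbit((\q,\p),\pmax)$, fixing a typo in the lemma's statement).
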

\begin{proof}
  Let $\strip{u}$ be a strip contributing to $\connorbit(\pmin,(\p,\q))$.
  Think of the domain of $u$ as $\dd\setminus\sett{z_0,z_1}$, with $z_0=1$ corresponding to $+\infty$ and $z_1$ corresponding to $-\infty$.
  Write $u=(f,g,rh)$ with respect to this identification.
  The only branch point is at $1$, so $h^{-1}(1)=1$.
  Proposition \ref{prop:5} then implies that up to reparameterization $h(z)=z$ and the moduli space $\modms{0,0}$ has $2^{r-1}$ components, each diffeomorphic $S^1\times\rr$.
  The $S^1$ corresponds to changing $e^{i\theta}$ in the proposition, and the $\rr$ corresponds to where the marked point $z_1$ is located on the boundary arc $\bdy\dd\setminus\sett{1}$. 
  In fact, the map
  \begin{displaymath}
    \ev_{-\infty}:\modms{0,0}\to S^2
  \end{displaymath}
  restricts to a diffeomorphism from each component onto the cylindrical coordinate patch in $S^2$ defined in \eqref{eq:1}.
  Thus exactly one element of each component of $\modms{0,0}$ satisfies $\ev_{-\infty}(\strip{u})=\pmin$, and hence
  \begin{displaymath}
    \#\connorbit(\pmin,(\p,\q))=2^{r-1}.
  \end{displaymath}
  
  The proof that $\#\connorbit((\p,\q),\pmax)=2^{r-1}$ is similar.
\end{proof}

An immediate corollary is the main theorem of this paper.
\begin{theorem}
  \label{thm:1}
  The self-Floer cohomology with $\zz_2$-coefficients of the immersion
  \begin{displaymath}
    \imm:\lagstd\to\mnfld,\quad r\in\sett{1,\ldots,N}
  \end{displaymath}
  is
  \begin{displaymath}
    \floercoh{}=\left\{
      \begin{array}{ll}
        0 & r=1,\\
        (\zz_2)^4 & r>1.
      \end{array}
    \right.
  \end{displaymath}
  More precisely, in case $r>1$, the cohomology is one-dimensional in degrees $-1,0,2,3$; and $0$ elsewhere.
\end{theorem}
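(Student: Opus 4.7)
The plan is to observe that Theorem \ref{thm:1} follows almost immediately from Lemma \ref{lemma:1} once the complex is set up carefully, so the proof is essentially a bookkeeping exercise in degrees and parities. First I would fix the explicit data $(f,g,\J)$ described just before Lemma \ref{lemma:1}: a Morse function with exactly two critical points, the round metric, and $\J=\depj{t}{t}$ with $J_t=\acs$. Corollary \ref{cor:1} together with a generic perturbation of $(f,g)$ ensures this is a valid (generic) choice of auxiliary data in the sense of Definition \ref{dfn:4}, so Theorem \ref{thm:5} applies.

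The complex $\floercpx{}$ is then concentrated in the four degrees $-1,0,2,3$, each one-dimensional over $\zz_2$, as written out just above Lemma \ref{lemma:1}. Since $\floerdiff$ raises degree by $1$ and $\floercpx{1}=\floercpx{4}=0$, the only matrix entries that can be non-zero are
\begin{displaymath}
\floerdiff\colon \floercpx{-1}\to\floercpx{0},\qquad \floerdiff\colon \floercpx{2}\to\floercpx{3}.
\end{displaymath}
By Definitions \ref{dfn:10} and \ref{dfn:13}, the first is $\#\connorbit(\pmin,(\p,\q))\cdot \pmin$ and the second is $\#\connorbit((\q,\p),\pmax)\cdot(\q,\p)$, both read mod $2$.

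Now Lemma \ref{lemma:1} evaluates both of these counts to $2^{r-1}$. The theorem then splits into two cases: if $r=1$ the counts equal $1\pmod{2}$, so both nontrivial components of $\floerdiff$ are isomorphisms $\zz_2\to\zz_2$, pairing $(\p,\q)$ with $\pmin$ and $\pmax$ with $(\q,\p)$; the complex is acyclic and $\floercoh{}=0$. If $r>1$ the counts are even and hence vanish mod $2$, so $\floerdiff=0$ identically and $\floercoh{}=\floercpx{}\cong(\zz_2)^4$, with one generator in each of the degrees $-1,0,2,3$.

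There is essentially no obstacle left at this stage: the genuine work (the classification of strips via the Lefschetz fibration, regularity, and the identification of the evaluation map $\ev_{-\infty}$ with a chart on $S^2$) was already carried out in Sections \ref{sec:class-holom-curv} and in Lemma \ref{lemma:1}. The only point where one must be slightly careful is to confirm that the chosen $\J$ is generic in the sense of Definition \ref{dfn:4}, which follows from Proposition \ref{prop:1}/Corollary \ref{cor:1} together with a standard small perturbation of $(f,g)$ so that $\pmin,\pmax$ and the evaluation maps behave transversely; no new moduli-theoretic input is required.
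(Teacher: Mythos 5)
Your proposal is correct and is essentially identical to the paper's own argument: the paper deduces Theorem \ref{thm:1} as an "immediate corollary" of Lemma \ref{lemma:1} by exactly this degree bookkeeping, with the only nontrivial differentials being $\floercpx{-1}\to\floercpx{0}$ and $\floercpx{2}\to\floercpx{3}$, each counting $2^{r-1}$ strips mod $2$. You also correctly read the second count as $\#\connorbit((\q,\p),\pmax)$ (the lemma's displayed formula has a typo writing $(\p,\q)$ there), and your remarks on genericity of the auxiliary data match the paper's appeal to Corollary \ref{cor:1} and Definition \ref{dfn:4}.
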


\section{Generalization to other immersed spheres}
In Section \ref{sec:geom-lefsch-fibr} we use the geometry of Lefschetz fibrations to construct a wider class of immersed Lagrangian spheres in $\mnfld$.
In Section \ref{sec:floer-cohom-immers} we explain how the calculations used to prove Theorem \ref{thm:1} can be generalized to calculate the self-Floer cohomology for spheres in this larger class.
\subsection{Geometry of the Lefschetz fibration}
\label{sec:geom-lefsch-fibr}
Recall that we have a Lefschetz fibration
\begin{displaymath}
  \leffib:\mnfld\to\cc,\ (\za,\zb,\zc)\mapsto\zc.
\end{displaymath}
Let $\text{Critv}(\pi)=\sett{1,\ldots,N}$ be the critical values of $\pi$ and let $\text{Crit}(\pi)\subset \mnfld$ be the critical points.
Away from $\text{Crit}(\pi)$, $\mnfld$ has a canonical vertical tangent bundle $T^v\mnfld$, and a canonical horizontal tangent bundle $T^h\mnfld$ defined by
\begin{displaymath}
  T_p^h\mnfld=\set{V\in T_p\mnfld}{\iota_V\sympl|T_p^v\mnfld=0}.
\end{displaymath}
The horizontal tangent space defines parallel translation maps
\begin{displaymath}
  PT_\gamma: \mnfldcust{\gamma(a)}\to \mnfldcust{\gamma(b)},
\end{displaymath}
where $\gamma:[a,b]\to\cc$ is a piece-wise smooth map.

Let $\gamma:[a,b]\to \cc$ be a smooth embedded path with $\gamma^{-1}(\text{Critv}(\pi))=\sett{b}$, and let $q\in\text{Crit}(\pi)$ be the unique critical point in $\mnfldcust{\gamma(b)}$; $\gamma$ is called a \textit{vanishing path}.
Let
\begin{displaymath}
  V_{\gamma}=\set{p\in \mnfldcust{\gamma(a)}}{\lim_{t\to b}{PT}_{\gamma|[a,t]}(p)=q}.
\end{displaymath}
$V_{\gamma}$ is the \textit{vanishing cycle} associated to the path $\gamma$, and $V_{\gamma}$ is a Lagrangian sphere in the symplectic submanifold $\mnfldcust{\gamma(a)}$.
Let 
\begin{displaymath}
  \Delta_\gamma=\bigcup_{a\leq t<b} V_{\gamma|[t,b]}\bigcup \sett{q}.
\end{displaymath}
$\Delta_\gamma$ is the \textit{Lefschetz thimble} associated to $\gamma$ and it is a Lagrangian disk in $\mnfld$.
The following is a standard fact, see for example \cite{seidel-fcpclt}.
\begin{lemma}
  \label{lemma:11}
  Let $L\subset \mnfld$ be a submanifold, and assume that $\pi|L:L\to \cc$ is a fibration over some embedded curve $C\subset\cc$.
  Assume furthermore that $L_c=L\cap \mnfldcust{c}$ is a Lagrangian submanifold of $\mnfldcust{c}$ for every $c\in C$.
  Then $L$ is Lagrangian if and only if the parallel transport maps over $C$ map the $L_c$'s into the $L_c$'s.
\end{lemma}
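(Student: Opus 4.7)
The plan is to reduce the Lagrangian condition $\omega|_L = 0$ to a pointwise statement about the horizontal distribution, and then observe that this pointwise statement is precisely the infinitesimal version of parallel-transport invariance. Throughout I will work away from critical points of $\pi$ (the hypothesis that $\pi|L \to C$ is a fibration implies $C$ avoids critical values), so that the decomposition $T\mnfld = T^v\mnfld \oplus T^h\mnfld$ is well-defined.

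First I would check dimensions. The fibers of $\pi$ are complex curves, so real two-dimensional and symplectic; a Lagrangian $L_c$ inside such a fiber is real one-dimensional. Since $C$ is a real curve in $\cc$ and $\pi|L$ is a fibration over it, $L$ is real two-dimensional, which matches $\frac{1}{2}\dim_\rr \mnfld = 2$. Hence $L$ is Lagrangian if and only if $\omega|_L = 0$ as a two-form. Now fix $p \in L$ and pick a basis $\{v,h\}$ of $T_p L$ with $v \in T_p L_c$ and $h$ any lift projecting isomorphically under $d\pi$ to a generator of $T_{\pi(p)}C$. The only nontrivial condition is $\omega(v,h) = 0$. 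Decompose $h = h_H + h_V$ with $h_H \in T^h_p\mnfld$ and $h_V \in T^v_p\mnfld$. By the very definition of $T^h\mnfld$ as the $\omega$-orthogonal complement of $T^v\mnfld$, we have $\omega(v,h_H) = 0$, so
\begin{equation*}
  \omega(v,h) \;=\; \omega(v,h_V) \;=\; \omega_{\mnfldcust{\pi(p)}}(v,h_V),
\end{equation*}
where $\omega_{\mnfldcust{\pi(p)}}$ is the symplectic form on the fiber.

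Since $T_p L_c \subset T_p \mnfldcust{\pi(p)}$ is a Lagrangian line in a two-dimensional symplectic space, its symplectic orthogonal inside the fiber tangent space equals itself. Therefore $\omega_{\mnfldcust{\pi(p)}}(v,h_V) = 0$ if and only if $h_V \in T_p L_c$, equivalently $h_H = h - h_V \in T_p L$. In words, $L$ is Lagrangian at $p$ precisely when the horizontal lift of every tangent vector of $C$ at $\pi(p)$ is already tangent to $L$.

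Finally I would translate this infinitesimal statement into the stated global one. The parallel transport $PT_\gamma$ for a path $\gamma$ in $C$ is by definition the flow of the horizontal lift of $\dot\gamma$; hence parallel transport carries $L_c$'s into $L_c$'s if and only if this horizontal lift is tangent to $L$ at every point of $L$. Combining with the previous paragraph gives both implications. The only place requiring any care is the tangential–horizontal decomposition, which is why restriction to the smooth locus of $\pi$ matters; otherwise every step is a direct unravelling of definitions, so I do not expect any real obstacle.
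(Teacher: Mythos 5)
Your argument is correct and complete; it is the standard proof of this fact, which the paper itself does not prove but simply cites to Seidel's book, so there is no alternative route to compare against. One small imprecision: what the fibration hypothesis actually forces is that $L$ (rather than $C$) avoids the critical \emph{points} of $\pi$ --- since $d\pi$ vanishes there, $\pi|L$ could not be a submersion at such a point --- and that is exactly what you need for the vertical--horizontal splitting to be defined along $L$.
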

Note that the hypothesis that $L_c$ is Lagrangian is always true because the real dimension of $\mnfldcust{c}$ is 2.

\begin{lemma}
  \label{lemma:18}
  $\hamone=\frac{1}{2}(|\za|^2-|\zb|^2)$ is invariant under parallel transport; in other words, if $X$ is a horizontal vector field then $X(\hamone)=0$.
\end{lemma}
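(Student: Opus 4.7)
The plan is to exploit the fact that $H_1$ generates a Hamiltonian flow which happens to be vertical with respect to the Lefschetz fibration, and then use the compatibility between the symplectic form and the horizontal/vertical splitting.

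First I would write down the Hamiltonian vector field $\hamvf{\hamone}=-i\za\partial_{\za}+i\zb\partial_{\zb}$ (already computed in the earlier lemma showing $\{\hamone,\hamtwo\}=0$) and observe that it has no $\partial_{\zc}$ component. Since $\leffib(\za,\zb,\zc)=\zc$, this means $d\leffib(\hamvf{\hamone})=0$, so $\hamvf{\hamone}$ is tangent to the fibers of $\leffib$; that is, $\hamvf{\hamone}$ is a section of $T^v\mnfld$ away from the critical set.

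Next, suppose $X$ is horizontal. By definition of $T^h\mnfld$, $\iota_X\sympl$ vanishes on $T^v\mnfld$, so in particular
\begin{displaymath}
  \sympl(X,\hamvf{\hamone})=0.
\end{displaymath}
But $\sympl(\hamvf{\hamone},\cdot)=d\hamone(\cdot)$ by our sign convention, so
\begin{displaymath}
  X(\hamone)=d\hamone(X)=\sympl(\hamvf{\hamone},X)=-\sympl(X,\hamvf{\hamone})=0,
\end{displaymath}
which is the claim. No serious obstacle is expected; the only point to be careful about is that this argument only makes sense away from $\mathrm{Crit}(\leffib)$, which is fine because horizontal vector fields and parallel transport are only defined on $\mnfld\setminus\mathrm{Crit}(\leffib)$ to begin with. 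Invariance under parallel transport then follows, since $\frac{d}{dt}\hamone(\gamma(t))=X(\hamone)=0$ along any horizontal lift $\gamma$.
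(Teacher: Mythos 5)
Your proof is correct and follows essentially the same route as the paper: observe that $\hamvf{\hamone}$ is vertical, then use that horizontal vectors are $\sympl$-orthogonal to the vertical subspace to conclude $X(\hamone)=\sympl(\hamvf{\hamone},X)=0$. The additional remarks about working away from $\mathrm{Crit}(\leffib)$ and integrating along horizontal lifts are fine but not needed beyond what the paper records.
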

\begin{proof}
  $\hamvf{\hamone}(\za,\zb,\zc)=(i\za,-i\zb,0)$ is a vertical vector field.
  Hence
  \begin{displaymath}
    X(\hamone)=d\hamone(X)=\omega(\hamvf{\hamone},X)=0
  \end{displaymath}
  for any horizontal $X$.
\end{proof}

\begin{lemma}
  Let $\gamma:[a,b]\to\cc$ be a vanishing path.
  Then
  \begin{displaymath}
    \Delta_\gamma=\set{(\za,\zb,\gamma(t))\in \mnfld}{t\in[a,b],\ |\za|=|\zb|}.
  \end{displaymath}
\end{lemma}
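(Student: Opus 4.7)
The approach is to use Lemma \ref{lemma:18} (invariance of $\hamone$ under parallel transport) to identify the Lefschetz thimble with the level set $\hamone = 0$ lying over the curve $\gamma$, and then match circles via a dimension/connectedness argument.

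First I would observe that the critical point $q\in\mnfldcust{\gamma(b)}$ must satisfy $\za=\zb=0$: indeed $q$ is a singular point of the fiber $\sett{\za\zb=0,\zc=\gamma(b)}$ (since $\gamma(b)\in\critv(\pi)$), which forces both coordinates to vanish. In particular $\hamone(q)=0$. Now fix $p\in V_{\gamma|[t,b]}$ for some $t\in[a,b)$, and let $p(s)$ denote its parallel transport along $\gamma|[t,b]$, so $p(t)=p$ and $p(s)\to q$ as $s\to b$. By Lemma \ref{lemma:18}, $\hamone(p(s))$ is constant in $s$, hence $\hamone(p)=\hamone(q)=0$, i.e.\ $|\za|=|\zb|$ at $p$. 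This shows
\begin{displaymath}
  \Delta_\gamma\subset\set{(\za,\zb,\gamma(t))\in\mnfld}{t\in[a,b],\ |\za|=|\zb|}.
\end{displaymath}

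For the reverse inclusion, I would analyze the right hand side fiberwise. For $t\in[a,b)$, the fiber $\mnfldcust{\gamma(t)}$ equals $\sett{\za\zb=c_t}$ with $c_t=\prod_{k=1}^N(\gamma(t)-k)\neq 0$, so we can parameterize it by $\za\in\cc^*$ with $\zb=c_t/\za$. Then $\hamone=\tfrac12(|\za|^2-|c_t|^2/|\za|^2)$ depends only on $|\za|$, and $\hamone^{-1}(0)\cap\mnfldcust{\gamma(t)}$ is the single circle $\sett{|\za|=\sqrt{|c_t|}}$. By the first paragraph, the vanishing cycle $V_{\gamma|[t,b]}$ is contained in this circle; since $V_{\gamma|[t,b]}$ is itself a smooth embedded circle (a vanishing cycle in a fiber of complex dimension $1$), the two circles must coincide. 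At $t=b$, both sides reduce to $\sett{q}$.

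Assembling these fiberwise identifications,
\begin{displaymath}
  \Delta_\gamma=\bigcup_{a\leq t<b}V_{\gamma|[t,b]}\cup\sett{q}=\bigcup_{a\leq t<b}\sett{(\za,\zb,\gamma(t))\in\mnfld\,:\,|\za|=|\zb|}\cup\sett{q},
\end{displaymath}
which is exactly the right hand side. The only potential obstacle is the claim that $V_{\gamma|[t,b]}$ fills the entire $\hamone=0$ circle rather than a proper subset; this is handled by the standard fact that a vanishing cycle in a Lefschetz fibration with one-complex-dimensional fibers is a smooth $S^1$, so connectedness forces equality with the ambient circle. All other steps are direct applications of Lemma \ref{lemma:18} and the explicit form of the fibers of $\leffib$.
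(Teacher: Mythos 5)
Your proof is correct and follows essentially the same route as the paper: both use Lemma \ref{lemma:18} to show $\hamone$ vanishes on $\Delta_\gamma$, identify the locus $\hamone=0$ in each smooth fiber as a single circle, and conclude that the vanishing cycle must be that whole circle. Your version merely makes the final connectedness step (an embedded circle contained in a circle must equal it) more explicit than the paper does.
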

\begin{proof}
  $\hamone=0$ on $\text{Crit}(\pi)$, so $\hamone|\Delta_\gamma=0$ by Lemma \ref{lemma:18}.
  The intersection of the hypersurface $\hamone=0$ with a fiber $\mnfldcust{\gamma(t)}$ is precisely the set of points in $\mnfld$ of the form
  \begin{displaymath}
    (\za,\zb,\gamma(t))
  \end{displaymath}
  with $|\za|=|\zb|$.
  This is a circle, and hence must be the vanishing cycle in the fiber $\mnfldcust{\gamma(t)}$.
  Since $\Delta_{\gamma}$ is the union of all vanishing cycles over all points in the image of $\gamma$, the result follows.
\end{proof}

Now let $\gamma:[a,b]\to\cc$ be a smooth path with $\gamma^{-1}(\text{Critv}(\pi))=\sett{a,b}$.
Assume $\gamma$ is embedded, except possibly with $\gamma(a)=\gamma(b)$, in which case we require $-\dot\gamma(a)$ and $\dot\gamma(b)$ to not be positive multiples of each other.
The previous lemma implies that $\gamma$ is a \textit{matching path}, meaning that the vanishing cycles coming from the critical value $\gamma(a)$ are the same as the vanishing cycles coming from the critical value $\gamma(b)$.
The Lagrangian submanifold
\begin{displaymath}
    \Sigma_\gamma=\set{(\za,\zb,\gamma(t))\in \mnfld}{t\in[a,b],\ |\za|=|\zb|}
\end{displaymath}
is the \textit{matching cycle} associated to the matching path $\gamma$.
Clearly, if $\gamma(a)\neq \gamma(b)$ then $\Sigma_\gamma$ is an embedded Lagrangian sphere.
If $\gamma(a)=\gamma(b)$ then we get an immersed Lagrangian sphere:
\begin{lemma}
  \label{lemma:17}
  Let $\gamma$ be a matching path with $\gamma(a)=\gamma(b)$ and such that $-\dot\gamma(a)$ and $\dot\gamma(b)$ are not positive real multiples of each other.
  Then the matching cycle $\Sigma_\gamma$ is an immersed Lagrangian sphere with one transverse self-intersection point.
\end{lemma}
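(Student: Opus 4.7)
The plan is to decompose $\Sigma_\gamma$ as the union of two Lefschetz thimbles meeting at the critical point, check that the gluing is topologically $S^2$, and use the standard Morse normal form near the critical point to establish both smoothness at the two apex preimages and transversality of the two sheets.

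First, pick any interior parameter $m\in(a,b)$ and set $\gamma_1:=\gamma|_{[a,m]}$ (traversed backward so it terminates at $\gamma(a)$) and $\gamma_2:=\gamma|_{[m,b]}$. Both are vanishing paths, and by the preceding lemma
\begin{displaymath}
  \Delta_{\gamma_i}=\set{(\za,\zb,\gamma_i(t))\in\mnfld}{|\za|=|\zb|},
\end{displaymath}
so directly from the definitions $\Sigma_\gamma=\Delta_{\gamma_1}\cup\Delta_{\gamma_2}$. The two thimble boundaries coincide with the single vanishing circle $\sett{|\za|=|\zb|}\subset\mnfldcust{\gamma(m)}$, while both apices are the unique critical point $q\in\mnfldcust{\gamma(a)}=\mnfldcust{\gamma(b)}$. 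Gluing two disks along a common equator yields a topological $S^2$ whose two apex preimages both map to $q$; this will be the self-intersection.

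Next, work in a local holomorphic chart near $q$. Writing $(\zc-1)\cdots(\zc-N)=c(\zc-k)+O((\zc-k)^2)$ with $c=\prod_{j\neq k}(k-j)\neq 0$, and substituting $\za=w_1+iw_2$, $\zb=w_1-iw_2$, the surface $\mnfld$ acquires a chart in which $\leffib=k+(w_1^2+w_2^2)/c+\text{h.o.t.}$ A short calculation shows that $|\za|-|\zb|=0$ is equivalent to $(w_1,w_2)\in e^{i\phi}\rr^2$ for some $\phi$. Combining this with Lemma \ref{lemma:18} (which says $|\za|=|\zb|$ is preserved under horizontal parallel transport) and solving the fiber equation $w_1^2+w_2^2=cte^{i\alpha}$, the vanishing cycle over $\zc=k+te^{i\alpha}$ is seen to be $\set{e^{i(\alpha+\arg c)/2}(x_1,x_2)}{x_1^2+x_2^2=|c|t}$, and the thimble over the incoming ray from direction $e^{i\alpha}$ is the real half-plane $\set{e^{i(\alpha+\arg c)/2}(x_1,x_2)}{(x_1,x_2)\in\rr^2,\ x_1^2+x_2^2\leq \epsilon}$. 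In particular each thimble is smoothly embedded through its apex, with tangent plane $e^{i(\alpha+\arg c)/2}\rr^2\subset\cc^2$; this certifies smoothness of the parameterization of $\Sigma_\gamma$ at both apex preimages.

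Setting $\alpha_1=\arg(-\dot\gamma(a))$ and $\alpha_2=\arg(\dot\gamma(b))$, the hypothesis becomes $\alpha_1\not\equiv\alpha_2\pmod{2\pi}$, which is exactly the condition that $e^{i(\alpha_1+\arg c)/2}\rr^2$ and $e^{i(\alpha_2+\arg c)/2}\rr^2$ meet only at $0$ and jointly span $T_q\mnfld$. Hence the two sheets of $\Sigma_\gamma$ at $q$ are transverse. Away from $q$, $\gamma$ is embedded on $(a,b)$, so the fibers along the two thimbles are otherwise disjoint, and each fiber contains the vanishing circle as an embedded submanifold; thus $\Sigma_\gamma\setminus\sett{q}$ is embedded. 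The Lagrangian condition at smooth points is built into the thimble description (or recovered from Lemma \ref{lemma:11}). The main point to pin down carefully is the local Morse reduction---once the standard model $w_1^2+w_2^2$ is in force and the parallel-transport interpretation of $|\za|=|\zb|$ is used, the smoothness at the apex and the tangent-plane computation both fall out; everything else is formal bookkeeping.
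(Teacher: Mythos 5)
Your proposal is correct, and it follows the same overall strategy as the paper---decompose $\Sigma_\gamma$ into two Lefschetz thimbles glued along the vanishing circle over an interior point of $\gamma$, observe that this is a sphere with both apex preimages mapping to the critical point $q$, handle the Lagrangian condition and embeddedness away from $q$ by the parallel-transport description, and reduce transversality at $q$ to a comparison of the two thimbles' tangent planes there. The difference lies in how those tangent planes are computed. The paper (Lemma \ref{lemma:19}) parameterizes each thimble globally by $(e^{i\theta}\sqrt{\gamma-1}\cdots\sqrt{\gamma-N},\,e^{-i\theta}\sqrt{\gamma-1}\cdots\sqrt{\gamma-N},\,\gamma)$ and takes the limit of the normalized tangent vectors at the apex, obtaining $T_q\Delta_\gamma=\Span_\rr\sett{(\xi,\xi,0),(i\xi,-i\xi,0)}$ with $\xi$ proportional to $\sqrt{\dot\gamma(1)}$. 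You instead pass to the local holomorphic Morse model $w_1^2+w_2^2=c(\zc-k)+O((\zc-k)^2)$ and identify the tangent plane with a totally real plane $e^{i(\alpha+\arg c)/2}\rr^2$; under $\za=w_1+iw_2$, $\zb=w_1-iw_2$ this is exactly the paper's plane, so the two computations agree. Your route is more model-independent (it would work for any Lefschetz fibration, whereas the paper exploits the explicit algebraic form of $\mnfld$), at the mild cost---which you correctly flag---of verifying that the higher-order terms in the normal form and the curvature of $\gamma$ do not change the tangent plane at the apex; the paper's limit computation in Lemma \ref{lemma:19} is the analogous step there. Both arguments then use the same linear-algebra fact that two planes of the form $e^{i\phi}\rr^2\subset\cc^2$ are either equal or transverse, equality occurring precisely when the terminal directions $-\dot\gamma(a)$ and $\dot\gamma(b)$ are positive real multiples of one another.
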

The fact that the image intersects itself transversely at the self-intersection point follows from the next lemma.
\begin{lemma}
  \label{lemma:19}
  Let $\gamma:[0,1]\to\cc$ be a vanishing path.
  Let $j=\gamma(1)$ be the critical value and let $q\in \mnfld$ be the critical point with $\pi(q)=j$.
  Let 
  \begin{displaymath}
    \xi=\sqrt{\dot\gamma(1)}\sqrt{j-1}\cdots\sqrt{j-(j-1)}\sqrt{j-(j+1)}\cdots\sqrt{j-N}\in\cc.
  \end{displaymath}
  Then
  \begin{displaymath}
    T_q\Delta_{\gamma}=\Span_\rr\sett{(\xi,\xi,0),\ (i\xi,-i\xi,0)}.
  \end{displaymath}
\end{lemma}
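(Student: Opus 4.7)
The plan is to parameterize $\Delta_\gamma$ explicitly in local holomorphic coordinates near $q$ and read off the tangent plane by direct differentiation at $q$.

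First I would set up convenient local coordinates. Since $P'(j)=C':=\prod_{k\neq j}(j-k)\neq 0$, the projection $\mnfld\to\cc^2$, $(\za,\zb,\zc)\mapsto(\za,\zb)$, is a local biholomorphism near $q$; the implicit function theorem gives
\begin{displaymath}
  \zc = j + \frac{\za\zb}{C'} + O\bigl((\za\zb)^2\bigr)
\end{displaymath}
on $\mnfld$ near $q$. In particular $T_q\mnfld=\sett{(v_1,v_2,0):v_i\in\cc}$, confirming that $T_q\Delta_\gamma$ lies in the $\zc=0$ slice.

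Next I would use the explicit description $\Delta_\gamma=\set{(\za,\zb,\gamma(t)):|\za|=|\zb|,\ t\in[0,1]}$ together with the expansion $\gamma(t)-j=\dot\gamma(1)(t-1)+O((t-1)^2)$. Writing $\za=re^{i\alpha}$, $\zb=re^{i\beta}$, the thimble condition $\za\zb=P(\zc)$ becomes
\begin{displaymath}
  r^2e^{i(\alpha+\beta)} = C'\dot\gamma(1)(t-1)+O((t-1)^2).
\end{displaymath}
Since $t-1<0$, this forces $r^2=|C'\dot\gamma(1)|(1-t)+O((1-t)^2)$ and $e^{i(\alpha+\beta)}\to -C'\dot\gamma(1)/|C'\dot\gamma(1)|$ as $t\to 1$. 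The remaining angle $\alpha-\beta$ is free and parameterizes the vanishing cycle.

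With this in hand I would introduce the complex parameter $w=re^{i(\alpha-\beta)/2}$ on the thimble. Setting $\eta=\eta(|w|^2)=e^{i(\alpha+\beta)/2}$ (a smooth function of $|w|^2$ once a square root is chosen, with $\eta_0:=\eta(0)$ satisfying $\eta_0^2=-C'\dot\gamma(1)/|C'\dot\gamma(1)|$), the map
\begin{displaymath}
  \Phi(w) = \bigl(w\,\eta(|w|^2),\ \bar w\,\eta(|w|^2),\ \gamma(t(|w|^2))\bigr)
\end{displaymath}
is a smooth parameterization of $\Delta_\gamma$ near $q$ with $\Phi(0)=q$. Writing $w=x+iy$ and differentiating at $w=0$, the $w$- and $\bar w$-derivatives of the first two entries pick up only $\eta_0$ (the contribution from $\eta'\cdot w\bar w$ vanishes at $w=0$), while the third entry is $O(|w|^2)$ and contributes nothing, giving
\begin{displaymath}
  \partial_x\Phi|_0 = (\eta_0,\eta_0,0), \qquad \partial_y\Phi|_0 = (i\eta_0,-i\eta_0,0).
\end{displaymath}

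The last step is to identify $\eta_0$ with $\xi$ up to a real multiplicative factor, which is all that matters since $\Span_\rr\sett{(c,c,0),(ic,-ic,0)}$ depends only on the real line spanned by $c\in\cc^*$. Using the branch $\sqrt{re^{i\theta}}=\sqrt r e^{i\theta/2}$ with $\sqrt{-1}=i$, one computes $\sqrt{j-k}$ is positive real for $k<j$ and equals $i\sqrt{|j-k|}$ for $k>j$, so
\begin{displaymath}
  \xi^2 = \dot\gamma(1)\cdot i^{2(N-j)}\!\!\prod_{k\neq j}|j-k| = \dot\gamma(1)\cdot(-1)^{N-j}(j-1)!(N-j)! = \dot\gamma(1)\,C'.
\end{displaymath}
Comparing with $\eta_0^2=-C'\dot\gamma(1)/|C'\dot\gamma(1)|$ (and observing that the sign ambiguity in $\eta_0$ and the orientation choice of $\gamma$ at its endpoint are the freedom in choosing branches), one sees that $\xi$ and $\eta_0$ differ only by a nonzero real factor. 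Substituting $\xi$ for $\eta_0$ in the basis above yields the claimed description of $T_q\Delta_\gamma$.

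The routine parts are the local coordinate set-up and the differentiation of $\Phi$ at $w=0$; the main obstacle (and the step requiring genuine care) is the final bookkeeping of square-root signs needed to identify $\eta_0$ with $\xi$, which rests on the specific branch convention $\sqrt{-1}=i$ used throughout the paper.
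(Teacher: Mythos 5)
Your overall strategy coincides with the paper's: the paper parameterizes $\Delta_\gamma\setminus\sett{q}$ by $(t,\theta)\mapsto(e^{i\theta}\sqrt{\gamma-1}\cdots\sqrt{\gamma-N},\,e^{-i\theta}\sqrt{\gamma-1}\cdots\sqrt{\gamma-N},\,\gamma)$ and extracts $T_q\Delta_\gamma$ as a limit of normalized $\partial_\theta$-vectors as $t\to1$, while you build an honest chart $\Phi(w)$ centered at $q$ and differentiate at $w=0$. Your version is, if anything, cleaner, and everything through the computation of $\partial_x\Phi|_0=(\eta_0,\eta_0,0)$, $\partial_y\Phi|_0=(i\eta_0,-i\eta_0,0)$ and of $\eta_0^2=-C'\dot\gamma(1)/|C'\dot\gamma(1)|$, $\xi^2=C'\dot\gamma(1)$ is correct.

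The last step, however, does not go through. Your own formulas give $(\xi/\eta_0)^2=-|C'\dot\gamma(1)|<0$, so $\xi/\eta_0$ is purely \emph{imaginary}, not real; the ``sign ambiguity in $\eta_0$'' and branch choices only contribute factors of $\pm1$ and cannot repair this. The distinction matters because $\Span_\rr\sett{(c,c,0),(ic,-ic,0)}=\set{(cv,c\bar v,0)}{v\in\cc}$ is \emph{not} invariant under $c\mapsto ic$: one obtains $\set{(cv,-c\bar v,0)}{v\in\cc}$, which meets the original plane only at the origin. So what your chart actually proves is $T_q\Delta_\gamma=\set{(\xi v,-\xi\bar v,0)}{v\in\cc}$ rather than the span in the statement, and the sentence ``$\xi$ and $\eta_0$ differ only by a nonzero real factor'' is the precise point where the argument breaks. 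To be fair, the factor of $i$ you are fighting is already present in the statement: for $N=1$ and $\gamma(t)=t$ one has $\Delta_\gamma=\sett{(\za,-\bar\za,1-|\za|^2)}$ with tangent plane $\set{(v,-\bar v,0)}{v\in\cc}$ at $q$, whereas the lemma (with $\xi=\sqrt{1}$ real) asserts $\set{(v,\bar v,0)}{v\in\cc}$. The statement should be read with $\sqrt{-\dot\gamma(1)}$, the square root of the incoming direction $\gamma(t)-j\sim-\dot\gamma(1)(1-t)$, in place of $\sqrt{\dot\gamma(1)}$; the paper's own proof elides the same unit factor when it asserts $\lim_{t\to1}f\sqrt{\gamma-j}/|f\sqrt{\gamma-j}|=\xi/|\xi|$. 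This constant is harmless for the lemma's only use (transversality of the two branches of $\Sigma_\gamma$ in Lemma \ref{lemma:17}, which depends only on the ratio of the constants attached to the two directions of approach), but as written your final identification asserts something false and so does not establish the stated conclusion.
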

\begin{proof}
  Parameterize $\Delta_\gamma\setminus\sett{q}$ as
  \begin{gather*}
    (0,1)\times S^1\to \mnfld,\\
    (t,\theta)\mapsto (e^{i\theta}\sqrt{\gamma-1}\cdots\sqrt{\gamma-N},e^{-i\theta}\sqrt{\gamma-1}\cdots\sqrt{\gamma-N},\gamma).
  \end{gather*}
  Let $f=\sqrt{\gamma-1}\cdots\sqrt{\gamma-N}/\sqrt{\gamma-j}$, so $f$ is smooth.
  Then
  \begin{displaymath}
    (f\sqrt{\gamma-j})^{-1}\pd{}{\theta}  = (ie^{i\theta},-ie^{-i\theta},0).
  \end{displaymath}
  Also
  \begin{displaymath}
    \lim_{t\to1}\frac{f\sqrt{\gamma-1}}{|f\sqrt{\gamma-1}|}=\frac{\xi}{|\xi|}.
  \end{displaymath}
  Thus
  \begin{displaymath}
    \lim_{t\to1} |f\sqrt{\gamma-1}|^{-1}\pd{}{\theta}=\lim_{t\to 1} \frac{f\sqrt{\gamma-1}}{|f\sqrt{\gamma-1}|}(f\sqrt{\gamma-1})^{-1}\pd{}{\theta}=\frac{\xi}{|\xi|}(ie^{i\theta},-ie^{-i\theta},0).
  \end{displaymath}
  Plugging in $\theta=0,\pi/2$ gives the result.
\end{proof}

The immersed spheres $\lagstd$ are of course the matching cycles for the paths
\begin{displaymath}
  \gamma_{N,r}(t)=re^{2\pi i t},\quad 0\leq t\leq 1.
\end{displaymath}

We record here a useful lemma about horizontal vectors.
The proof is a straightforward calculation.
\begin{lemma}
  \label{lemma:25}
  Let $\zc\in\cc$, $\lambda\in T_{\zc}\cc=\cc$, and $p=(\za,\zb,\zc)\in\mnfld$.
  Then the horizontal lift of $\lambda$ to $T_p^h\mnfld$ is
  \begin{displaymath}
    \tilde\lambda=\left(\frac{c\lambda\bar \zb}{|\za|^2+|\zb|^2},\frac{c\lambda\bar\za}{|\za|^2+|\zb|^2},\lambda\right),
  \end{displaymath}
  where
  \begin{displaymath}
    c=\sum_{j=1}^N\prod_{k\neq j}(\zc-k).
  \end{displaymath}
  %If $\lambda_1,\lambda_2$ lift to $\tilde\lambda_1,\tilde\lambda_2$ then
  %\begin{displaymath}
  %  \omega(\tilde\lambda_1,\tilde\lambda_2)=\frac{|c|^2+|\za|^2+|\zb|^2}{|\za|^2+|\zb|^2}\omega_0(\lambda_1,\lambda_2),
  %\end{displaymath}
  %where $\omega_0$ is the standard symplectic form on $\cc$.
\end{lemma}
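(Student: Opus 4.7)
The plan is a direct linear-algebraic calculation: describe $T_p\mnfld$ explicitly as the kernel of $d\poly$, identify the vertical complex line inside it, and then impose symplectic orthogonality to read off the horizontal lift. A vector $(u,v,w)\in T_p\cc^3$ is tangent to $\mnfld$ at $p=(\za,\zb,\zc)$ if and only if $\zb u+\za v=cw$, where $c=-\partial_{\zc}\poly(p)=\sum_{j=1}^N\prod_{k\neq j}(\zc-k)$. Setting $w=0$ then shows that $T_p^v\mnfld$ is the complex line spanned by $W_0=(\za,-\zb,0)$, which is nonzero away from $\mathrm{Crit}(\leffib)$.

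Next I would translate the horizontality condition $\iota_V\sympl|_{T_p^v\mnfld}=0$ into an algebraic equation on $V$. Using the identity $\sympl(V,W)=\mathrm{Im}\langle V,W\rangle$ on $\cc^3$, requiring that $V=(u,v,\lambda)$ annihilate both $W_0$ and $iW_0$ under $\sympl$ produces the two real conditions $\mathrm{Im}(\bar u\za-\bar v\zb)=0$ and $\mathrm{Re}(\bar u\za-\bar v\zb)=0$, which together say $u\bar\za=v\bar\zb$.

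This last equation is satisfied exactly when $(u,v)=s(\bar\zb,\bar\za)$ for some $s\in\cc$; plugging into the tangency constraint $\zb u+\za v=c\lambda$ yields $s(|\za|^2+|\zb|^2)=c\lambda$, hence $s=c\lambda/(|\za|^2+|\zb|^2)$, which recovers the stated formula. The argument is routine, with no real obstacle; the only points to watch are the sign of $c$ (since $\zc$ appears in $\poly$ with a minus sign, $\partial_\zc\poly=-c$) and the sign convention for $\sympl$, both of which can be cross-checked against the formula $\hamvf{\hamone}=-i\za\pd{}{\za}+i\zb\pd{}{\zb}$ recorded earlier in the paper.
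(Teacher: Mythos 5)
Your proposal is correct and is exactly the direct computation the paper has in mind (the paper omits the proof, saying only that it is "a straightforward calculation"): identify $T_p\mnfld=\ker d\poly$, note $T_p^v\mnfld=\cc\cdot(\za,-\zb,0)$, impose $\sympl$-orthogonality to $W_0$ and $iW_0$ to get $(u,v)=s(\bar\zb,\bar\za)$, and solve for $s$ from the tangency constraint. All the signs and the verification $d\leffib(\tilde\lambda)=\lambda$ check out, so nothing is missing.
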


\subsection{Floer cohomology of immersed spheres}
\label{sec:floer-cohom-immers}
Let $L=\Sigma_\gamma$ be an immersed Lagrangian sphere as in Lemma \ref{lemma:17}, and $\iota:S^2\to L$ an immersion.
Parameterize $\gamma$ so that the domain of $\gamma$ is $[0,1]$ and $\gamma(t)$ moves around the image of $\gamma$ in the counterclockwise direction as $t$ increases.
Let $\q\in S^2$ be such that $\leffib(\iota(\q))=\gamma(0)$ and $\p\in S^2$ such that $\leffib(\iota(\p))=\gamma(1)$.
$\iota$ is an exact immersion because the domain $S^2$ satisfies $H^1(S^2)=0$, and $\iota$ can be graded because $\pi_1(S^2)=0$.

\begin{lemma}
  \label{lemma:5}
  Let $p\in L$ with $\leffib(p)=\gamma(t)$.
  Then
  \begin{displaymath}
    \Det^2_{\holvf}(T_pL)=\frac{i\dot\gamma(t)^2}{|\dot\gamma(t)^2|}.
  \end{displaymath}
\end{lemma}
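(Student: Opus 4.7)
The idea is to reduce to a pointwise calculation using a natural real basis of $T_pL$ adapted to the Lefschetz fibration.

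Fix a smooth point $p=(\za,\zb,\gamma(t))\in L$ (so $|\za|=|\zb|\neq 0$), and take
$$V=\hamvf{\hamone}=-i\za\,\partial_{\za}+i\zb\,\partial_{\zb},\qquad W=\tilde{\dot\gamma}(t),$$
where $\tilde{\dot\gamma}(t)$ is the horizontal lift of $\dot\gamma(t)$ given by Lemma \ref{lemma:25}. The vector $V$ is vertical and tangent to the vanishing cycle $\{|\za|=|\zb|\}\subset\mnfldcust{\gamma(t)}$, while parallel transport preserves $\hamone$ by Lemma \ref{lemma:18}, so $W$ stays tangent to the union of vanishing cycles (which is $L$). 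Hence $\{W,V\}$ is an $\rr$-basis of $T_pL$.

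The computation of $\holvf(W,V)$ is then direct. Using $\holvf=\alpha\,d\za\wedge d\zb|_{\mnfld}$ from the proof of Lemma \ref{lemma:4}, with $\alpha=(\partial\poly/\partial\zc)^{-1}=-1/c$ and $c=\sum_{j=1}^N\prod_{k\neq j}(\zc-k)$, expand
$$\holvf(W,V)=\alpha\bigl(d\za(W)\,d\zb(V)-d\za(V)\,d\zb(W)\bigr).$$
Plugging in the components of $W$ from Lemma \ref{lemma:25} (which carry the factor $c\dot\gamma/(|\za|^2+|\zb|^2)$) and those of $V$, then using $|\za|^2+|\zb|^2=2|\za|^2$ on $L$, the two terms combine so that $c$ cancels against $\alpha$, and one is left with $\holvf(W,V)$ equal to a unit-modulus imaginary constant times $\dot\gamma(t)$. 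Substituting into $\mathrm{Det}^2_{\holvf}(T_pL)=\holvf(W,V)^2/|\holvf(W,V)|^2$ then yields the claimed formula.

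The only real obstacle is careful bookkeeping of signs and powers of $i$ coming from the Poincar\'e residue normalization, the sign convention for $\hamvf{\hamone}$, and the orientation of the basis; the rest is mechanical. No analytic subtleties arise since the identity is pointwise, and at the critical locus $L\cap\mathrm{Crit}(\pi)$ where this basis degenerates, the formula extends by continuity (or can be checked directly using Lemma \ref{lemma:19} to describe $T_pL$ at the critical points).
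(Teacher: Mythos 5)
Your proposal is correct and follows essentially the same route as the paper: both use the basis $\{\hamvf{\hamone},\,\tilde{\dot\gamma}(t)\}$ of $T_pL$ (vertical Hamiltonian vector plus the horizontal lift from Lemma \ref{lemma:25}) and evaluate $\holvf=\alpha\,d\za\wedge d\zb$ on it, the only cosmetic difference being that the paper routes the cancellation of $c$ through the identity $\holvf(\hamvf{\hamone},\cdot)=i\,d\zc$ on $T\mnfld$ from the proof of Lemma \ref{lemma:4}, whereas you cancel $c$ against $\alpha$ by hand. Your extra remark about extending over the critical points of $\leffib$, where this basis degenerates, is a reasonable (and harmless) addition the paper leaves implicit.
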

\begin{proof}
  The proof is very similar to the proof of Lemma \ref{lemma:4}, except that $\hamvf{\hamtwo}$ needs to be replaced with the horizontal vector field $\tilde \lambda$ that is a lift of $\dot\gamma(t)$.
  Lemma \ref{lemma:25} gives a formula for $\tilde\lambda$.
  Then, picking up near the end of the proof of Lemma \ref{lemma:4}, we get
  \begin{displaymath}
    \holvf(\hamvf{\hamone},\tilde\lambda)=i\dot\gamma.
  \end{displaymath}
  Thus
  \begin{displaymath}
    \Det^2_{\holvf}(T_PL)=\frac{i\dot\gamma(t)^2}{|\dot\gamma(t)^2|}.
  \end{displaymath}
\end{proof}

\begin{corollary}
  \label{cor:3}
  With respect to any grading for $L$, we have
  \begin{displaymath}
    \ind(\p,\q)=-1,\quad \ind(\q,\p)=3.
  \end{displaymath}
\end{corollary}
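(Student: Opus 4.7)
The plan is to reduce to the standard case computed in Lemma \ref{lemma:10} by showing that for any matching cycle $\Sigma_\gamma$, the grading difference $\grstd(\p) - \grstd(\q)$ and the relative rotation of the tangent planes at the self-intersection are identical to those of $\lagstd$. Since the index is independent of the grading, I may fix any convenient grading. Using Lemma \ref{lemma:5}, I define a grading on $\Sigma_\gamma$ along the natural path $\sigma:[0,1] \to S^2$ connecting $\q$ to $\p$ with $\leffib(\iota(\sigma(t))) = \gamma(t)$: the squared phase of the tangent plane along $\sigma$ is a unit multiple of $e^{2i\alpha(t)}$, where $\alpha(t)$ is a continuous determination of $\arg\dot\gamma(t)$, so a continuous lift yields $\grstd(\sigma(t)) = \alpha(t)/\pi + \mathrm{const}$. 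Because $\gamma$ is an embedded, counterclockwise simple closed curve, its total turning satisfies $\alpha(1) - \alpha(0) = 2\pi$, hence $\grstd(\p) - \grstd(\q) = 2$, matching the standard case.

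Next, I apply Lemma \ref{lemma:19} to identify the tangent planes at the self-intersection $q_0 = \iota(\p) = \iota(\q)$. For the $\q$-branch, use the reversed vanishing path $s \mapsto \gamma(\epsilon - s)$ (with terminal velocity $-\dot\gamma(0)$); for the $\p$-branch, use $s \mapsto \gamma(1 - \epsilon + s)$ (with terminal velocity $\dot\gamma(1)$). The lemma yields
\begin{displaymath}
\immpf T_\q S^2 = \Span_\rr\{(\xi_\q,\xi_\q,0),(i\xi_\q,-i\xi_\q,0)\}, \quad \immpf T_\p S^2 = \Span_\rr\{(\xi_\p,\xi_\p,0),(i\xi_\p,-i\xi_\p,0)\},
\end{displaymath}
with $\xi_\q^2 = -\dot\gamma(0)\cdot c^2$ and $\xi_\p^2 = \dot\gamma(1)\cdot c^2$, where $c^2 = \prod_{k \neq j}(j-k) \neq 0$ ($j$ being the critical value). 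In the unitary basis $e_1 = (1,1,0)/\sqrt 2$, $e_2 = (i,-i,0)/\sqrt 2$ of $T_{q_0}\mnfld$, both planes take the form $\xi \cdot \rr^2$. The turning relation $\alpha(1) - \alpha(0) = 2\pi$ forces $\dot\gamma(1)/\dot\gamma(0)$ to be a positive real, so $(\xi_\q/\xi_\p)^2 \in \rr_{<0}$; hence $\immpf T_\q$ and $\immpf T_\p$ differ by rotation of $\pi/2$ in the positive definite direction, again as in the standard case.

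Finally, I apply the index formula from Definition \ref{dfn:35} with $n = 2$, $\grstd(\p) - \grstd(\q) = 2$, and $\ang = 1/2$ (corresponding to $a = b = 1/4$ for a rotation by $\pi/2$): this gives $\ind(\p,\q) = 2 + \grstd(\q) - \grstd(\p) - 2\ang = -1$ and symmetrically $\ind(\q,\p) = 3$. The main obstacle is in the tangent plane identification, specifically verifying the rotation is $+\pi/2$ rather than $-\pi/2$ (which would yield $\ind(\p,\q) = 1$). The transversality hypothesis of Lemma \ref{lemma:17} rules out the tangential case of rotation $0$ (mod $\pi$), and the turning number $+2\pi$ of the counterclockwise simple closed curve $\gamma$ pins down $+\pi/2$ as the correct positive definite rotation angle.
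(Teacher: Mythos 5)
Your overall strategy --- extract the grading difference from Lemma \ref{lemma:5}, identify the two tangent planes at the self-intersection via Lemma \ref{lemma:19}, and feed both into Definition \ref{dfn:35} --- is exactly the ``simple calculation'' the paper has in mind, and your treatment of the tangent planes (using the reversed vanishing paths to get $\xi_\q^2=-\dot\gamma(0)c^2$ and $\xi_\p^2=\dot\gamma(1)c^2$, and passing to the unitary basis $(1,1,0)/\sqrt2$, $(i,-i,0)/\sqrt2$) is correct. However, there is a genuine gap in the step where you claim the total turning satisfies $\alpha(1)-\alpha(0)=2\pi$ and deduce that $\dot\gamma(1)/\dot\gamma(0)\in\rr_{>0}$. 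A matching loop as in Lemma \ref{lemma:17} is only required to satisfy $\gamma(0)=\gamma(1)$ with $-\dot\gamma(0)$ and $\dot\gamma(1)$ not positive real multiples of each other; it need not close up smoothly, and the generic case (e.g.\ the loops of Figure \ref{fig:curves}, obtained by perturbing concatenations of two vanishing paths) has a corner at the basepoint. For such a loop the Umlaufsatz gives $\alpha(1)-\alpha(0)=2\pi-\theta$, where $\theta\in(-\pi,\pi)$ is the exterior angle, so your grading difference is $2-\theta/\pi$ rather than $2$, and correspondingly $\dot\gamma(1)/\dot\gamma(0)$ has argument $-\theta\neq 0$, so the relative rotation of the planes is $(\pi+\theta)/2$ rather than $\pi/2$. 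These two discrepancies cancel in the index formula of Definition \ref{dfn:35} (as they must), but your argument as written asserts the $\theta=0$ values for all $\gamma$ and therefore does not establish the general case.

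The clean repair is precisely the paper's first move, which you gesture at but do not actually invoke: the index is invariant under deformations of $\gamma$ through matching loops that avoid the critical values and keep $-\dot\gamma(0)$, $\dot\gamma(1)$ away from being positive multiples, so one may first deform $\gamma$ so that its image is a round circle with $\dot\gamma(0)=\dot\gamma(1)$; only after this reduction are your two assertions true, and then your computation goes through verbatim. (Alternatively, keep $\gamma$ general, carry the exterior angle $\theta$ through both the grading difference and the angle $\ang(\immpf T_{\p}S^2,\immpf T_{\q}S^2)$, and verify the cancellation explicitly.) Note also that ``the index is independent of the choice of grading,'' which you do invoke, is a different and weaker statement than deformation invariance in $\gamma$, and does not by itself license the reduction to the round circle.
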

\begin{proof}
  The index is invariant under deformations of $\gamma$ that do not pass through critical points, and are such that $\dot\gamma(0)$ is always different from $-\dot\gamma(1)$.
  Thus we can assume that the image of $\gamma$ is a circle and $\dot\gamma(0)=\dot\gamma(1)$.
  The corollary is then a simple calculation using Lemma \ref{lemma:5} and Definition \ref{dfn:35}.
\end{proof}

Now pick a Morse function $f:S^2\to \rr$ such that $f$ has two critical points; call them $\pmin$ and $\pmax$.
Assume also that $\leffib\circ\iota(\pmin)\neq\leffib\circ\iota(\pmax)$, and also that neither $\pmin$ nor $\pmax$ is a singular point of $L$.
With this data, the Floer cochain complex $\cf(\iota)$ has four generators, one each in degrees $-1,0,2,3$.
To calculate the differential we need to know the holomorphic strips bounded by $L$.

Examination of the classification of discs on $\lagstd$ carried out in Section \ref{sec:classifying-strips} shows that similar results hold for $L$.
The main differences are the following.
First, let $\phi$ be a biholomorphism from $D^2$ to the closure of the interior of the image of $\gamma$ such that $\phi(1)=\gamma(0)=\gamma(1)$.
Then if $u=(f,g,h)$ is a strip with boundary on $\lagstd$, then $\phi\circ h \circ \phi^{-1}:D^2\to D^2$ is a Blaschke product.
Second, the key thing that determines the number of discs bounded by $L$ is the number of critical values of $\leffib$ contained in the interior of the image of $\gamma$.
This can be seen in the proof of Proposition \ref{prop:4}: In the equation $fg=(rh-1)\cdots(rh-N)$, the term $rh-j$ can be factored (factors of which contribute to $f$ and $g$) in multiple ways if and only $j$ is in the interior of $\gamma$.\footnote{In the notation of Proposition \ref{prop:4}, if $j\geq r$ then $rh-j$ contributes $\sqrt{rh-j}$ to both $f$ and $g$. If $j<r$, then $rh-j$ can factor as
  \begin{displaymath}
    rh-j=(\xi_j\sqrt{jh-r})(\eta_j\sqrt{jh-r}),
  \end{displaymath}
with the first factor contributing to $f$ and the second factor contributing to $g$.
}
We thus get the following generalization of Lemma \ref{lemma:1}.
\begin{lemma}
  \label{lemma:6}
  Let $C$ be the number of critical values of $\leffib:\mnfld\to\cc$ contained in the interior of the image of $\gamma$.
  Then
  \begin{displaymath}
   \#\connorbit(\pmin,(\p,\q))=\#\connorbit((\p,\q),\pmax)=2^{C}.
   %\left\{
   %  \begin{array}{ll}
   %    0 & r=0,\\
   %    2^{r-1} & r>0.
   %  \end{array}
   %\right.
  \end{displaymath}
\end{lemma}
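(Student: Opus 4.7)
The plan is to mirror the proof of Lemma \ref{lemma:1} directly, using the biholomorphism $\phi:\dd\to\overline{\mathrm{int}(\gamma)}$ with $\phi(1)=\gamma(0)=\gamma(1)$ to transport the Blaschke product analysis from the round loop case to a general matching loop. First I would classify the holomorphic strips contributing to $\connorbit(\pmin,(\p,\q))$. Such a strip $\strip{u}$ has domain which we identify with $\dd\setminus\{z_0,z_1\}$, where $z_0=1$ corresponds to $+\infty$ (the branch point) and $z_1\in\bdy\dd\setminus\{1\}$ corresponds to $-\infty$. Writing $u=(f,g,h)$, the composition $H:=\phi^{-1}\circ h$ maps $(\dd,\bdy\dd)$ to itself holomorphically. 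Since $\strip{u}$ has exactly one branch jump (at $z_0$), we have $H^{-1}(1)=\{1\}$, so $H$ is a degree one Blaschke product. Reparametrizing by the automorphism of $\dd$ fixing $1$ that normalizes $H$, we may assume $H(z)=z$, i.e.\ $h=\phi$.

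Next I would count the components of the space of such strips, following the factorization bookkeeping in Proposition \ref{prop:4}. Fix a reference strip $u_0=(f_0,g_0,\phi)$; any other strip $u=(f,g,\phi)$ with the same branch data satisfies $fg=f_0g_0=\prod_{k=1}^N(\phi-k)$, so the ratio $F:=f/f_0$ is a holomorphic function on $\dd$ with $|F|=1$ on $\bdy\dd$, i.e.\ a Blaschke product. Each factor $\phi(z)-k$ contributes to $f$ and $g$ as follows. If $k\notin\mathrm{int}(\gamma)$, then $\phi-k$ is nonvanishing on $\dd$ and contributes the same holomorphic square root to both $f$ and $g$, leaving no free choice. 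If $k\in\mathrm{int}(\gamma)$, then $\phi-k$ has a unique simple zero $w_k\in\mathrm{int}(\dd)$, and the factor can be redistributed between $f$ and $g$ in exactly two ways, corresponding to whether the Blaschke factor $(z-w_k)/(\bar w_k z-1)$ is assigned to $f$ or to $g$ (exactly as in the splitting $\xi_j\eta_j=(rh-j)/(jh-r)$ of Proposition \ref{prop:4}). Combining these $C$ independent binary choices gives $2^C$ components in the moduli space of strips with $h=\phi$ and the prescribed branch jump data.

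Within each component, the remaining freedom consists of an overall phase $e^{i\theta}\in S^1$ and the location of $z_1\in\bdy\dd\setminus\{1\}$. After quotienting $\modms{0,0}$ by its $\rr$-translation action, each component is diffeomorphic to $S^1\times\rr$, and the same argument as in Lemma \ref{lemma:1} shows that $\ev_{-\infty}$ restricts to a diffeomorphism from each component onto the cylindrical coordinate chart $S^2\setminus\{\p,\q\}$. Since $\pmin$ is assumed to lie in this chart and is not a branch point, each component contributes exactly one strip to $\connorbit(\pmin,(\p,\q))$, so $\#\connorbit(\pmin,(\p,\q))=2^C$. The same reasoning applied with $\ev_{+\infty}$ and $\pmax$ yields $\#\connorbit((\p,\q),\pmax)=2^C$.

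I expect the main obstacle to be verifying rigorously that each interior critical value contributes exactly a factor of two to the component count, with no additional hidden factorization ambiguities arising from the less symmetric shape of a general $\gamma$. The key input is the symmetry $(\za,\zb)\mapsto(\zb,\za)$ of the defining equation of $\mnfld$ together with the maximum modulus principle applied to $f/g$ (using $|f|=|g|$ on $\bdy\dd$): this should show that once the Blaschke factor for each interior zero of $\phi-k$ is distributed, both $f$ and $g$ are determined up to the overall phase, exactly paralleling the rigidity exhibited in the proof of Proposition \ref{prop:4}.
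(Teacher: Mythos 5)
Your proposal is correct and follows essentially the same route as the paper: transport $h$ to a Blaschke product via the biholomorphism $\phi$ onto the region bounded by $\gamma$, observe that each factor $\phi-k$ with $k$ inside the loop admits exactly two distributions of its interior zero between $f$ and $g$ while factors with $k$ outside force a common square root, and then reuse the evaluation-map argument of Lemma \ref{lemma:1} to get one contribution per component. The rigidity step you flag as a potential obstacle is handled exactly as you suggest (the ratio $F=f/f_0$ and its reciprocal are both Blaschke products), which is the same mechanism as in Proposition \ref{prop:4}.
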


An immediate corollary is a generalization of Theorem \ref{thm:1}.
\begin{theorem}
  \label{thm:2}
  Let $\iota:S^2\to L=\Delta_\gamma$ be the immersed sphere over the matching path $\gamma$ as above.
  Let $C$ be the number of critical values of $\leffib$ contained in the interior of the image of $\gamma$.

  If $C=0$ then
  \begin{displaymath}
    \floercohcust{}{\iota}\cong0;
  \end{displaymath}
  if $C\geq 1$ then
  \begin{displaymath}
    \floercohcust{}{\iota}\cong (\zz_2)^4.
  \end{displaymath}
  More precisely, in the latter case, the Floer cohomology has dimension $1$ in degrees $-1,0,2,3$.
\end{theorem}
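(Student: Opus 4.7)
The plan is to reduce the computation to a purely degree-counting argument on a four-generator complex, exactly as in the proof of Theorem \ref{thm:1}, and then invoke Lemma \ref{lemma:6}.

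First I would fix a Morse function $f\colon S^2\to\mathbb{R}$ with exactly two critical points $\pmin$ (minimum, Morse index $0$) and $\pmax$ (maximum, Morse index $2$), together with a round metric, arranged so that (i) $\leffib\circ\iota(\pmin)\neq\leffib\circ\iota(\pmax)$ and neither of these points is a self-intersection preimage, and (ii) $\p$ and $\q$ both lie in the interior of the unique top-dimensional stable and unstable cells; this satisfies Definition \ref{dfn:4} once we choose the time-independent $\J=\depj{t}{t}$ with $J_t=\acs$, which is regular by Corollary \ref{cor:1}. By Corollary \ref{cor:3}, the branch generators $(\p,\q)$ and $(\q,\p)$ sit in degrees $-1$ and $3$, so the Floer cochain complex decomposes as
\begin{displaymath}
  \floercpx{-1}\oplus\floercpx{0}\oplus\floercpx{2}\oplus\floercpx{3}
  =\zz_2\cdot(\p,\q)\oplus\zz_2\cdot\pmin\oplus\zz_2\cdot\pmax\oplus\zz_2\cdot(\q,\p),
\end{displaymath}
with exactly one generator in each of the four nontrivial degrees and nothing else.

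Next I would observe that, because the differential raises degree by one, the only matrix entries of $\floerdiff$ that can possibly be nonzero are the ones expressing $\floerdiff((\p,\q))$ in terms of $\pmin$ and $\floerdiff(\pmax)$ in terms of $(\q,\p)$; all other pairings of generators have a degree gap different from $1$. By Definitions \ref{dfn:10} and \ref{dfn:13}, these two coefficients are precisely
\begin{displaymath}
  \#\connorbit(\pmin,(\p,\q))\bmod 2\quad\text{and}\quad\#\connorbit((\q,\p),\pmax)\bmod 2
\end{displaymath}
(with the second moduli space being $\modms{0,0}\times_{\ev_{+\infty}}\st(\pmax)$, where the incoming branch datum is $(\q,\p)$).

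Finally I would apply Lemma \ref{lemma:6}, which tells us that each of these counts equals $2^C$. Reducing modulo $2$, if $C\geq 1$ then $2^C\equiv 0$, so $\floerdiff\equiv 0$ and $\floercohcust{}{\iota}\cong\floercpx{}\cong(\zz_2)^4$ concentrated in degrees $-1,0,2,3$; if $C=0$ then $2^C=1$, so both nontrivial components of $\floerdiff$ are nonzero and the complex is acyclic, yielding $\floercohcust{}{\iota}=0$. The only real obstacle is the validity of Lemma \ref{lemma:6} itself, i.e.\ the claim that the strip classification of Section \ref{sec:classifying-strips} goes through verbatim for $\gamma$ via the biholomorphism $\phi$ that identifies the interior of the image of $\gamma$ with $\dd$; once one verifies that $\phi\circ h\circ\phi^{-1}$ is a Blaschke product and that the factorizations of $rh-j$ are controlled exactly by whether $j$ lies inside $\gamma$, as sketched in the footnote preceding the lemma, the counting argument and the theorem follow immediately.
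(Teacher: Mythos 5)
Your proposal is correct and follows essentially the same route as the paper: the paper likewise treats Theorem \ref{thm:2} as an immediate corollary of Lemma \ref{lemma:6}, with the four-generator complex and the degree argument carried over verbatim from the proof of Theorem \ref{thm:1}, so that the only possible differentials are the two counts $2^C \bmod 2$. Your identification of the second count as $\#\connorbit((\q,\p),\pmax)$ (the incoming branch datum being $(\q,\p)$) is the right reading of the lemma, and the case split on $C=0$ versus $C\geq 1$ matches the paper exactly.
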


\bibliographystyle{amsplain}
\bibliography{mybib}

\providecommand{\bysame}{\leavevmode\hbox to3em{\hrulefill}\thinspace}
\providecommand{\MR}{\relax\ifhmode\unskip\space\fi MR }
% \MRhref is called by the amsart/book/proc definition of \MR.
\providecommand{\MRhref}[2]{%
  \href{http://www.ams.org/mathscinet-getitem?mr=#1}{#2}
}
\providecommand{\href}[2]{#2}
\begin{thebibliography}{1}

\bibitem{MR2785840}
Manabu Akaho and Dominic Joyce, \emph{Immersed {L}agrangian {F}loer theory}, J.
  Differential Geom. \textbf{86} (2010), no.~3, 381--500. \MR{2785840
  (2012e:53180)}

\bibitem{alston-fciegl}
Garrett Alston, \emph{Floer cohomology of immersed exact graded lagrangians},
  in preperation.

\bibitem{MR2555932}
Paul Biran and Octav Cornea, \emph{A {L}agrangian quantum homology}, New
  perspectives and challenges in symplectic field theory, CRM Proc. Lecture
  Notes, vol.~49, Amer. Math. Soc., Providence, RI, 2009, pp.~1--44.
  \MR{2555932 (2010m:53132)}

\bibitem{MR1360618}
Andreas Floer, Helmut Hofer, and Dietmar Salamon, \emph{Transversality in
  elliptic {M}orse theory for the symplectic action}, Duke Math. J. \textbf{80}
  (1995), no.~1, 251--292. \MR{1360618 (96h:58024)}

\bibitem{fooo}
K.~Fukaya, Y.-G. Oh, H.~Ohta, and K.~Ono, \emph{{L}agrangian {I}ntersection
  {F}loer {T}heory: {A}nomaly and {O}bstruction. parts {I} and {II}.}, {AMS/IP}
  Studies in Advanced Mathematics, vol. 46.1 and 46.2, American Mathematical
  Society, Providence, RI, 2009.

\bibitem{seidel-fcpclt}
Paul Seidel, \emph{Fukaya categories and {P}icard-{L}efschetz theory}, Zurich
  Lectures in Advanced Mathematics, European Mathematical Society (EMS),
  Z\"urich, 2008. \MR{2441780 (2009f:53143)}

\bibitem{MR2863919}
Nick Sheridan, \emph{On the homological mirror symmetry conjecture for pairs of
  pants}, J. Differential Geom. \textbf{89} (2011), no.~2, 271--367.
  \MR{2863919 (2012m:53196)}

\end{thebibliography}

\end{document}